
\documentclass[twoside,12pt,leqno]{amsproc}
\usepackage{amssymb,latexsym,enumerate,verbatim,tikz}
\usepackage[pagebackref]{hyperref}
\usepackage{amsrefs,booktabs}  
\hypersetup{citecolor=purple, linkcolor=blue, colorlinks=true}
\numberwithin{table}{section}



\theoremstyle{plain}
\newtheorem{theorem}{Theorem}[section]
\newtheorem{lemma}[theorem]{Lemma}
\newtheorem{proposition}[theorem]{Proposition}
\newtheorem{corollary}[theorem]{Corollary}
\newtheorem{conjecture}[theorem]{Conjecture}
\theoremstyle{definition} 
\newtheorem{definition}[theorem]{Definition}
\newtheorem{remark}[theorem]{Remark}

\binoppenalty=10000\relpenalty=10000\hyphenpenalty=9200

\oddsidemargin 0pt
\evensidemargin 0pt
\textheight 8.1in \textwidth 6.3in

\setlength{\parskip}{5pt}
\renewcommand{\geq}{\geqslant}
\renewcommand{\leq}{\leqslant}
\renewcommand{\ge}{\geqslant}
\renewcommand{\le}{\leqslant}

\newcommand{\A}{\mathsf{A}}
\newcommand{\Alt}{\mathrm{Alt}}
\newcommand{\calE}{\mathcal{E}}

\newcommand{\calT}{\mathcal{T}}
\newcommand{\Char}{\mathrm{char}}
\newcommand{\D}{\mathsf{D}}
\newcommand{\eps}{\varepsilon}

\newcommand{\GL}{\mathrm{GL}}
\newcommand{\Magma}{\textsc{Magma}}
\newcommand{\Rev}{\mathrm{Rev}}

\newcommand{\Sy}{\mathsf{S}}
\newcommand{\Sym}{\mathrm{Sym}}
\newcommand{\Z}{\mathbb{Z}}


\makeatletter        
\def\@adminfootnotes{%
  \let\@makefnmark\relax  \let\@thefnmark\relax
  \ifx\@empty\@date\else \@footnotetext{\@setdate}\fi
  \ifx\@empty\@subjclass\else \@footnotetext{\@setsubjclass}\fi
  \ifx\@empty\@keywords\else \@footnotetext{\@setkeywords}\fi
  \ifx\@empty\thankses\else \@footnotetext{%
    \def\par{\let\par\@par}\@setthanks}%
  \fi}\makeatother   

\begin{document}

\newcommand{\lcm}{\mathrm{lcm}}

\hyphenation{}

\title[`Norman involutions' and tensor products]{`Norman involutions' and
  tensor products\\ of unipotent Jordan blocks}
\author{S.\,P. Glasby, Cheryl E. Praeger and Binzhou Xia}

\address[Glasby, Praeger, Xia]{
Centre for Mathematics of Symmetry and Computation\\
University of Western Australia\\
35 Stirling Highway\\
Crawley 6009, Australia.\newline  Email: {\tt Stephen.Glasby@uwa.edu.au; WWW: \href{http://www.maths.uwa.edu.au/~glasby/}{http://www.maths.uwa.edu.au/$\sim$glasby/}}\newline
Email: {\tt Cheryl.Praeger@uwa.edu.au; WWW: \href{http://www.maths.uwa.edu.au/~praeger}{http://www.maths.uwa.edu.au/$\sim$praeger} }
}
\address[Xia]{
Current address: School of Mathematics and Statistics, The University of Melbourne, Parkville, VIC 3010, Australia.\newline Email: {\tt BinzhouX@unimelb.edu.au; WWW: \href{http://ms.unimelb.edu.au/people/profile?id=1612}{https://ms.unimelb.edu.au/people/profile?id=1612}}
}

\date{\today}

\begin{abstract}
  This paper studies the Jordan canonical form (JCF) of the tensor product
  of two unipotent Jordan blocks over a field of
  prime characteristic~$p$. The JCF is characterized by a partition
  $\lambda = \lambda(r,s,p)$ depending on the dimensions $r$, $s$ of the
  Jordan blocks, and on~$p$. Equivalently, we study a permutation
  $\pi = \pi(r,s,p)$ of $\{1,2,\dots,r\}$  introduced by Norman.
  We show that 
  $\pi(r,s,p)$ is an involution involving reversals, or is the identity
  permutation. We prove that the group $G(r,p)$ generated by
  $\pi(r,s,p)$ for all~$s$, ``factors'' as a wreath product 
  corresponding to the factorisation $r=ab$ as a product of its
  $p'$-part~$a$ and $p$-part~$b$: precisely $G(r, p)=\Sy_a\wr \D_b$
  where $\Sy_a$ is a symmetric group of degree~$a$, and $\D_b$ is
  a dihedral group of degree~$b$. We also give simple necessary and sufficient
  conditions for $\pi(r,s,p)$ to be trivial.
\end{abstract}

\maketitle
\centerline{\noindent AMS Subject Classification (2010): 15A69, 15A21, 13C05}

\section{Introduction}

A good knowledge of the decompositions of tensor products of `Jordan blocks'
is key to understanding the actions of $p$-groups of matrices in characteristic $p$. Not surprisingly, the decompositions have applications to algebraic groups~\cites{rL,rL2}.
We explore certain permutations introduced by C. W. Norman~\cite{Norman1995}
in 1995, and groups they generate, to gain new insights into these decompositions.

Let $F$ be a field and let $r,s$ be positive integers with $r\le s$.
The nilpotent Jordan block $N_r$ of degree~$r$ is the $r\times r$ matrix
with 1 in entry $(i,i+1)$, for $1\le i<r$, and zeros elsewhere. Clearly
$N_r\otimes N_s$ is nilpotent of order at most $r$ as
$(N_r\otimes N_s)^r=(N_r)^r\otimes (N_s)^r=0$. It is not hard
to prove that $N_r\otimes N_s$ has order precisely $r$ and
is conjugate via a permutation matrix to
$(s-r+1)N_r\oplus\bigoplus_{i\ge1} 2N_{\mu_i}$, for certain non-negative integers
$\mu_i$. Furthermore, the partition
$(\mu_1,\mu_2,\dots)$ of $\frac12(rs-(s-r+1)r)=\frac12r(r-1)$ is independent of the
field~$F$. It follows that the Jordan canonical form
$\textup{JCF}(N_r\otimes N_s)$ of $N_r\otimes N_s$ equals
$(s-r+1)N_r\oplus\bigoplus_{i\ge1} 2N_{\mu_i}$.
We consider unipotent Jordan block matrices $J_\ell:=I_\ell+N_\ell$, where $I_\ell$
is the $\ell\times \ell$ identity matrix.
By contrast, we have
$\textup{JCF}(J_r\otimes J_s)=\bigoplus_{i\ge1} J_{\lambda_i}$ for some partition
$\lambda=(\lambda_1,\lambda_2,\dots)$ of $rs$ and here both the conjugating matrix
in $\GL_{rs}(F)$, and the partition $\lambda$, depend on~$F$.
The study of the properties of $\lambda$
includes~\cites{B,GPX2,GPX1,II2009,Norman1995,Renaud1979, Srinivasan1964} and
dates back to Aitken~\cite{A} in 1934. The parts of $\lambda$ are dimensions of
indecomposable modules in what is now called the Green ring~\cite{Green1962}.

The above partition $\lambda$ of $rs$ depends only on $r$, $s$, and $p:=\Char(F)$.
The case $p=0$ is completely understood, see \cite{Srinivasan1964}*{Corollary 1}.
We assume $p>0$, and we write
$\lambda$ as $\lambda(r,s,p)$. There exist complicated
algorithms~\cites{II2009,Renaud1979} for computing the parts of
$\lambda(r,s,p)$, and the dependence on the prime $p$ is particularly difficult.
In Section~\ref{sec3}, we consider some underlying combinatorics which is
\emph{independent} of $p$. Roughly we show that various partitions correspond
bijectively to subsets of $\{1,\dots,r-1\}$, and the proper subsets
correspond bijectively to certain involutions (constructed from reversals)
of the symmetric group $\Sy_r$ of degree~$r$.

In Section~\ref{sec:back} we define `Norman permutations' $\pi$ corresponding
to  partitions $\lambda$ in a certain subset of $2^{r-1}$ partitions of $rs$.
These permutations generalise, and were
motivated by, permutations defined by Norman
\cite{Norman1995}*{Equation (13), p.\;353} for partitions corresponding to
$\otimes$-products $J_r\otimes J_s$ over various fields of prime order $p$.  We note that there are far fewer
than $2^{r-1}$ permutations of the form $\pi(r,s,p)$ arising
from $\otimes$-products~\cite{GPX1}*{Table~4}.
Building on~\cite{B}, we give simplified necessary and sufficient
conditions, depending on $(r,s,p)$,
for $\pi(r,s,p)$ to be trivial (Theorem~\ref{newBarry-main}).
Section~\ref{sec1} gives `formulas' (Theorem~\ref{Prop2}(d)) for
the parts of $\lambda(r,s,p)$. This allows us to prove (Theorem~\ref{Prop2})
that the
permutations $\pi(r,s,p)$ correspond, as above, to subsets of $\{1,\dots,r-1\}$.
Section~\ref{s:triv} gives a proof of Theorem~\ref{newBarry-main}.
Finally, in Section~\ref{sec2} we show that the subgroups
$G(r,p)=\langle\pi(r,s,p)\mid r\le s\rangle$ of $\Sy_r$ are
imprimitive. Indeed we prove in Theorem~\ref{T:wreath}
that $G(r,p)=\Sy_a\wr \D_b$ is a wreath product
of a the symmetric group of degree~$a$, and the dihedral group of degree~$b$.
We discovered the essential idea for the proof by using
a \Magma\ computer programs available at~\cite{Gl}. This computational insight
allowed us to replace a number of difficult partial results with a relatively
short proof of Theorem~\ref{T:wreath}.

\section{The main results}\label{sec:back}

Fix integers $r,s$ where $1\le r\le s$.
In general, given $\alpha,\beta\in F$, the Jordan form of
$(\alpha I_r+N_r)\otimes(\beta I_s+N_s)$ is
$\bigoplus_{i\ge1} \alpha\beta I_{\lambda_i}+N_{\lambda_i}$ where
$\lambda=(\lambda_1,\lambda_2,\dots)$ is a partition of $rs$.
When $\alpha\beta=0$, $\lambda$ is easily determined,
see \cite{II2009}*{Proposition~2.1.2}, and if $\alpha\beta\ne0$, then $\lambda$
is independent of the choice of $\alpha,\beta\in F^\times$.
We henceforth take $\alpha=\beta=1$. In this case, $\lambda$
has precisely~$r$ nonzero parts which we write in order
$\lambda_1\ge\cdots\ge\lambda_r>0$. Clearly $\sum_{i=1}^r\lambda_i=rs$.
If $\Char(F)=0$ or  $\Char(F)\ge r+s-1$ then, 
by~\cite{Srinivasan1964}*{Corollary~1}, the parts of $\lambda$ satisfy

\begin{equation}\label{eq:std}
  \lambda_n=r+s+1-2n \qquad \mbox{for $n\in [r]:=\{1,2,\dots,r\}.$}
\end{equation}

Henceforth assume that $p:=\Char(F)>0$. In this case, there exist
algorithms~\cite{Renaud1979}*{Theorem~2},
but no `closed' formula, for the $n$th part $\lambda_n$. Although the dependence
on the prime~$p$ is complicated, there exist duality and
periodicity properties~\cite{GPX1}*{Theorem 4} for $\lambda(r,s,p)$
related to the smallest $p$-power satisfying $r\le p^m$.
In Section~\ref{sec1}, we show that
\begin{equation}\label{eq:formula}
  \lambda_n=r+s+1-2n+L(n)-R(n) \qquad \mbox{for $n\in [r]$}
\end{equation}
depends on `left' and `right' functions $L$ and $R$ (which depend on $p$).
Here $[r]=\{1,\dots,r\}$, for a positive integer $r$.
This `formula'
is based on the algorithm~\cite{II2009}*{Theorem 2.2.9}, and it is used to
show
\begin{align}
  &\lambda_i-\lambda_j\neq j-i\quad&&\text{for}\quad1\leq i<j\leq r,\textup{ and}\label{Eq2}\\
  &1-n\leq \lambda_n -s\leq r-n\quad&&\text{for}\quad n\in[r].\label{Eq3}
\end{align}

To suppress the dependency on the characteristic~$p$, we study partitions
$\lambda$ of $rs$ with $r$ nonzero parts
$\lambda_1\ge\cdots\ge \lambda_r>0$ satisfying~\eqref{Eq2} and~\eqref{Eq3}.
It may be that such a partition $\lambda$ does not equal $\lambda(r,s,p)$ for
\emph{any} prime $p$.
We show that these partitions correspond bijectively to subsets of $[r-1]$,
and that properties~\eqref{Eq2} and~\eqref{Eq3} guarantee that we can define
a bijection of $[r]$.

\begin{definition}\label{D:pi}
Given a partition $\lambda$ of $rs$ with $r$ nonzero parts
$\lambda_1\ge\cdots\ge \lambda_r>0$ satisfying~\eqref{Eq2} and~\eqref{Eq3},
define a permutation
$\pi_\lambda\colon[r]\to[r]$ by
\begin{equation}\label{eq4}
  n^{\pi_\lambda}=(r+1-n)+s-\lambda_n\qquad\textup{for $n\in\{1,\dots,r\}.$}
\end{equation}
We call $\pi_\lambda$ a \emph{Norman permutation} of $[r]$.
When $\lambda=\lambda(r,s,p)$, that is to say, when
$\textup{JCF}(J_r\otimes J_s)=\bigoplus_{i\ge1} J_{\lambda_i}$, we write
$\pi_\lambda=\pi(r,s,p)$.
\end{definition}

We prove in Theorem~\ref{Prop2} that $\pi_\lambda^2=1$, and
when $\pi_\lambda\ne1$ we call $\pi_\lambda$ a \emph{Norman involution}.
The map $\pi(r,s,p)$ was introduced by Norman~\cite{Norman1995}*{p.\,353},
and was shown in \cite{Norman1995}*{Lemma~3} to be a permutation~ of $[r]$.
In particular, it follows from Definition~\ref{D:pi} that
$\pi(r,s,p)$ is the identity if and only if
the corresponding Jordan partition
$\lambda(r,s,p) = (\lambda_1,\dots,\lambda_r)$ satisfies~\eqref{eq:std}.
This condition has been studied in several papers, and in
 \cite{GPX1} the partition $\lambda(r,s,p)$ was called
\emph{standard} if \eqref{eq:std} holds. We use this terminology here.
In 1964, Srinivasan \cite{Srinivasan1964} proved that $\lambda(r,s,p)$
is standard if $p\ge r+s-1$, and we extended her result in
\cite{GPX1}*{Theorem 2}.
Recently, Barry \cite{B} obtained a complete set of necessary
and sufficient conditions on $r, s, p$ for $\lambda(r,s,p)$
to be standard, and hence for $\pi(r,s,p)$ to be trivial.
We found Barry's conditions difficult to verify in some contexts,
and sought a simpler statement of them.
We establish in Theorem~\ref{newBarry-main} an equivalent criterion for
`standard-ness', that is, for \eqref{eq:std} to hold. It
involves only certain congruences on $r, s$ and $p$, and we
find it easier to apply.
We use the following convention: for any integer
$n$ and positive integer $\ell$, denote by $n_{\bmod \ell}$ the unique
integer in the interval $[0,\ell-1]$ which is congruent to $n$ modulo $\ell$.

\begin{definition}\label{stdtriple}
{\rm
Let $1\le r\le s$, let $p$ be a prime, and let $m=\lceil\log_p(r)\rceil$.
The triple $(r,s,p)$ is called \emph{standard} if and only if
the conditions in Table~\ref{tab3} apply, where
\[
  a:=r_{\bmod p^{m-1}},\; b:=s_{\bmod p^{m-1}},\;
  h:=\frac{p^{m-1}-1}{2}, \;
  i:=\left\lfloor\frac{r}{p^{m-1}}\right\rfloor,\;
  j:=\left\lfloor\frac{(s-r+1)_{\bmod p^m}}{p^{m-1}}\right\rfloor.
\]
\begin{center}
\begin{table}[ht!]\label{T:std}
\caption{The conditions for $(r, s, p)$ to be a standard triple.}\label{tab3}
\begin{tabular}{ccl}
  \toprule
$p$	&	$r$	&	Other conditions (if any)	\\ \midrule
any	&	$1$	&	--\\
any 	&	$\quad2\le r\leq p\qquad$&	$(s-r+1)_{\bmod p}\le p+2-2r$\\
$2$	&	$3$	&	$s\equiv 2\pmod{4}$	\\
odd 	&	$r>p$	&	$a-h\in\{0,1\}$, $b-h\in\{0,1\}$ and $2i+j\le p-1$\\ 	\bottomrule
\end{tabular}
\end{table}
\end{center}
}
\end{definition}

\begin{theorem}\label{newBarry-main}
Let $1\le r\le s$ and let $p$ be a prime. Then the following are equivalent:
\begin{enumerate}[{\rm (i)}]
  \item $\lambda(r,s,p)$ is a standard partition;
  \item $\pi(r,s,p)$ is the identity permutation;
  \item $(r,s,p)$ is a standard triple as in \textup{Definition~\ref{stdtriple}}.
\end{enumerate}
\end{theorem}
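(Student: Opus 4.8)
The plan is to prove the cyclic chain of implications (i) $\Rightarrow$ (ii) $\Rightarrow$ (iii) $\Rightarrow$ (i), exploiting the fact that (i) $\Leftrightarrow$ (ii) is essentially immediate from Definition~\ref{D:pi}. Indeed, by \eqref{eq4} the permutation $\pi(r,s,p)$ fixes $n$ precisely when $\lambda_n = r+s+1-2n$, so $\pi(r,s,p)$ is the identity if and only if \eqref{eq:std} holds for every $n\in[r]$, which is the definition of a standard partition. So the real content is the equivalence of the arithmetic condition (iii) with standard-ness. The strategy there is to \emph{not} re-prove standard-ness from scratch, but to invoke Barry's theorem in \cite{B}, which already gives a (more cumbersome) complete set of necessary and sufficient conditions on $(r,s,p)$, and then show purely arithmetically that Barry's conditions are equivalent to the conditions in Table~\ref{tab3}.

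First I would dispose of the easy rows of Table~\ref{tab3}. The cases $r=1$ (trivially standard, since $J_1\otimes J_s = J_s$), $2\le r\le p$ (here $m=1$ if $r\le p$, so the auxiliary quantities degenerate; the condition $(s-r+1)_{\bmod p}\le p+2-2r$ should match Srinivasan's bound $p\ge r+s-1$ after reduction mod~$p$, together with the periodicity in \cite{GPX1}*{Theorem 4}), and the sporadic $p=2$, $r=3$ row, can each be checked directly against the known classification. The substantive case is $p$ odd and $r>p$, equivalently $m=\lceil\log_p r\rceil\ge 2$; here I would set $q=p^{m-1}$ and translate Barry's description of when $\lambda(r,s,p)$ is standard into statements about the base-$p$ digits of $r$, $s$, and $s-r+1$. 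The definitions $a = r_{\bmod q}$, $b = s_{\bmod q}$, $i = \lfloor r/q\rfloor$, $j = \lfloor (s-r+1)_{\bmod p^m}/q\rfloor$ are exactly the ``leading digit / trailing block'' data in base $p$ relative to the critical power $q$, and $h=(q-1)/2$ is the midpoint of the digit range $[0,q-1]$. The conditions $a-h\in\{0,1\}$ and $b-h\in\{0,1\}$ say that the sub-$q$ parts of $r$ and $s$ are as close to the centre as possible (matching the ``small $r,s$ relative to $p^{m-1}$'' phenomenon that forces standard-ness inside a block), and $2i+j\le p-1$ controls the leading digits so that no ``carry'' disrupts standard-ness across the $p^m$-boundary. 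I would verify this equivalence by comparing, term by term, with the inequalities appearing in Barry's statement, using the reduction $r\mapsto r_{\bmod q}$ together with the periodicity/duality results of \cite{GPX1}*{Theorem 4} to handle the dependence on $s$ modulo $p^m$.

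For concreteness, after establishing (i) $\Leftrightarrow$ (ii), I would prove (ii) $\Rightarrow$ (iii) and (iii) $\Rightarrow$ (i) simultaneously by showing each of the four mutually exclusive, exhaustive cases of Table~\ref{tab3} is equivalent, \emph{within that case}, to Barry's criterion. The bookkeeping point is that the rows of Table~\ref{tab3} partition all pairs $(r,p)$ with $1\le r$: row 1 is $r=1$; row 2 is $2\le r\le p$; rows 3--4 cover $r>p$, split by the parity of~$p$ (with $p=2,r=3$ the only surviving case when $p=2$, since for $p=2$ the condition $r>p$ forces small $m$ and Barry's list collapses to $r=3$, $s\equiv 2\pmod 4$). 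So the theorem reduces to four arithmetic lemmas, one per row, each asserting ``Barry's condition $\iff$ the stated congruences.''

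The hard part will be the $p$ odd, $r>p$ case: matching the compact triple of conditions $a-h\in\{0,1\}$, $b-h\in\{0,1\}$, $2i+j\le p-1$ against Barry's (necessarily more intricate) recursive or digit-by-digit conditions, and in particular checking that the exceptional boundary behaviour when $r$ or $s$ is an exact multiple of $q=p^{m-1}$, or when $a$ or $b$ equals $h$ versus $h+1$, is handled correctly on both sides. A secondary subtlety is the interaction with the reduction $s\mapsto (s-r+1)_{\bmod p^m}$ in the definition of $j$: I would need the periodicity in $s$ (period $p^m$) from \cite{GPX1}*{Theorem 4} to know that Barry's condition only depends on $s$ through this residue, so that comparing the two criteria is even meaningful. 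Once the reduction to these four arithmetic lemmas is in place, each lemma is a finite-case or elementary-inequality check, and the main theorem follows by chaining (i) $\Rightarrow$ (ii) $\Rightarrow$ (iii) $\Rightarrow$ (i).
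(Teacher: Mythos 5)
Your proposal follows essentially the same route as the paper: (i)$\Leftrightarrow$(ii) is read off directly from Definition~\ref{D:pi} and \eqref{eq4}, and (i)$\Leftrightarrow$(iii) is obtained by quoting Barry's theorems and translating his conditions, row by row of Table~\ref{tab3}, into the congruences of Definition~\ref{stdtriple} (the paper's Lemmas~\ref{lem11}, \ref{lem12} and \ref{lem-cong} are precisely the arithmetic translation lemmas you defer). One small caution: for the row $2\le r\le p$ the correct target is Barry's exact criterion ($r\le(p+1)/2$ together with $(s-r)_{\bmod p}\le p+1-2r$ or $(s-r)_{\bmod p}=p-1$), not Srinivasan's merely sufficient bound reduced modulo $p$, which misses the case $(s-r)_{\bmod p}=p-1$.
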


In particular, this result generalises Srinivasan's theorem in
\cite{Srinivasan1964}*{Corollary 1}:
if $s\geq r\geq 2$ and $p\ge r+s-1$, then the conditions of
line~2 of Table~\ref{T:std} hold, so the triple $(r,s,p)$ is
standard and hence, by Theorem~\ref{newBarry-main}, the partition $\lambda(r,s,p)$
is standard.
A more detailed technical result (Proposition~\ref{newBarry}) is proved in
Section~\ref{s:triv}, which immediately implies
Theorem~\ref{newBarry-main}. In Section~\ref{sec1} we collect
results about Norman permutations.
In Section~\ref{sec2} we study certain groups (Definition~\ref{Def:G})
generated by Norman involutions.

For a set $\Omega$, denote by $\Sym(\Omega)$ the symmetric group
on $\Omega$ and $\Alt(\Omega)$ the alternating group on~$\Omega$.
The groups $\Sym([n])$ and $\Alt([n])$ are also written simply as
$\Sy_n$ and $\A_n$, and these permutation groups are said to
have \emph{degree}~$n$.

\begin{definition}\label{Def:G}
Given an integer $r\geq 2$ and a prime $p$, let $G(r,p)$
be the subgroup $G(r,p):=\langle\,\pi(r,s,p)\mid s\ge r\,\rangle$
of the symmetric group $\Sy_r$ of degree $r$.
\end{definition}

In  Section~\ref{sec2}, we determine precisely the structure of this group.
We denote by $\D_b$ the dihedral group  of degree~$b$; note that 
$|\D_b|=2b$ if $b\geq3$, and $\D_b=\Sy_b$ for $b=1,2$.

\begin{theorem}\label{T:wreath}
Let $p$ be a prime and $r$ a positive integer with 
$p'$-part $a$ and $p$-part $b$. Then
$G(r,p)\cong G(a,p)\wr G(b,p)\cong \Sy_a\wr\D_b.$
\end{theorem}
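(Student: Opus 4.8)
The plan is to prove Theorem~\ref{T:wreath} in two stages: first reduce to the prime-power case $r=p^k$ by a direct-product/wreath-product decomposition argument using the periodicity structure of $\lambda(r,s,p)$, and then handle $r=p^k$ directly. For the reduction, write $r=ab$ with $a=r_{p'}$ and $b=p^k$ the $p$-part. The key observation is that the index set $[r]$ carries a natural block system of $b$ blocks of size $a$ (or $a$ blocks of size $b$) coming from how the parts $\lambda_n$ of $\lambda(r,s,p)$ depend on $n$ modulo powers of $p$; this is governed by the duality/periodicity result \cite{GPX1}*{Theorem~4} already cited in the excerpt, and by the ``left'' and ``right'' function description \eqref{eq:formula}. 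The first thing I would do is show that every generator $\pi(r,s,p)$ preserves this block system, so $G(r,p)$ embeds in the wreath product $\Sy_a\wr\Sy_b$ acting imprimitively on $[r]$; this already recovers the imprimitivity claimed in the introduction.

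Next I would identify the image of $G(r,p)$ in the top group $\Sy_b$ and the base group $\Sy_a^{\,b}$. For the top group: varying $s$ over a complete set of residues relevant to the $p$-part, the induced permutation of the $b$ blocks should be exactly a Norman involution at level $p^k$, so the image is $G(b,p)=G(p^k,p)$; one must check this is $\D_b$, i.e.\ $\D_{p^k}$. For the base: fixing a block and varying $s$ within a congruence class that stabilises the block structure, the induced permutation on a block of size $a$ runs through the Norman permutations $\pi(a,s',p)$ for varying $s'$, giving $G(a,p)=\Sy_a$ (here using that $\gcd(a,p)=1$ puts us in the ``$p'$-part'' regime where, by \eqref{eq:formula} with trivial $p$-contribution, enough partitions $\lambda$ are realised to generate the full symmetric group — this is essentially the $p\mid\!\!\!/\, r$ case of the theorem). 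Combining, $G(r,p)$ contains $\Sy_a\wr\D_b$; together with the embedding from the first stage this forces equality. The wreath product is then $G(a,p)\wr G(b,p)$ as claimed.

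It remains to prove the prime-power case $G(p^k,p)\cong\D_{p^k}$. Here I would use the explicit formula \eqref{eq:formula}: when $r=p^k$, the ``left'' and ``right'' functions $L,R$ take a particularly constrained form (the carries in base $p$ when adding $n$-related quantities are highly restricted), so the only partitions $\lambda$ satisfying \eqref{Eq2}--\eqref{Eq3} that actually arise as $\lambda(p^k,s,p)$ correspond to a one-parameter family of subsets of $[p^k-1]$, and the resulting Norman involutions $\pi_\lambda$ are reversals of initial/terminal segments — explicitly the maps $n\mapsto c+1-n$ on $[1,c]$ (fixing $[c+1,p^k]$) for suitable $c$, plus their ``complementary'' reversals. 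These generate a dihedral group: a reversal of $[1,c]$ composed with a reversal of $[1,c']$ is a power of the long cycle, and one full reversal is the order-2 reflection, giving $\langle\text{rotation},\text{reflection}\rangle=\D_{p^k}$. I expect \textbf{this last step — pinning down exactly which Norman involutions occur when $r=p^k$ and recognising the group they generate as dihedral rather than something larger — to be the main obstacle}, since it requires controlling the $p$-adic carry behaviour in the algorithm \cite{II2009}*{Theorem~2.2.9} underlying \eqref{eq:formula}; this is presumably where the \Magma\ experiments \cite{Gl} provided the decisive structural hint.
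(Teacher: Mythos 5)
Your high-level architecture (imprimitivity with blocks governed by the $p$-part $b$, dihedral top group, symmetric base group) matches the paper's, but there is a genuine gap at the crucial step where you pass from ``the induced permutation on a block of size $a$ runs through the Norman permutations $\pi(a,s',p)$'' to ``$G(r,p)$ contains $\Sy_a\wr\D_b$.'' Every generator $\pi(r,s,p)$ acts on \emph{all} $b$ blocks simultaneously, so products of generators lying in the kernel of the block action give you, a priori, only a \emph{diagonally embedded} copy of $\Sy_a$ inside the base group $(\Sy_a)^b$ --- and this is exactly what the paper proves first (Lemma~\ref{thm4}: $G(r,p)\supseteq d(\Sy_a)$, via the explicit identities $\pi_0\pi_b=d(1,2,\dots,a)$ and $\pi_b\pi_{2b}=d(2,\dots,a)$ with $\pi_k=\pi(r,p^m+k,p)$). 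To obtain the full base group you need some element of $\ker\phi$ that acts \emph{differently} on different blocks, and nothing in your proposal produces one. The paper's solution is the computer-discovered identity $\pi_1\pi_0\pi_b\pi_{b+1}=(1,b+1)$ (Lemma~\ref{L9}), a transposition supported on a single block $\Omega_1$; conjugating it by the diagonal subgroup yields all of $\Sym(\Omega_1)$, and transitivity of the top group on blocks then gives $(\Sy_a)^b\le G(r,p)$. Without an analogue of this step your argument stalls at $d(\Sy_a)\rtimes\D_b$, which is much smaller than $\Sy_a\wr\D_b$.

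Two further points. First, your plan for the prime-power case ($G(p^k,p)\cong\D_{p^k}$ via controlling $p$-adic carries in the $L,R$ functions and classifying which partitions arise) is both harder than necessary and not quite correctly described: the elements that occur are not single initial-segment reversals $\Rev(1,c)$ fixing $[c+1,p^k]$ (such a map fixes too many points to be a reflection of the $p^k$-gon) but products such as $\Rev(1,c)\Rev(c+1,r)$, i.e.\ genuine reflections $n\mapsto c+1-n \pmod{r}$. The paper avoids the classification entirely via Lemma~\ref{lem8}: since the $p$-part of $r$ divides every part of $\lambda(r,s,p)$ (by \cite{GPX2}*{Theorem~2}), one gets $n^{\pi(r,s,p)}\equiv s+1-n\pmod{b}$ directly, so the induced action on $\Z/b\Z$ is always a reflection and these generate $\D_b$ as $s$ varies --- this simultaneously justifies your block system and computes the top group. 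Second, your claim that $G(a,p)=\Sy_a$ for $p\nmid a$ is itself one of the cases of the theorem and is not free; in the paper it is a consequence of the same explicit diagonal computation (Lemma~\ref{thm4} with $b=1$), so it cannot simply be quoted as known when setting up the base group.
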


\section{Subsets of \texorpdfstring{$[r-1]$}{}, \texorpdfstring{$r$}{}-tuples, and permutations of \texorpdfstring{$[r]$}{}}\label{sec3}

Fix a positive integer $r$.
In this section, we define bijections between three sets $\calT, \calE$ and
$\Pi$. The first set  $\calT$ is the set of subsets of $[r-1] = \{ 1,\dots,r-1\}$, so that
$|\calT|=2^{r-1}$. The second set $\calE$ comprises all
$r$-tuples $\eps=(\eps_1,\eps_2,\dots,\eps_r)\in\mathbb{Z}^r$
with $\eps_1\ge \eps_2\ge\cdots\ge \eps_r$ satisfying
\begin{align}
  &\eps_i-\eps_j\neq j-i\quad\text{for}\quad1\leq i<j\leq r,\textup{ and}\label{eq2}\\
  &1-n\leq \eps_n\leq r-n\quad\text{for}\quad1\leq n\leq r.\label{eq3}
\end{align}
We define $\Phi_{\calE\calT}:\calE \rightarrow \calT$ as follows. For $\eps\in\calE$ (so~\eqref{eq2}
and~\eqref{eq3} hold),
define a subset $T_\eps$ of $[r-1]$ by
\begin{equation}\label{eqTe}
  T_\eps=\eps^{\Phi_{\calE\calT}}=\{t\in[r-1]\mid \eps_t>\eps_{t+1}\}.
\end{equation}
Clearly $\Phi_{\calE\calT}$ is well defined.
For example, the constant $r$-tuple $\eps = (0,\dots,0)$ lies in $\calE$ and
$T_\eps = \emptyset$. We show in Theorem~\ref{prop3} that
$\Phi_{\calE\calT}$ is a bijection.
The third set $\Pi$ is a subset of~$\Sy_r$. Before defining it,
we first define `reversal
permutations' of $[r]$.

\begin{definition}\label{defRev}
For $1\le i\le j\le r$, the \emph{reversal permutation} $\Rev(i,j)$
of $\Sy_r$ is the identity if $i=j$, and otherwise $\Rev(i,j)$ 
sends the sequence $(i,i+1,\dots,j)$ to $(j,j-1,\dots,i)$
and fixes $[r]\setminus\{i,i+1,\dots,j\}$ pointwise.
\end{definition}

Note that $n^{\Rev(i,j)}$ equals $j + i - n$ for $i\leq n\leq j$, and
equals~$n$
otherwise. Thus $\Rev(i,j)$ has disjoint cycle decomposition
$\prod_{k=0}^{\lfloor(j-i-1)/2\rfloor}(i+k,j-k)$. Further,
$\Rev(i,j)$ is an involution if $i<j$, and is the identity if $i=j$.
If $j-i=2k$ is even, then the final term $(i+k,j-k)$ in the product is
a 1-cycle which we omit, e.g., $\Rev(1,5)=(1,5)(2,4)$.

Given a subset $T=\{t_1,\dots,t_{|T|}\}$ of
$[r-1]$, we assume $t_1<\cdots<t_{|T|}$, and we define $t_0:=0$, $t_{|T|+1}:=r$,
and write
\begin{equation}\label{eq6}
 \pi_T=\prod_{i=0}^{|T|}\Rev(t_i+1,t_{i+1}) \in\Sy_r.
\end{equation}
For example, $\pi_{[r-1]} = 1$ and $\pi_\emptyset  = \Rev(1,r)$.
We define the third set as $\Pi:=\left\{\pi_T\mid T\in\calT\right\}$,
and we define the function
\begin{equation}\label{eq6b}
  \Phi_{\calT\Pi}\colon \calT\to\Pi\quad\textup{by}\quad
  T^{\Phi_{\calT\Pi}}=\pi_T.
\end{equation}
By definition $\Phi_{\calT\Pi}$ is surjective. Further, distinct subsets $T$ and
$T'$ of $[r-1]$
clearly correspond to distinct permutations $\pi_T$ and $\pi_{T'}$. Thus
$\Phi_{\calT\Pi}$ is also injective, and hence bijective.
The set $\calE$ of $r$-tuples is somewhat mysterious.
For $\eps=(\eps_1,\dots,\eps_r)\in\calE$, we~define 
\begin{equation}\label{eq7}
  \pi_\eps\colon [r]\to[r]\quad\textup{by}\quad
  n^{\pi_\eps}=r+1-n-\eps_n\quad\text{for}\quad1\leq n\leq r.
\end{equation}
Condition~\eqref{eq3} implies that $[r]^{\pi_\eps}\subseteq[r]$, and condition~\eqref{eq2}
means that $\pi_\eps$ is injective, and hence bijective. Thus $\pi_\eps\in\Sy_r$.
We show in Lemma~\ref{lem:calF} that $\pi_\eps\in\Pi$, and hence
that 
\begin{equation}\label{eq7b}
\mbox{$\eps^{\Phi_{\calE\Pi}}=\pi_\eps$ \quad defines a map \quad $\calE\to\Pi$.}
\end{equation}
Anticipating our proofs that the maps we have defined are bijections, and writing, for example,
   $\Phi_{\Pi\calE}=\Phi_{\calE\Pi}^{-1}$ etc., we illustrate these maps in Figure~\ref{F1}.

\begin{figure}[ht!]
  \caption{Bijections between $\calE$, $\calT$, and $\Pi$ where
   $\Phi_{\Pi\calE}=\Phi_{\calE\Pi}^{-1}$ etc.}
\vskip2mm
  \label{F1}
\begin{tikzpicture}[xscale=0.8, yscale=0.8]
  \draw[line width=1pt,fill][->] (0,0) node[left,black] {$\calE$}--(0.5,0.866);
  \draw[line width=1pt,fill]     (0.5,0.866) node[left,black] {$\Phi_{\calE\calT}$} -- (1,1.732);
  \draw[line width=1pt,fill][->] (1,1.732) node[left,black] {$\calT$} -- (1.5,0.866);
  \draw[line width=1pt,fill]     (1.5,0.866) node[right,black] {$\Phi_{\calT\Pi}$}-- (2,0);
  \draw[line width=1pt,fill][->] (2,0) node[right,black] {$\Pi$} -- (1,0);
  \draw[line width=1pt,fill]     (1,0) node[below,black] {$\Phi_{\Pi\calE}$} -- (0,0);
  \draw[fill] (0,0) circle [radius=0.1];
  \draw[fill] (2,0) circle [radius=0.1];
  \draw[fill] (1,1.732) circle [radius=0.1];
\end{tikzpicture}
\hspace{30mm}
\begin{tikzpicture}[xscale=0.8, yscale=0.8]
  \draw[line width=1pt,fill] (0,0) node[left,black] {$\calE$}--(0.5,0.866);
  \draw[line width=1pt,fill][<-]     (0.5,0.866) node[left,black] {$\Phi_{\calT\calE}$} -- (1,1.732);
  \draw[line width=1pt,fill] (1,1.732) node[left,black] {$\calT$} -- (1.5,0.866);
  \draw[line width=1pt,fill][<-]     (1.5,0.866) node[right,black] {$\Phi_{\Pi\calT}$}-- (2,0);
  \draw[line width=1pt,fill] (2,0) node[right,black] {$\Pi$} -- (1,0);
  \draw[line width=1pt,fill][<-]     (1,0) node[below,black] {$\Phi_{\calE\Pi}$} -- (0,0);
  \draw[fill] (0,0) circle [radius=0.1];
  \draw[fill] (2,0) circle [radius=0.1];
  \draw[fill] (1,1.732) circle [radius=0.1];
\end{tikzpicture}
\end{figure}

\noindent
Note that each $T\in\calT$ defines a partition
$\{\Delta_0,\dots,\Delta_{|T|}\}$ of $[r]$ where
\begin{equation}\label{eq8}
  \Delta_i=[t_i+1,t_{i+1}]\cap\mathbb{Z}=\{t_i+1,t_i+2,\dots,t_{i+1}\}
  \qquad\text{for}\quad i=0,1,\dots,|T|.
\end{equation}
By definition, the permutation $\pi_T=T^{\Phi_{\calT\Pi}}$ reverses the elements
in each  interval $\Delta_i$.

\begin{lemma}\label{lem:calF}
For $\eps\in\calE$, the function $\pi_\eps$ defined by~\eqref{eq7} lies in $\Pi$.
\end{lemma}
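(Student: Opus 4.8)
The plan is to show that any $\eps \in \calE$ satisfies $\pi_\eps = \pi_{T_\eps}$, where $T_\eps = \{t \in [r-1] \mid \eps_t > \eps_{t+1}\}$ as in~\eqref{eqTe}; this immediately gives $\pi_\eps \in \Pi$ since $\pi_{T_\eps} \in \Pi$ by definition of~$\Pi$. First I would unwind the structure of $\eps$ in terms of the blocks $\Delta_0,\dots,\Delta_{|T_\eps|}$ from~\eqref{eq8}: by the definition of $T_\eps$, the sequence $(\eps_1,\dots,\eps_r)$ is constant on each interval $\Delta_i = \{t_i+1,\dots,t_{i+1}\}$, say equal to some value $c_i$, with $c_0 > c_1 > \cdots > c_{|T_\eps|}$ (strict drops exactly at the $t_i$). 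The key claim is then that within each block $\Delta_i$, formula~\eqref{eq7}, namely $n^{\pi_\eps} = r+1-n-c_i$, coincides with the reversal $\Rev(t_i+1,\,t_{i+1})$, which sends $n \mapsto t_i + t_{i+1} + 1 - n$. Comparing, this amounts to the identity $c_i = r - t_i - t_{i+1}$ for each $i$.

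To establish $c_i = r - t_i - t_{i+1}$ I would argue from the two constraints~\eqref{eq2},~\eqref{eq3} together with the fact that $\pi_\eps$ is a bijection of $[r]$. The cleanest route: the image $\Delta_i^{\pi_\eps}$ is the interval $\{r+1-t_{i+1}-c_i,\; r+1-t_{i+1}-c_i+1,\; \dots,\; r+1-(t_i+1)-c_i\} = \{r-t_{i+1}-c_i+1,\dots,r-t_i-c_i\}$, an interval of length $|\Delta_i| = t_{i+1}-t_i$ (note $\pi_\eps$ reverses order on each block since $n \mapsto r+1-n-c_i$ is decreasing). As $i$ runs over $0,\dots,|T_\eps|$, the right endpoints $r-t_i-c_i$ are strictly decreasing (since $t_i$ increases and $c_i$ decreases), so these image intervals are pairwise disjoint and, being $|T_\eps|+1$ intervals whose lengths sum to $r$, they must tile $[r]$ in the reverse order: $\Delta_0^{\pi_\eps}$ is the top interval $\{r-t_1+1,\dots,r\}$, then $\Delta_1^{\pi_\eps}$ sits just below it, and so on, with $\Delta_i^{\pi_\eps} = \{r+1-t_{i+1},\dots,r-t_i\}$. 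Matching the right endpoint $r - t_i - c_i = r - t_i$ forces $c_i = 0$ when... — wait, more carefully: matching $\Delta_i^{\pi_\eps} = \{r+1-t_{i+1},\dots,r-t_i\}$ against the computed $\{r-t_{i+1}-c_i+1,\dots,r-t_i-c_i\}$ gives $c_i = t_i + t_{i+1} - $ no: it gives $r - t_i - c_i = r - t_i$, i.e. reading endpoints the other way $r - t_{i+1} - c_i + 1 = r + 1 - t_{i+1}$, hence $c_i = 0$ only in the trivial case; the correct bookkeeping is that the blocks tile $[r]$ and I must track which image block lands where — let me instead just verify directly that $\pi_\eps$ agrees with $\Rev(t_i+1,t_{i+1})$ on $\Delta_i$ by checking that $\eps$ must equal the $r$-tuple $\eps'$ with $\eps'_n = r+1-n - n^{\Rev(t_i+1,t_{i+1})}$ on $\Delta_i$, i.e. $\eps'_n = t_i + t_{i+1} + 1 - n - (n) $...

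The honest and safest version of the argument is therefore: show that the $r$-tuple $\eps^{(T)}$ defined from any $T \in \calT$ by $n^{\pi_{\eps^{(T)}}} := n^{\pi_T}$, i.e. $\eps^{(T)}_n = r+1-n - n^{\pi_T}$, lies in $\calE$ and has $T_{\eps^{(T)}} = T$; and conversely that $\eps \mapsto \eps^{(T_\eps)}$ is the identity on $\calE$ because an $r$-tuple in $\calE$ is uniquely determined by its values on each block together with the constraints~\eqref{eq2} (which pin down the exact integer on each block once the block structure is known — this is exactly where~\eqref{eq2} is used, forcing consecutive blocks to "interlock" with no gap). I expect the main obstacle to be precisely this last uniqueness step: proving that~\eqref{eq2} and~\eqref{eq3}, together with constancy on blocks and strict decrease between blocks, force the constant $c_i$ on block $\Delta_i$ to be exactly $r - t_i - t_{i+1}$ (equivalently, that $\pi_\eps$ maps $\Delta_i$ onto the block $\{t_i+1,\dots,t_{i+1}\}$ reversed, with blocks fixed setwise). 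Concretely, $\eps_n \le r-n$ with equality somewhere in each block and $\eps_n \ge 1-n$ with equality somewhere, combined with the "no forbidden difference $j-i$" condition~\eqref{eq2} between the last element of $\Delta_i$ and the first of $\Delta_{i+1}$, is what prevents the value from jumping by more than the block-length — I would carry out this case analysis carefully, since it is the crux, and the identification $\pi_\eps = \pi_{T_\eps} \in \Pi$ then follows at once.
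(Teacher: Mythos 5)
Your opening reduction is the right one --- $\eps$ is constant, say equal to $c_i$, on each block $\Delta_i=\{t_i+1,\dots,t_{i+1}\}$ of $T_\eps$, and the whole lemma comes down to showing $c_i=r-t_i-t_{i+1}$, equivalently $\Delta_i^{\pi_\eps}=\Delta_i$ --- but you never actually prove this, and the one concrete argument you offer for it is wrong. You assert that the right endpoints $r-t_i-c_i$ of the image intervals are ``strictly decreasing (since $t_i$ increases and $c_i$ decreases)''; these two effects pull in opposite directions, so no monotonicity follows from them (e.g.\ $t_{i+1}=t_i+1$ and $c_{i+1}=c_i-5$ makes the endpoint \emph{increase}), and indeed with the correct values the right endpoint is $r-t_i-(r-t_i-t_{i+1})=t_{i+1}$, which is \emph{increasing} in $i$. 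The subsequent endpoint-matching collapses for exactly this reason, you acknowledge the confusion, and you end by deferring ``the crux'' to an unexecuted case analysis (your fallback sketch also leans on a false claim, that \eqref{eq3} holds with equality somewhere in each block). So the proposal identifies the statement to be proved but contains no proof of it.

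The missing ingredient is a single inequality that the paper uses: since $\eps_{t_{i+1}+1}\le\eps_{t_{i+1}}-1$, one gets $(t_{i+1}+1)^{\pi_\eps}\ge t_{i+1}^{\pi_\eps}$, i.e.\ the largest element of $\Delta_{i+1}^{\pi_\eps}$ is at least the smallest element of $\Delta_i^{\pi_\eps}$. Because $\pi_\eps$ is a bijection (which is where \eqref{eq2} and \eqref{eq3} enter, exactly once), the sets $\Delta_i^{\pi_\eps}$ are pairwise disjoint intervals, so this inequality forces $\Delta_0^{\pi_\eps}<\Delta_1^{\pi_\eps}<\cdots<\Delta_{|T|}^{\pi_\eps}$; as these intervals partition $[r]$ and satisfy $|\Delta_i^{\pi_\eps}|=|\Delta_i|$, they must coincide with the $\Delta_i$ in order, giving $\Delta_i^{\pi_\eps}=\Delta_i$ and hence $\pi_\eps|_{\Delta_i}=\Rev(t_i+1,t_{i+1})$. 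Without this (or an equivalent) step, the identification $\pi_\eps=\pi_{T_\eps}$ remains unproved.
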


\begin{proof}
Let $\eps\in\calE$, and let $\pi_\eps\in\Sy_r$ as defined in~\eqref{eq7}, and
$T=T_\eps$ as defined in~\eqref{eqTe}.
If $T=\{t_1,\dots,t_{|T|}\}$, then the function $n\mapsto \eps_n$ is constant
on the sets $\Delta_i$ given by~\eqref{eq8}. Moreover, different $\eps$-values
are taken on different~$\Delta_i$.

For subsets $\Delta$ and $\Delta'$ of $\mathbb{Z}$, write
$\Delta < \Delta'$ if $\delta < \delta'$ for each  $\delta\in\Delta$
and $\delta'\in\Delta'$.
Then the partition $\{\Delta_0,\Delta_1,\dots,\Delta_{|T|}\}$ of~$[r]$
satisfies $\Delta_0<\Delta_1<\cdots<\Delta_{|T|}$.
For $0\leq i\leq|T|$, the bijection
$\pi_\eps$ induces a bijection from $\Delta_i$ to $\Delta_i^{\pi_\eps}$. Thus
$|\Delta_i^{\pi_\eps}|=|\Delta_i|=t_{i+1}-t_i\ge1$. Clearly
$\{\Delta_0^{\pi_\eps}, \Delta_1^{\pi_\eps}, \dots, \Delta_{|T|}^{\pi_\eps}\}$ is a partition of $[r]$.
For $0\leq i\leq|T|$, the function $n\mapsto \eps_n$ is constant for $n\in\Delta_i$, and hence we deduce from~\eqref{eq7} that
\[
  \Delta_i^{\pi_\eps}=[t_{i+1}^{\pi_\eps}, (t_i+1)^{\pi_\eps} ]\cap\mathbb{Z}
  =\{t_{i+1}^{\pi_\eps}, \dots, (t_i+2)^{\pi_\eps}, (t_i+1)^{\pi_\eps}\}
\]
is a set of consecutive integers. Moreover, since $\eps_{t_{i+1}+1}\leq \eps_{t_{i+1}}-1$, we obtain
\begin{align*}
  (t_{i+1}+1)^{\pi_\eps} &= r+1-\eps_{t_{i+1}+1}-(t_{i+1}+1)\\
  &\geq r+1-(\eps_{t_{i+1}}-1)-(t_{i+1}+1) = t_{i+1}^{\pi_\eps}.
\end{align*}
As $\Delta_i^{\pi_\eps}$ and $\Delta_{i+1}^{\pi_\eps}$ are disjoint sets of consecutive integers,
we deduce that $\Delta_i^{\pi_\eps}< \Delta_{i+1}^{\pi_\eps}$ for each $ i\leq|T|$.
Thus $\{\Delta_0^{\pi_\eps}, \Delta_1^{\pi_\eps}, \dots, \Delta_{|T|}^{\pi_\eps}\}$ 
is a partition of $[r]$ with  $\Delta_0^{\pi_\eps} < \Delta_1^{\pi_\eps} < \cdots 
< \Delta_{|T|}^{\pi_\eps}$  and
$|\Delta_i^{\pi_\eps}|=|\Delta_i|$ for $i=0,1,\dots,|T|$. We conclude that
$\Delta_i^{\pi_\eps}=\Delta_i$ for $i=0,1,\dots,|T|$. This together with~\eqref{eq7}
implies that $\pi_\eps\vert_{\Delta_i}$ equals $\Rev(t_i+1,t_{i+1})$. Hence $\pi_\eps$ equals $T^{\Phi_{\calT\Pi}}$ by~\eqref{eq6},
and therefore $\pi_\eps\in\Pi$ as desired.
\end{proof}

\begin{theorem}\label{prop3}
The maps $\Phi_{\calE\calT}$, $\Phi_{\calT\Pi}$ and $\Phi_{\calE\Pi}$  defined
in $\eqref{eqTe}, \eqref{eq6b}$ and $\eqref{eq7b}$, are all bijections and
$\Phi_{\calE\calT}\circ \Phi_{\calT\Pi}\circ \Phi_{\Pi\calE}=1$ where
$\Phi_{\Pi\calE}=\Phi_{\calE\Pi}^{-1}$. Furthermore, using equations \eqref{eq6} and \eqref{eq7}, we have
\begin{enumerate}[{\rm (a)}]
  \item $\pi_T^2=1$ for all $T\in\calT$, and $\pi_T=1$ if and
  only if $T=[r-1]$; and
  \item  $\pi_\eps^2=1$ for all $\eps\in\calE$, and $\pi_\eps=1$ if and
  only if $\eps_n=r+1-2n$ for each $n\in[r]$.
\end{enumerate}
\end{theorem}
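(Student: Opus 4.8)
The plan is to route everything through the identity $\Phi_{\calE\Pi}=\Phi_{\calE\calT}\circ\Phi_{\calT\Pi}$, which is exactly what the proof of Lemma~\ref{lem:calF} establishes: there one sees $\pi_\eps=\pi_{T_\eps}=(T_\eps)^{\Phi_{\calT\Pi}}$, i.e.\ $\eps^{\Phi_{\calE\Pi}}=(\eps^{\Phi_{\calE\calT}})^{\Phi_{\calT\Pi}}$. Since $\Phi_{\calT\Pi}$ has already been shown to be a bijection, this lets us transfer properties between $\Phi_{\calE\Pi}$ and $\Phi_{\calE\calT}$. First, $\Phi_{\calE\Pi}$ is injective: by \eqref{eq7} one recovers $\eps$ from $\pi_\eps$ via $\eps_n=r+1-n-n^{\pi_\eps}$, so distinct tuples give distinct permutations. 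Then from $\Phi_{\calE\calT}=\Phi_{\calE\Pi}\circ\Phi_{\calT\Pi}^{-1}$ and the injectivity of $\Phi_{\calE\Pi}$, the map $\Phi_{\calE\calT}$ is injective too.

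It remains to prove $\Phi_{\calE\calT}$ is surjective; as $|\calT|=2^{r-1}$ this makes it a bijection, and then $\Phi_{\calE\Pi}=\Phi_{\calE\calT}\circ\Phi_{\calT\Pi}$ is a bijection, we may set $\Phi_{\Pi\calE}:=\Phi_{\calE\Pi}^{-1}$, and the triangle identity becomes the formal statement $\Phi_{\calE\calT}\circ\Phi_{\calT\Pi}\circ\Phi_{\Pi\calE}=\Phi_{\calE\Pi}\circ\Phi_{\calE\Pi}^{-1}=1$. Given $T=\{t_1<\dots<t_{|T|}\}\in\calT$ with $t_0:=0$ and $t_{|T|+1}:=r$, I would define $\eps$ to be constant on each block $\Delta_k$ of \eqref{eq8}, with value $\eps_n:=r-t_k-t_{k+1}$ for $n\in\Delta_k$, and check $\eps\in\calE$ with $\eps^{\Phi_{\calE\calT}}=T$. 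At a block boundary the step-wise difference is $(r-t_k-t_{k+1})-(r-t_{k+1}-t_{k+2})=t_{k+2}-t_k>0$, so $\eps$ is weakly decreasing with strict drops exactly at the elements of $T$, giving $T_\eps=T$. For \eqref{eq3}, if $n\in\Delta_k$ then $t_k+1\le n\le t_{k+1}$, and combining this with $0\le t_k$ and $t_{k+1}\le r$ yields $1-n\le\eps_n\le r-n$. For \eqref{eq2}, take $1\le a<b\le r$: if $a,b$ lie in one block then $\eps_a-\eps_b=0\ne b-a$; if $a\in\Delta_k$ and $b\in\Delta_l$ with $k<l$ then $\eps_a-\eps_b=t_l+t_{l+1}-t_k-t_{k+1}$ while $b-a\le t_{l+1}-t_k-1$, and the difference of these two bounds equals $t_l-t_{k+1}+1\ge1$ since $k+1\le l$ forces $t_{k+1}\le t_l$; hence $\eps_a-\eps_b>b-a$, so $\eps_a-\eps_b\ne b-a$. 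Thus $\eps\in\calE$ and $\Phi_{\calE\calT}$ is onto.

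For part (a), $\pi_T=\prod_i\Rev(t_i+1,t_{i+1})$ is a product of reversals supported on the pairwise disjoint intervals $\Delta_i$; these factors commute, and each is the identity or an involution (by the remarks after Definition~\ref{defRev}), so $\pi_T^2=1$. Moreover $\pi_T=1$ precisely when $\pi_T$ acts trivially on every $\Delta_i$, i.e.\ when every factor $\Rev(t_i+1,t_{i+1})$ is the identity, i.e.\ $t_{i+1}=t_i+1$ for all $i$; as the $\Delta_i$ partition $[r]$, this says $|T|+1=r$, equivalently $T=[r-1]$.

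For part (b), $\pi_\eps\in\Pi$ by Lemma~\ref{lem:calF}, so $\pi_\eps=\pi_T$ for some $T$ and $\pi_\eps^2=\pi_T^2=1$ by (a); and by \eqref{eq7}, $\pi_\eps=1$ if and only if $n^{\pi_\eps}=n$ for all $n$, that is $\eps_n=r+1-2n$ for each $n\in[r]$. The only genuine computation here is the surjectivity of $\Phi_{\calE\calT}$, and within it the verification of \eqref{eq2} for the tuple $\eps$ built from $T$; everything else is formal manipulation of the commuting triangle together with the already-established bijectivity of $\Phi_{\calT\Pi}$.
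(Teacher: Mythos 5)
Your proof is correct and follows essentially the same route as the paper's: both hinge on the factorisation $\Phi_{\calE\Pi}=\Phi_{\calE\calT}\circ\Phi_{\calT\Pi}$ extracted from Lemma~\ref{lem:calF}, an explicit block-constant preimage tuple, and the formal triangle identity, with parts (a) and (b) argued identically. The only organisational difference is that you anchor the direct surjectivity argument at $\Phi_{\calE\calT}$ and verify \eqref{eq2} and \eqref{eq3} by arithmetic with the $t_k$, whereas the paper anchors it at $\Phi_{\calE\Pi}$ and obtains \eqref{eq2} and \eqref{eq3} for free from the injectivity of $\pi$ and $[r]^{\pi}\subseteq[r]$ --- the tuple constructed is the same in both cases.
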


\begin{proof}
By the discussion after \eqref{eq6b}, $\Phi_{\calT\Pi}$ is a bijection.
Also, the map $\Phi_{\calE\Pi}\colon\calE\to\Pi$ with $\pi_\eps=\eps^{\Phi_{\calE\Pi}}$
as in~\eqref{eq7} is clearly injective.
To show that $\Phi_{\calE\Pi}$ is surjective, take an arbitrary
$\pi\in\Pi$ and define an $r$-tuple $\eps_\pi=(\eps_1,\dots,\eps_r)$ by
\begin{equation}\label{eq10}
  \eps_n=r+1-n- n^\pi \qquad\textup{for $n\in [r]$.}
\end{equation}
We claim that $\eps_\pi\in \calE$. Since $\Phi_{\calT\Pi}$ is a bijection, we have
$\pi=T^{\Phi_{\calT\Pi}}$ for some $T\in\calT$. Define
the corresponding  partition $\{\Delta_0,\Delta_1,\dots,\Delta_{|T|}\}$ of $[r]$
by~\eqref{eq8}, and recall (as noted after~\eqref{eq8}) that
$\pi\mid_{\Delta_i} = \Rev(t_i+1,t_{i+1})$ for each $i$.

If $n,n+1\in\Delta_i$, then $(n+1)^\pi=n^\pi-1$ and it follows from \eqref{eq10}
that $\eps_n=\eps_{n+1}$.
If $n, n+1$ are not in the same part, then for some $i$,
$n=t_i\in\Delta_{i-1}$ and $n+1=t_i+1\in\Delta_{i}$.
As we noted above we then have $t_i^\pi=t_{i-1}+1$
and $(t_i+1)^\pi = t_{i+1}$ and so, by~\eqref{eq10},
$\eps_n=r+1-t_i-t_{i-1}-1$, which is strictly larger than
$\eps_{n+1}=r+1-t_i-t_{i+1}-1$. (Indeed, $\eps_n\ge \eps_{n+1}+2$.)
This shows that $\eps_1\ge\cdots\ge \eps_r$ holds.
Since $\pi$ is injective,~\eqref{eq2} holds, and since
$[r]^\pi\subseteq[r]$,~\eqref{eq3} holds.
Thus $\eps\in \calE$ as claimed. Moreover it follows from \eqref{eq7}
and \eqref{eq10} that $\eps_\pi^{\Phi_{\calE\Pi}} = \pi$.  Hence $\Phi_{\calE\Pi}$
is bijective. Therefore the map $\Phi_{\Pi\calE}\colon\Pi\to\calE$ defined
by $\pi\mapsto \eps_\pi$ as in~\eqref{eq10} is well-defined, and it equals $\Phi_{\calE\Pi}^{-1}$.

Next we consider the assertions (a) and (b).  
Let $T=\{t_1,\dots,t_{|T|}\}$ be a subset of
$[r-1]$, with $t_1<\cdots<t_{|T|}$.
Then $\pi_T=T^{\Phi_{\calT\Pi}}$, defined by \eqref{eq6} with $t_0:=0$ and $t_{|T|+1}:=r$,  is a product of the reversal permutations
$\Rev(t_i+1,t_{i+1})$ for each $i$. Thus  $\pi_T^2=1$.
Moreover $\pi_T=1$
if and only if  $t_{i+1}=t_i+1$ for each $i$, and this holds if and only if $T=[r-1]$.
Similarly, by \eqref{eq7}, $\pi_\eps = 1$ if and only if
$\eps_n=r+1-2n$ for each $n$. Thus parts (a) and (b) are proved.
 Finally, it now follows that
$\eps^{\Phi_{\calE\calT}\circ \Phi_{\calT\Pi}\circ \Phi_{\Pi\calE}}=\eps$ for $\eps\in\calE$.
Hence, $\Phi_{\calE\calT}\circ \Phi_{\calT\Pi}\circ \Phi_{\Pi\calE}=1$, and in particular
$\Phi_{\calE\calT}$ is a bijection.
This completes the proof.
\end{proof}

\begin{remark}
Fix a subset $T=\{t_1,\dots,t_{|T|}\}$ of $[r-1]$, with $t_1<\cdots<t_{|T|}$.
Set $t_0:=0$ and $t_{|T|+1}:=r$. It was shown in the proof
of Theorem~\ref{prop3}
that $\eps=T^{\Phi_{\calT\calE}}$ equals $(\eps_1,\dots,\eps_r)$ where
$\eps_n=r+1-t_i-t_{i+1}$ for $n\in\Delta_i=[t_i+1,t_{i+1}]\cap\Z$.
Thus the $\eps$-values are constant on $\Delta_i$ and on $\Delta_{i+1}$,
and the value on $\Delta_{i+1}$ is at least $2$ less than the value
on $\Delta_{i}$.
\end{remark}

\section{Norman permutations}\label{sec1}

We use the results of the previous section
to give formulas for $\pi(r,s,p)$ and to show that $\pi(r,s,p)^2=1$.


For $n\in[r]$, define the integer $D_n(r,s)$ to be the determinant of the
$n\times n$ matrix with $(i,j)$-entry $\binom{s+r-2n}{s-n+i-j}$,
where $0\le i,j\le n-1$. It was shown in \cite{Robert1994} that
\begin{equation}\label{eq1}
  D_n(r,s)
  =\prod\limits_{i=0}^{n-1}\frac{\binom{s+r-2n+i}{s-n}}{\binom{s-n+i}{s-n}},
  \qquad\textup{for $n\in[r]$.}
\end{equation}
Set $D_0(r,s)=1$, and note that $D_r(r,s)=1$. For $n=0,1,\dots,r$, define
\begin{equation*}
\delta_n=\delta_n(r,s,p)=
\begin{cases}
0\quad&\text{if $D_n(r,s)\equiv0\pmod{p}$,}\\
1\quad&\text{if $D_n(r,s)\not\equiv0\pmod{p}$.}
\end{cases}
\end{equation*}
For $n\in[r]$, let $L(n)$ be the smallest positive integer such
that $\delta_{n-L(n)}=1$ and $R(n)$ be the least nonnegative integer
such that $\delta_{n+R(n)}=1$. Note that $L(n)$ and $R(n)$ are well defined since
$\delta_0=1$ and $\delta_r=1$, and in particular $L(1)=1$.
Moreover, $1\le L(n)\le n$ and $0\le R(n)\le r-n$ hold, for $n\in[r]$.
Table~\ref{tab2} illustrates how these functions determine the values of
$\delta_k$ as $k$ varies.

\begin{center}
\begin{table}[ht!]
\caption{The functions $L$ and $R$ determine when $\delta_k$ is 0 or 1.}\label{tab2}
\begin{tabular}{cccccccccc} \toprule
$k$&$n-L(n)$&$n-L(n)+1$&$\cdots$&$n-1$&$n$&$n+1$&$\cdots$&$n+R(n)-1$&$n+R(n)$\\ \midrule
$\delta_k$&1&0&$\cdots$&0&$\delta_n$&$0$&$\cdots$&0&1\\ \bottomrule
\end{tabular}
\end{table}
\end{center}

For a Jordan partition $\lambda = \lambda(r,s,p) = (\lambda_1,\dots,\lambda_r)$,
define
\begin{equation}\label{eqelambda}
\eps = \eps(r,s,p) = (\lambda_1-s, \dots, \lambda_r - s).
\end{equation}
This $r$-tuple is the \emph{deviation vector} for $\lambda$
defined and studied in \cite{GPX1}*{Definition 1(c)}. Let 
\begin{equation}\label{eq5}
T=\{i\in[r-1]\mid\delta_i=1\}.
\end{equation}
Then the map $\delta: i\mapsto \delta_i$ on $[r]\cup\{0\}$ is the characteristic function for $T\cup\{0,r\}$.

\begin{theorem}\label{Prop2}
Given integers $r$ and $s$ with $1\leq r\leq s$ and a prime number $p$, let
$\lambda=\lambda(r,s,p)=(\lambda_1,\dots,\lambda_r)$, $\eps$ be as in $\eqref{eqelambda}$,
and $T$ be as in $\eqref{eq5}$. Then
\begin{enumerate}[{\rm (a)}]
  \item  $\eps=T^{\Phi_{\calT\calE}}\in\calE$;
  \item  if $T=\{t_1,\dots,t_{|T|}\}$ and $0=t_0<t_1<\cdots<t_{|T|}<t_{|T|+1}=r$,
then
  \[
    \pi(r,s,p)=T^{\Phi_{\calT\Pi}}=\prod_{i=0}^{|T|}\Rev(t_i+1,t_{i+1});
  \]
  \item  $n^{\pi(r,s,p)}=n+1-L(n)+R(n)$, for each $n\in[r]$;
  \item  $\lambda_n=r+s-2n+L(n)-R(n)$, for each $n\in[r]$.
\end{enumerate}
\end{theorem}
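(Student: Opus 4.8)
The plan is to deduce everything from the combinatorial machinery of Section~\ref{sec3} together with a single known fact about the Jordan partition $\lambda(r,s,p)$, namely the formula for $\lambda_n$ in terms of the functions $L$ and $R$. First I would establish part~(d) directly from the algorithm cited in the excerpt (\cite{II2009}*{Theorem~2.2.9}), which expresses $\lambda_n$ via the determinants $D_n(r,s)$ and their $p$-adic valuations. Concretely, the $n$th part counts the number of indices $k$ with $\delta_k(r,s,p)$-behaviour consistent with the $n$th Jordan block, and a telescoping/counting argument over the pattern in Table~\ref{tab2} converts this into $\lambda_n = r+s-2n+L(n)-R(n)$. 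This is the part I expect to carry the real analytic weight, since it is where the number-theoretic input (Kummer-type congruences for the binomial determinants $D_n$) enters; everything afterwards is formal.

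Granting~(d), part~(c) is immediate: substituting $\lambda_n = r+s-2n+L(n)-R(n)$ into the defining equation~\eqref{eq4} for $\pi(r,s,p)=\pi_\lambda$ gives
\[
 n^{\pi(r,s,p)} = (r+1-n)+s-\lambda_n = (r+1-n)+s-(r+s-2n+L(n)-R(n)) = n+1-L(n)+R(n).
\]
For part~(a), I would check that the deviation vector $\eps=(\lambda_1-s,\dots,\lambda_r-s)$ defined in~\eqref{eqelambda} lies in $\calE$ and satisfies $\eps = T^{\Phi_{\calT\calE}}$ for the set $T$ of~\eqref{eq5}. That $\eps$ satisfies \eqref{eq2} and \eqref{eq3} is exactly \eqref{Eq2} and \eqref{Eq3} rewritten via $\eps_n=\lambda_n-s$, and these follow from~(d): indeed $\eps_n = r-2n+L(n)-R(n)$, and the bounds $1\le L(n)\le n$, $0\le R(n)\le r-n$ give $1-n\le\eps_n\le r-n$, while the monotonicity $\eps_1\ge\cdots\ge\eps_r$ and the ``gap'' inequality \eqref{eq2} follow from the structure of $L,R$ dictated by Table~\ref{tab2}. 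To identify $\eps$ with $T^{\Phi_{\calT\calE}}$, recall from the Remark after Theorem~\ref{prop3} that $T^{\Phi_{\calT\calE}}$ has $n$th entry $r+1-t_i-t_{i+1}$ for $n\in\Delta_i=[t_i+1,t_{i+1}]\cap\Z$, where $T=\{t_1,\dots,t_{|T|}\}$; on the other hand, for $n\in\Delta_i$ one has $\delta_{t_i}=\delta_{t_{i+1}}=1$ and $\delta_k=0$ for $t_i<k<t_{i+1}$, so $L(n)=n-t_i$ and $R(n)=t_{i+1}-n$, whence $\eps_n = r-2n+(n-t_i)-(t_{i+1}-n) = r-t_i-t_{i+1}$. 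Since the two $n$th entries differ by exactly $1$, a reindexing (the convention in~\eqref{eq4} uses $(r+1-n)$ while \eqref{eq7} uses $r+1-n$) must be reconciled — I would absorb this by noting $\eps(r,s,p)$ as defined in~\eqref{eqelambda} matches the $\calE$-normalisation up to this bookkeeping, so that~(a) holds on the nose.

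Finally, part~(b) follows by combining~(a) with the commuting-triangle statement of Theorem~\ref{prop3}: since $\eps = T^{\Phi_{\calT\calE}}$, applying $\Phi_{\calE\Pi}$ and using $\Phi_{\calE\calT}\circ\Phi_{\calT\Pi}\circ\Phi_{\Pi\calE}=1$ (equivalently $\Phi_{\calE\Pi}=\Phi_{\calE\calT}\circ\Phi_{\calT\Pi}$) gives $\pi_\eps = \eps^{\Phi_{\calE\Pi}} = T^{\Phi_{\calT\Pi}} = \prod_{i=0}^{|T|}\Rev(t_i+1,t_{i+1})$. It remains to confirm $\pi_\eps = \pi(r,s,p)$, i.e.\ that the permutation built from the deviation vector via~\eqref{eq7} coincides with the Norman permutation $\pi_\lambda$ of Definition~\ref{D:pi}; this is a direct comparison of~\eqref{eq7} and~\eqref{eq4} using $\eps_n=\lambda_n-s$. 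As a by-product we recover $\pi(r,s,p)^2=1$ from Theorem~\ref{prop3}(a). The main obstacle is, as noted, part~(d): translating the determinantal algorithm into the clean closed form involves a careful bookkeeping of which $\delta_k$ vanish, and it is the only step that is not purely formal.
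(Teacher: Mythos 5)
Your proposal is correct in substance, but it runs the paper's argument in the opposite direction. The paper proves (a) first --- it sets $e=T^{\Phi_{\calT\Pi}\Phi_{\Pi\calE}}$, computes $e_n$ from the reversal structure, and matches it against the recursion $\lambda_n=r+s-2n+L(n)$ if $\delta_n=1$, $\lambda_n=\lambda_{n+1}$ if $\delta_n=0$ (from \cite{II2009}*{Theorem 2.2.9}); it then deduces (b), reads off (c) from the reversal form, and only at the very end obtains the closed formula (d) from Definition~\ref{D:pi} and (c). You instead telescope that same recursion directly into the closed formula (d) (using $L(n')=L(n)+R(n)$ where $n'=n+R(n)$ is the next index with $\delta_{n'}=1$), and then derive (c), (a), (b) formally. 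Both routes rest on exactly the same external input and the same machinery from Theorem~\ref{prop3}, so this is a reordering rather than a new idea; your ordering arguably isolates the number-theoretic content more cleanly in (d), while the paper's ordering lets it quote (b) verbatim when proving (c) without re-deriving which $\Rev(t_i+1,t_{i+1})$ contains $n$.

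One point needs repair. In part (a) you find that your formula gives $\eps_n=r-t_i-t_{i+1}$ on $\Delta_i$ whereas the Remark after Theorem~\ref{prop3} states $r+1-t_i-t_{i+1}$, and you propose to ``absorb'' the difference as a normalisation issue. You cannot: two integer vectors differing by $1$ in every entry are not equal, and \eqref{eq4} and \eqref{eq7} use identical conventions (substituting $\eps_n=\lambda_n-s$ into \eqref{eq4} gives \eqref{eq7} exactly). The actual resolution is that the Remark contains an off-by-one slip: computing from \eqref{eq10} with $n^{\pi_T}=t_i+t_{i+1}+1-n$ for $n\in\Delta_i$ gives $\eps_n=r+1-n-(t_i+t_{i+1}+1-n)=r-t_i-t_{i+1}$, which is precisely your value (and is what the displayed computation inside the proof of Theorem~\ref{prop3} actually yields). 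So your identification $\eps=T^{\Phi_{\calT\calE}}$ holds on the nose with no reconciliation needed; you should replace the hand-wave by this direct computation.
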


\begin{proof}
Let $T=\{t_1,\dots,t_{|T|}\}$ with $0=t_0<t_1<\cdots<t_{|T|}<t_{|T|+1}=r$. 
Write $f=T^{\Phi_{\calT\Pi}}$ and $e=f^{\Phi_{\Pi\calE}}=(e_1,e_2,\dots,e_r)$,
so that $e=T^{\Phi_{\calT\calE}}$, by Theorem~\ref{prop3}.  We shall prove that $e=\eps$.
By \eqref{eq6b},
\begin{equation}\label{eq3.2}
  f=\prod_{i=0}^{|T|}\Rev(t_i+1,t_{i+1}).
\end{equation}
Let $n\in[r]$. If $\delta_n=1$, then by \eqref{eq5}, $n=t_{i+1}$ for some $i$ with $0\leq i\leq|T|$, 
and so by~\eqref{eq3.2} and the definition of $L$,
\begin{equation}\label{eq3.5}
  n^f=t_i+1=n+1-(t_{i+1}-t_i)=n+1-L(n).
\end{equation}
If $\delta_n=0$, then $n\neq t_{i+1}$ for all $i$ with $0\leq i\leq|T|$, and so by~\eqref{eq3.2}, 
$n^f=(n+1)^f+1$. Therefore, by~\eqref{eq10} and~\eqref{eq3.5},
$e_{n+1} = r+1 - (n+1) - (n+1)^f= r-n-(n+1)^f$, and 
\begin{align*}
e_n=r+1-n-n^f
  &=\begin{cases}
  r-2n+L(n)\quad&\textup{if $\delta_n=1$,}\\
  r-n-(n+1)^f&\textup{if $\delta_n=0$,}\\
  \end{cases}\\
  &=\begin{cases}
  r-2n+L(n)\hskip6mm&\textup{if $\delta_n=1$,}\\
  e_{n+1}&\textup{if $\delta_n=0$}.\\
  \end{cases}
\end{align*}
According to \cite{II2009}*{Theorem 2.2.9} or \cite{GPX1}*{Theorem~15}, we have
\[
\lambda_n=
  \begin{cases}
  r+s-2n+L(n)\quad&\textup{if $\delta_n=1$,}\\
  \lambda_{n+1}&\textup{if $\delta_n=0$}.\\
  \end{cases}\\
\]
By \eqref{eqelambda}, $\eps_n=\lambda_n-s$ for all $n$. Thus if $\delta_n=1$ then $\eps_n=r-2n+L(n)=e_n$.
Suppose that   $\delta_n=0$. Then $\eps_n= \lambda_{n+1}-s = \eps_{n+1}$ and $e_n=e_{n+1}$; 
if $\delta_{n+1}=1$ then, as we just showed, $\eps_{n+1}=e_{n+1}$ and so $\eps_n=e_n$, while if 
$\delta_{n+1}=0$, then $\eps_n=\eps_{n+1}=\eps_{n+2}$ and $e_n=e_{n+1}=e_{n+2}$; recursively we conclude  
(recalling that $\delta_r=1$) that also $\eps_n=e_n$ if $\delta_{n+1}=0$. Thus $\eps=e$, and part (a) is proved.

By \eqref{eq3.2}, in order to prove part (b) it remains for us to show that $f$ is equal to $\pi_\lambda=\pi(r,s,p)$. 
Let $n\in [r]$. By Definition~\ref{D:pi}, $n^{\pi_\lambda} = r+1-n+s-\lambda_n$, which by 
\eqref{eqelambda} is equal to $r+1-n-\eps_n$, and this in turn, by \eqref{eq7}, is equal to $n^{\pi_\eps}$. 
Thus $\pi_\lambda = \pi_\eps$, and hence by \eqref{eq7b}, $\pi_\lambda = \eps^{\Phi_{\calE\Pi}}$.
Applying   part (a) and then Theorem~\ref{prop3},  we obtain 
$\eps^{\Phi_{\calE\Pi}}=T^{\Phi_{\calT\calE}\Phi_{\calE\Pi}}=T^{\Phi_{\calT\Pi}}=f$.
Thus part (b) is proved.

Suppose $n\in\Delta_i:=[t_i+1,t_{i+1}]\cap\Z$ for some $i$ with $0\leq i\leq|T|$. 
Then $t_i=n-L(n)$ and $t_{i+1}=n+R(n)$. By part~(b), the restriction of $\pi(r,s,p)$ 
to $\Delta_i$ is $\Rev(t_i+1,t_{i+1})$. Hence 
\begin{align*}
n^{\pi(r,s,p)}=n^{\Rev(t_i+1,t_{i+1})}
&=t_i+1+t_{i+1}-n\\
&=(n-L(n))+1+(n+R(n))-n=n+1-L(n)+R(n),
\end{align*}
proving part~(c).

Finally, by Definition~\ref{D:pi} and part~(c) we have
\begin{align*}
\lambda_n&=r+1-n+s-n^{\pi(r,s,p)}\\
&=r+1-n+s-(n+1-L(n)+R(n))=r+s-2n+L(n)-R(n).
\end{align*}
This proves part~(d).
\end{proof}

\begin{corollary}\label{cor2}
For integers $r$ and $s$ with $1\leq r\leq s$ and a prime number $p$, the Norman permutation $\pi(r,s,p)$
satisfies  $\pi(r,s,p)^2=1$.
\end{corollary}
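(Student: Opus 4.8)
The plan is to deduce the corollary immediately from what has already been established. Set $T=\{i\in[r-1]\mid\delta_i=1\}$ as in~\eqref{eq5}; this is an element of $\calT$, and Theorem~\ref{Prop2}(b) identifies $\pi(r,s,p)$ with $T^{\Phi_{\calT\Pi}}=\pi_T$. Since Theorem~\ref{prop3}(a) asserts that $\pi_T^2=1$ for every $T\in\calT$, we conclude $\pi(r,s,p)^2=1$ at once. So the entire proof is a two-line invocation of Theorem~\ref{Prop2}(b) followed by Theorem~\ref{prop3}(a).

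For completeness I would recall why $\pi_T^2=1$. Writing $T=\{t_1,\dots,t_{|T|}\}$ with $t_1<\cdots<t_{|T|}$ and $t_0=0$, $t_{|T|+1}=r$, equation~\eqref{eq6} expresses $\pi_T$ as the product $\prod_{i=0}^{|T|}\Rev(t_i+1,t_{i+1})$ of reversal permutations supported on the pairwise disjoint intervals $\Delta_i=[t_i+1,t_{i+1}]\cap\Z$, which partition $[r]$ by~\eqref{eq8}. Each $\Rev(t_i+1,t_{i+1})$ is an involution or the identity by the remarks following Definition~\ref{defRev}, and permutations supported on disjoint sets commute, so the square of the product is the product of the squares, hence the identity.

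There is essentially no obstacle: the substantive work is done in Theorem~\ref{Prop2}(b), which exhibits $\pi(r,s,p)$ as a product of reversals on a partition of $[r]$ into intervals; all that remains is the elementary fact that reversals square to the identity and that disjointly supported permutations commute. An alternative, slightly more self-contained route would bypass the reversal description and argue directly from Theorem~\ref{Prop2}(c): if $n\in\Delta_i=[t_i+1,t_{i+1}]\cap\Z$, then $t_i=n-L(n)$ and $t_{i+1}=n+R(n)$, so $n^{\pi(r,s,p)}=n+1-L(n)+R(n)=t_i+1+t_{i+1}-n$ again lies in $\Delta_i$; since $L$ and $R$ are constant on $\Delta_i$ this value equals the image of $n$ under $\Rev(t_i+1,t_{i+1})$, and applying it twice returns $n$. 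But citing Theorem~\ref{prop3}(a) is the cleanest presentation.
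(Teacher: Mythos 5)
Your proposal is correct and follows essentially the same route as the paper: the paper's proof likewise deduces the result from Theorem~\ref{Prop2}, which expresses $\pi(r,s,p)$ as a product of reversals on the disjoint intervals determined by $T=\{i\in[r-1]\mid\delta_i=1\}$, so that $\pi(r,s,p)^2=1$ follows at once. Your explicit appeal to Theorem~\ref{prop3}(a) and the remark on disjointly supported involutions merely spells out what the paper leaves implicit.
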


\begin{proof}
Theorem~\ref{Prop2} shows that $\pi(r,s,p)$ is a product
of reversals related to the subset $T=\{i\in[r-1]\mid \delta_i=1\}$,
and in particular, $\pi(r,s,p)^2=1$.
\end{proof}

\section{Translating from Jordan partitions \texorpdfstring{$\lambda$}{} to Norman permutations \texorpdfstring{$\pi$}{}}

The partition $\lambda(r,s,p)$ and the permutation $\pi(r,s,p)$ determine each other by \eqref{eq4}. In this section we list some results for $\pi(r,s,p)$
which all have a simpler form than the corresponding results for $\lambda(r,s,p)$.

\subsection{Recursive results}
First we prove two results which provide links between Norman involutions for certain different parameters.

\begin{proposition}\label{lem6}
Let $r, s, s', m$ be positive integers such that $r\leq s$, and let $p$ be a prime.
Then the following statements hold.
\begin{enumerate}[{\rm (a)}]
  \item If  $s\le p\le r+s-2$, then $\pi(r,s,p)=\Rev(1,r+s-p)$.
  \item If $r\le p^m$, then $\pi(r,s,p)=\pi(r,s+kp^m,p)$ for each $k\ge0$.
  \item If $r\le\min\{s,s',p^m\}$ and $s+s'\equiv 0\pmod{p^m}$, then
   $ \pi(r,s',p)=\pi(r,s,p)^{\Rev(1,r)}.$
\end{enumerate}
\end{proposition}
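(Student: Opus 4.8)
The plan is to route everything through Theorem~\ref{Prop2}. Recall from that theorem that $\pi(r,s,p)=\prod_{i=0}^{|T|}\Rev(t_i+1,t_{i+1})$ is completely determined by the subset $T=T(r,s,p)=\{n\in[r-1]\mid p\nmid D_n(r,s)\}$ of $[r-1]$, and equivalently (by Theorem~\ref{Prop2}(a) together with~\eqref{eq7}) by the deviation vector $\eps(r,s,p)$ via the rule $n^{\pi(r,s,p)}=r+1-n-\eps_n(r,s,p)$. Thus part~(a) is a matter of computing the set $T(r,s,p)$, while parts~(b) and~(c) are matters of comparing deviation vectors for different second arguments.

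For part~(a) I would compute $T(r,s,p)$ from the product formula~\eqref{eq1} using Kummer's theorem, which says $v_p\binom{x+y}{x}$ equals the number of carries when adding $x$ and $y$ in base~$p$. Note $r=1$ is impossible under the hypothesis of~(a), so assume $2\le r\le s\le p\le r+s-2$. For $0\le i\le n-1$ we have $0\le s-n+i\le s-1<p$, so every denominator $\binom{s-n+i}{s-n}$ in~\eqref{eq1} is prime to~$p$; and since also $0\le r-n+i\le r-1\le s-1<p$, the integers $s-n$ and $r-n+i$ are single base-$p$ digits, whence $v_p\binom{(s-n)+(r-n+i)}{s-n}$ equals $1$ if $(s-n)+(r-n+i)\ge p$ and $0$ otherwise. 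Therefore $p\nmid D_n(r,s)$ if and only if no $i\in\{0,\dots,n-1\}$ satisfies $(s-n)+(r-n+i)\ge p$, equivalently (taking the largest case $i=n-1$) $s+r-n-1<p$, equivalently $n\ge r+s-p$. Since $p\le r+s-2$ gives $r+s-p\ge2$ and $s\le p$ gives $r+s-p\le r$, we get $T(r,s,p)=\{n\in[r-1]\mid n\ge r+s-p\}=\{r+s-p,r+s-p+1,\dots,r-1\}$. Substituting this $T$ into~\eqref{eq6}: the first block $[1,r+s-p]$ is reversed while every subsequent block is a singleton (and if $s=p$ then $T=\emptyset$ and $r+s-p=r$, which is the same statement), so $\pi(r,s,p)=\Rev(1,r+s-p)$.

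For parts~(b) and~(c) I would invoke the periodicity and duality of the Jordan partition $\lambda(r,s,p)$ recorded in~\cite{GPX1}*{Theorem~4} and translate them through Theorem~\ref{Prop2}(a). For~(b): it suffices to treat the case that $m$ is minimal with $r\le p^m$ (for a larger $m$, $kp^m$ is a multiple of the corresponding period). The periodicity property gives $\lambda_n(r,s+p^m,p)=\lambda_n(r,s,p)+p^m$ for all $n$, hence $\eps(r,s+p^m,p)=\eps(r,s,p)$; so by Theorem~\ref{Prop2}(a) and~\eqref{eq7}, $\pi(r,s+p^m,p)=\pi(r,s,p)$, and induction on $k$ (using $r\le s\le s+jp^m$ at each step) completes~(b). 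For~(c): again reduce to $m$ minimal, using that $p^m\mid(s+s')$ forces $p^{m'}\mid(s+s')$ for $m'\le m$ while the conclusion does not involve~$m$. Conjugation by $\Rev(1,r)$, which acts on $[r]$ by $n\mapsto r+1-n$, sends $\Rev(a,b)$ to $\Rev(r+1-b,r+1-a)$ and hence sends $\pi_T$ to $\pi_{T'}$ with $T'=\{r-t\mid t\in T\}$; since $\Phi_{\calT\Pi}$ is a bijection and by Theorem~\ref{Prop2}(a), the desired identity $\pi(r,s',p)=\pi(r,s,p)^{\Rev(1,r)}$ is equivalent to $\eps_n(r,s',p)=-\eps_{r+1-n}(r,s,p)$ for all $n\in[r]$. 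This last equality is exactly the duality statement of~\cite{GPX1}*{Theorem~4}, which under the hypotheses $r\le\min\{s,s',p^m\}$ and $s+s'\equiv0\pmod{p^m}$ reads $\lambda_n(r,s',p)+\lambda_{r+1-n}(r,s,p)=s+s'$; a one-line check with~\eqref{eq7} then confirms that both sides of~(c) send $n$ to $r+1-n+\eps_{r+1-n}(r,s,p)$.

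The genuinely new content here is the carry computation of part~(a), which is short once set up. For parts~(b) and~(c) the substantive input is the periodicity and duality of $\lambda(r,s,p)$, which I would quote from~\cite{GPX1}*{Theorem~4} rather than re-derive. If one wanted a proof self-contained within the present framework, the main obstacle would be establishing directly that $p\nmid D_n(r,s)\iff p\nmid D_n(r,s+p^m)$ and $p\nmid D_n(r,s)\iff p\nmid D_{r-n}(r,s')$: this requires a delicate base-$p$ analysis --- in particular writing the number of carries in $\binom{(s-n)+c}{s-n}$, for $0\le c<p^m$, as a quantity depending only on $(s-n)_{\bmod p^m}$ plus a correction governed by the maximal run of digits equal to $p-1$ in $s-n$ at positions $\ge m$, and then controlling how the numerator and denominator $p$-valuations in~\eqref{eq1} interact. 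This is noticeably more involved than part~(a), which is why I would lean on~\cite{GPX1}*{Theorem~4}.
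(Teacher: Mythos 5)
Your argument is correct, and for parts (b) and (c) it is essentially the paper's own proof: both rest on the periodicity and duality statements of \cite{GPX1}*{Theorem~4}, translated into the deviation-vector/permutation language via \eqref{eq4} and \eqref{eqelambda} (your reformulation of (c) as $\eps_n(r,s',p)=-\eps_{r+1-n}(r,s,p)$ is exactly the identity the paper manipulates, and your reduction to minimal $m$ is harmless). Part (a), however, genuinely diverges. The paper quotes Renaud's explicit description of $\lambda(r,s,p)$ for $s\le p\le r+s-2$ (\cite{Renaud1979}*{Theorem~1}), reads off $\lambda_n$, and substitutes into \eqref{eq4}. You instead compute the set $T=\{n\in[r-1]\mid\delta_n=1\}$ directly from the product formula \eqref{eq1}: since $s\le p$ forces every denominator $\binom{s-n+i}{s-n}$ and both ``digits'' $s-n$, $r-n+i$ to be less than $p$, Kummer's theorem reduces $p\mid D_n(r,s)$ to the single inequality $s+r-n-1\ge p$, giving $T=\{r+s-p,\dots,r-1\}$ and hence $\pi_T=\Rev(1,r+s-p)$ by \eqref{eq6} (the boundary cases $s=p$, where $T=\emptyset$, and $r+s-p\ge 2$ check out). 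Your route has the merit of staying entirely inside the $\delta$/$T$/$\Phi_{\calT\Pi}$ machinery of Theorem~\ref{Prop2} and needing only an elementary carry count, at the cost of invoking the Roberts determinant formula and Kummer; the paper's route is shorter given that Renaud's closed form is already available for exactly this parameter range. Both are sound.
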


\begin{proof}
(a) Suppose that $s\le p\le r+s-2$. By~\cite{Renaud1979}*{Theorem~1},
\[
  \lambda(r,s,p)=
  (\underbrace{\,p,\dots,p}_{r+s-p},2p-r-s-1,2p-r-s-3,\dots,s-r+1).
\]
Thus $\lambda_n=p$ if $1\leq n\leq r+s-p$, and $\lambda_n = r+s+1-2n$
if $r+s-p+1\leq n\leq r$.  Hence, by~\eqref{eq4},
\[
  n^{\pi(r,s,p)}=
  \begin{cases}
    r+s-p+1-n\quad&\textup{if $1\leq n\leq r+s-p$},\\
    n\quad&\textup{if $r+s-p+1\leq n\leq r$}.
  \end{cases}
\]
Therefore $\pi(r,s,p)=\Rev(1,r+s-p)$ as claimed.

(b) Let $k\ge0$ and let $\lambda_1'\ge\dots\ge\lambda_r'$
be the parts of $\lambda(r,s+kp^m,p)$. Then by \cite{GPX1}*{Theorem~4(a)},
$\lambda_n-s=\lambda_n'-(s+kp^m)$ for $n=1,\dots,r$. This implies by \eqref{eq4}
that $\pi(r,s,p)=\pi(r,s+kp^m,p)$.

(c) Write $\lambda(r,s,p)=(\lambda_1,\dots,\lambda_r)$ and
$\lambda(r,s',p)=(\lambda'_1,\dots,\lambda'_r)$, where
$\lambda_1\geq \cdots\geq \lambda_r$ and $\lambda_1'\geq \dots\geq\lambda_r'$.
For $n\in[r]$ write
$\eps_n=\lambda_n-s$ and $\eps'_n=\lambda'_n-s'$.
Then $n^{\pi(r,s,p)}$ equals $r-n+1-\eps_n$ by \eqref{eq4}, and it follows
from~\cite{GPX1}*{Theorem~4(b)} that $-\eps_{r-n+1}=\eps'_n$. Therefore,
using \eqref{eq4}, \eqref{eqelambda}, and Definition~\ref{defRev},
\begin{align*}
  n^{\Rev(1,r)\pi(r,s,p)\Rev(1,r)}&=(r-n+1)^{\pi(r,s,p)\Rev(1,r)}\\
  &=((r-(r-n+1)+1)-\eps_{r-n+1})^{\Rev(1,r)}\\
  &=(n+\eps'_n)^{\Rev(1,r)}=(r-(n+\eps'_n)+1)=n^{\pi(r,s',p)}.\qedhere\\
\end{align*}
\end{proof}

For $m< n$, we view $\Sy_m$ as a subgroup of $\Sy_n$ fixing the set
$\{m+1,\dots,n\}$ elementwise. This convention is employed in the following lemma to 
obtain a reduction formula for $\pi(r,s,p)$.  We also use the deviation vector
$ \eps(r,s,p)$ of a  Jordan partition $\lambda(r,s,p)$ defined in \eqref{eqelambda}.

\begin{proposition}\label{T:s1}
Let $p$ be a prime, and let $r, s_0, s_1$ be integers satisfying $1\le s_0<p$ and $1\le s_1< r < p^m$.
\begin{enumerate}[{\rm (a)}]
  \item If $s=s_0p^m+s_1$, then $\pi(r,s,p)=\pi(s_1,r,p)\circ\Rev(s_1+1,r)$.
  \item If $s'=(s_0+1)p^m-s_1$, then $\pi(r,s',p)=\Rev(1,r-s_1)\circ\pi(s_1,r,p)^{\Rev(1,r)}$.
\end{enumerate}
\end{proposition}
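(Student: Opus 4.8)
The plan is to derive both parts from Proposition~\ref{lem6} combined with the
periodicity statement Proposition~\ref{lem6}(b), together with the known reduction
formula that expresses $\lambda(r,s,p)$ for $r<p^m\le s$ in terms of
$\lambda(s_1,r,p)$ (this is the main input we shall invoke; it is the
content of, e.g., \cite{GPX1}*{Theorem~4} or \cite{II2009}, which governs how
the Jordan partition behaves when one shifts the larger parameter by a $p$-power
and when one ``swaps'' the roles of the two parameters modulo $p^m$). For
part~(a), write $s=s_0p^m+s_1$ with $1\le s_1<r<p^m$. Since $r\le p^m$,
Proposition~\ref{lem6}(b) gives $\pi(r,s,p)=\pi(r,s_0p^m+s_1,p)$, so it suffices to
analyse $\pi(r,s_1+p^m,p)$ (and, if $s_0\ge 1$, just one application of~(b) more,
after noting $r\le s_1+p^m$); so without loss of generality $s_0=1$ and
$s=p^m+s_1$.

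The core step is then to compute the deviation vector
$\eps(r,p^m+s_1,p)=(\lambda_1-s,\dots,\lambda_r-s)$. For the first $r-s_1$
indices the parts $\lambda_n$ are ``large'' (governed by the $p$-power $p^m$
sitting above $r$), and on this range one checks directly from~\eqref{eq4} that
$n^{\pi(r,s,p)}=(r-s_1)+1-n$, i.e.\ $\pi(r,s,p)$ restricted to $[1,r-s_1]$ is
$\Rev(1,r-s_1)$; this is exactly the ``$\Rev(s_1+1,r)$'' part of the claimed
formula after relabelling (note $\Rev(s_1+1,r)$ and $\Rev(1,r-s_1)$ are the same
pattern on complementary blocks — here I must be careful which block the reversal
acts on, and match it against the statement). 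For the remaining indices
$r-s_1<n\le r$ one uses the swap reduction: the corresponding tail of
$\lambda(r,p^m+s_1,p)$ agrees, after the affine reindexing $n\mapsto$ something
like $n-(r-s_1)$, with $\lambda(s_1,r,p)$, and translating through
Definition~\ref{D:pi} turns this into $\pi(s_1,r,p)$ acting on the block
$[r-s_1+1,r]$ — but since $\pi(s_1,r,p)\in\Sy_{s_1}$ under the embedding
convention just fixed, that is $\pi(s_1,r,p)$ acting on $[1,s_1]$, i.e.\ on the
\emph{left} block. Here I will need to insert an outer conjugation by a reversal
to move the active block to the correct end, and a bit of bookkeeping will show
that the product of that block-$\Rev$ and the embedded $\pi(s_1,r,p)$ is exactly
$\pi(s_1,r,p)\circ\Rev(s_1+1,r)$ as stated (the two factors act on disjoint
blocks $[1,s_1]$ and $[s_1+1,r]$, so they commute and the composition is just the
``stacking'' of the two pieces).

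Part~(b) then follows from part~(a) by applying Proposition~\ref{lem6}(c): taking
$s=s_0p^m+s_1$ and $s'=(s_0+1)p^m-s_1$, we have $s+s'=(2s_0+1)p^m\equiv0\pmod{p^m}$
and $r\le\min\{s,s',p^m\}$, so Proposition~\ref{lem6}(c) gives
$\pi(r,s',p)=\pi(r,s,p)^{\Rev(1,r)}$. Substituting the formula from part~(a) and
using $\bigl(\pi(s_1,r,p)\circ\Rev(s_1+1,r)\bigr)^{\Rev(1,r)}
=\pi(s_1,r,p)^{\Rev(1,r)}\circ\Rev(s_1+1,r)^{\Rev(1,r)}$, it remains to observe
that conjugating $\Rev(s_1+1,r)$ by $\Rev(1,r)$ sends the block $[s_1+1,r]$ to the
block $[1,r-s_1]$, hence $\Rev(s_1+1,r)^{\Rev(1,r)}=\Rev(1,r-s_1)$, and these two
factors again act on disjoint blocks and commute, giving
$\pi(r,s',p)=\Rev(1,r-s_1)\circ\pi(s_1,r,p)^{\Rev(1,r)}$, as required.

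I expect the main obstacle to be part~(a): specifically, identifying precisely
which affine reindexing matches the tail of $\lambda(r,p^m+s_1,p)$ with
$\lambda(s_1,r,p)$, and then tracking that reindexing faithfully through
Definition~\ref{D:pi} so that the ``swap'' shows up cleanly as a conjugate of
$\pi(s_1,r,p)$ on the right block. The off-by-one indexing in the reversal blocks
$\Rev(s_1+1,r)$ versus $\Rev(1,r-s_1)$, and the interplay with the subgroup
embedding convention $\Sy_{s_1}\le\Sy_r$, is where the bookkeeping is most
error-prone; once the tail identification is pinned down, parts of the argument
reduce to the observation that reversals on disjoint blocks commute and that
conjugation by $\Rev(1,r)$ simply reflects block positions.
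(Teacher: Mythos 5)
Your derivation of part~(b) from part~(a) is exactly the paper's: check $s+s'\equiv0\pmod{p^m}$ and $r\le\min\{s,s',p^m\}$, apply Proposition~\ref{lem6}(c), and conjugate the two commuting factors, using $\Rev(s_1+1,r)^{\Rev(1,r)}=\Rev(1,r-s_1)$. That half is sound. Part~(a), however, has two genuine problems. First, the input you invoke is wrong: \cite{GPX1}*{Theorem~4} gives periodicity in $s$ and a duality under $s\mapsto -s$, but it does not give the ``swap'' reduction you need. The paper's proof rests on Renaud's algorithm in the form \cite{GPX2}*{Proposition~6}, applied with $(r_0,r_1,s_0,s_1)=(0,r,s_0,s_1)$ and the verification that $r_0+s_0<p$ forces $c=d_1=d_2=0$; this yields the identity $V_r\otimes V_s=(r-s_1)V_{s_0p^m}\oplus\bigoplus_{j=1}^{s_1}V_{s_0p^m+\lambda_j}$, where the $\lambda_j$ are the parts of $\lambda(s_1,r,p)$. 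Your proposal never actually establishes the partition identity on which everything else depends; ``the tail agrees after an affine reindexing $n\mapsto$ something like $n-(r-s_1)$'' is precisely the step that must be proved, and it is not a consequence of the results you cite.

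Second, you have the block structure backwards. Since each $\lambda_j>0$, the $s_1$ parts $s_0p^m+\lambda_j$ strictly exceed the $r-s_1$ constant parts $s_0p^m$, so in the decreasing ordering they occupy positions $1,\dots,s_1$. Feeding this into \eqref{eq4} gives $n^{\pi(r,s,p)}=(r+1-n)+s_1-\lambda_n=n^{\pi(s_1,r,p)}$ for $n\le s_1$ and $n^{\pi(r,s,p)}=(s_1+1)+r-n$ for $s_1<n\le r$: the $\pi(s_1,r,p)$ factor already sits on the left block $[1,s_1]$, matching the embedding convention $\Sy_{s_1}\le\Sy_r$ with no conjugation, and the reversal is $\Rev(s_1+1,r)$ on the right block. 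Your version puts $\Rev(1,r-s_1)$ on the left and a conjugated copy of $\pi(s_1,r,p)$ on the right --- that is the shape of the answer to part~(b), not part~(a). A small check exposes this: for $(r,s,p)=(3,5,2)$ one has $\lambda(3,5,2)=(7,4,4)$ and $\pi(3,5,2)=(2,3)=\Rev(2,3)$, which fixes the point $1$, whereas any permutation of the form $\Rev(1,2)\circ(\text{something supported on }\{3\})$ moves it. The ``outer conjugation to move the active block to the correct end'' you anticipate is therefore not a bookkeeping nicety but an error that would produce the wrong permutation.
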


\begin{proof}
(a) The Jordan partition $\lambda(r,s,p)=(\lambda_1,\dots,\lambda_r)$ gives rise
to an equation in the Green ring~\cite{Green1962}, namely
$V_r\otimes V_s=V_{\lambda_1}\oplus\cdots\oplus V_{\lambda_r}$ where $V_k$ denotes
an indecomposable $J_k$-module of dimension~$k$ over the field of order $p$.
Although the precise details are not
significant here, we note that translating between a partition
and a module decomposition is straightforward. The notation
$rV_s$ means $\bigoplus_{i=1}^r V_s$, and it corresponds to the
partition $(s,\dots,s)$ of $rs$ with $r$ parts.
Renaud~\cite{Renaud1979}*{Theorem~2} gave a very complicated recurrence algorithm for decomposing
$V_r\otimes V_s$.  We refer to this algorithm with the notation used
in~\cite{GPX2}*{Proposition~6}.
Set $r_0=0$ and $r_1=r$ so that $r=r_0 p^m+r_1$. Also set $s=s_0 p^m+s_1$,
with $s_0, s_1$ as in the statement. Then $1\le r < p^m < s<p^{m+1}$.  Thus the
hypotheses of~\cite{GPX2}*{Proposition~6} hold with $n=m$, and we have also
$r_0+s_0<p$, so that the parameters $c, d_1, d_2$ of
\cite{GPX2}*{Proposition~6} satisfy $c=d_1=d_2=0$. To aid the exposition we change 
our usage of the $\lambda_i$ and suppose that
$V_{s_1}\otimes V_r=V_r\otimes V_{s_1}=V_{\lambda_1}\oplus\cdots\oplus V_{\lambda_{s_1}}$,
where $\lambda_1\geq \lambda_2\geq \dots\geq \lambda_{s_1}>0$,
so $\lambda(s_1, r, p)=(\lambda_1, \lambda_2, \dots, \lambda_{s_1})$.
Then~\cite{GPX2}*{Proposition~6} gives
\[
  V_r\otimes V_s=(r-s_1)V_{s_0p^m}\oplus\bigoplus_{j=1}^{s_1} V_{s_0p^m+\lambda_j},
\]
in other words,
$\lambda(r,s,p)=(s_0p^m+\lambda_1,\dots,s_0p^m+\lambda_{s_1},s_0p^m,\dots,s_0p^m)$,
where the part $s_0p^m$ has multiplicity $r-s_1>0$.
Therefore the Jordan permutation for this partition satisfies, by \eqref{eq4},
\[
  n^{\pi(r,s,p)}=\begin{cases}
               r+1-n+s_1-\lambda_n&\quad\textup{for $1\le n\le s_1$,}\\
               r+1-n+s_1&\quad\textup{for $s_1< n\le r$.}\end{cases}
\]
Recall that $\pi(s_1, r, p)$ is, by definition, a permutation in $\Sy_{s_1}$ and we identify it with the permutation 
of $[r]$ which acts as $\pi(s_1, r, p)$ on $[s_1]$ and fixes the remaining points. Then
referring again to  \eqref{eq4} for the partition $\lambda(s_1,r,p)$, and to Definition~\ref{defRev} for
$\Rev(s_1+1,r)$, we see that
$\pi(r,s_0p^m+s_1,p)=\pi(s_1,r,p)\circ\Rev(s_1+1,r)$.

(b) Let  $s'=(s_0+1)p^m-s_1$, $s=s_0p^m=s_1$, and note that $s+s'\equiv 0\pmod{p^m}$. Since $s_0>0$, we have
$r\leq\min\{s,s',p^m\}$, and it follows from Proposition~\ref{lem6}~(c)
that $\pi(r,s',p) = \pi(r, s, p)^{\Rev(1,r)}$.  By part (a), this is equal to
$(\pi(s_1,r,p)\circ\Rev(s_1+1,r))^{\Rev(1,r)}$.  A straightforward computation shows that
$\Rev(s_1+1,r)^{\Rev(1,r)} = \Rev(1, r-s_1)$.  Note that, since the support (set of moved points) of
$\pi(s_1,r,p)$ is contained in $[s_1]$, the support of $\pi(s_1,r,p)^{\Rev(1,r)}$ is therefore contained in 
$[s_1]^{\Rev(1,r)}=\{r-s_1+1, \dots,r\}$ which is disjoint from the support of $\Rev(1,r-s)$.  
Part (b) follows on noting that the two factors commute.
\end{proof}

Next we use the expression for $\pi(r,s,p)$ given by Theorem~\ref{Prop2} to deduce a similar expression for
$\pi(p^\ell r,p^\ell s,p)$ for arbitrary $\ell$. This may be viewed as a 
 \emph{`$p^\ell$-multiple proposition'}.

\begin{proposition}\label{prop4}
Suppose (as in  Theorem~\textup{\ref{Prop2}}) that $\pi(r,s,p)=\prod_{i=1}^k\Rev(t_{i-1}+1,t_i)$ with integers
$$
0=t_0<t_1<\dots<t_{k-1}<t_k=r,
$$
and let $\ell$ be a positive integer.
Then $\pi(p^\ell r,p^\ell s,p)=\prod_{i=1}^k\Rev(p^\ell t_{i-1}+1,p^\ell t_i)$.
\end{proposition}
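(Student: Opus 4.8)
The plan is to use Theorem~\ref{Prop2}, which expresses $\pi(r,s,p)$ as a product of reversals $\prod_{i=1}^k\Rev(t_{i-1}+1,t_i)$ determined by the subset $T=\{i\in[r-1]\mid\delta_i(r,s,p)=1\}$, where $\delta_n(r,s,p)$ records whether the determinant $D_n(r,s)$ is nonzero modulo $p$. Applying the same theorem to the pair $(p^\ell r,p^\ell s)$, I see that $\pi(p^\ell r,p^\ell s,p)$ is likewise a product of reversals determined by the subset $T'=\{i\in[p^\ell r-1]\mid\delta_i(p^\ell r,p^\ell s,p)=1\}$. So the statement will follow once I show that
\[
  T'=\{p^\ell t : t\in T\cup\{0,r\}\}\setminus\{0,p^\ell r\},
\]
i.e.\ that $\delta_n(p^\ell r,p^\ell s,p)=1$ for $n\in[p^\ell r-1]$ precisely when $p^\ell\mid n$ and $\delta_{n/p^\ell}(r,s,p)=1$. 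Indeed, if $T=\{t_1,\dots,t_k\}$ with $t_0=0$, $t_{k}=r$ as in the statement (relabelling the index set so that $k=|T|+1$), then $T'$ consists exactly of the interior points $p^\ell t_1<\dots<p^\ell t_{k-1}$, with $p^\ell t_0=0$ and $p^\ell t_k=p^\ell r$ as the added endpoints, and this gives $\pi(p^\ell r,p^\ell s,p)=\prod_{i=1}^k\Rev(p^\ell t_{i-1}+1,p^\ell t_i)$ as required.

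The heart of the matter is therefore a congruence statement about the determinants $D_n$: for $1\le n\le p^\ell r-1$,
\[
  D_n(p^\ell r,p^\ell s)\not\equiv 0\pmod p
  \quad\Longleftrightarrow\quad
  p^\ell\mid n\ \text{ and }\ D_{n/p^\ell}(r,s)\not\equiv 0\pmod p.
\]
To prove this I would work from the product formula~\eqref{eq1}, namely
\[
  D_n(r,s)=\prod_{i=0}^{n-1}\frac{\binom{s+r-2n+i}{s-n}}{\binom{s-n+i}{s-n}},
\]
combined with Kummer's theorem (the $p$-adic valuation of $\binom{x+y}{x}$ equals the number of carries when adding $x$ and $y$ in base $p$) or, equivalently, Lucas' theorem. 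The key arithmetic observation is that multiplying all of $r$, $s$, $n$ by $p^\ell$ shifts every base-$p$ digit string left by $\ell$ places (appending $\ell$ trailing zeros), which does not change the carry count in any of the relevant additions; hence $D_{p^\ell n}(p^\ell r,p^\ell s)\equiv D_n(r,s)\pmod p$ up to a unit, so $\delta_{p^\ell n}(p^\ell r,p^\ell s,p)=\delta_n(r,s,p)$. Separately, when $p^\ell\nmid n$ one must show $p\mid D_n(p^\ell r,p^\ell s)$, i.e.\ $\delta_n=0$; this again comes down to showing a carry necessarily occurs in the base-$p$ addition underlying one of the binomial coefficients $\binom{p^\ell(s+r)-2n+i}{p^\ell s-n}$ when $n$ is not divisible by $p^\ell$.

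An alternative, possibly cleaner route avoids the determinant formula entirely: one can instead cite the duality/periodicity machinery of \cite{GPX1}*{Theorem~4} together with the ``$p^\ell$-multiple'' behaviour of the Jordan partition itself, namely that $\lambda(p^\ell r,p^\ell s,p)$ is obtained from $\lambda(r,s,p)$ by a predictable rescaling (each part $\lambda_n$ of $\lambda(r,s,p)$ contributing a block of parts near $p^\ell\lambda_n$), and then translate through Definition~\ref{D:pi}; but this requires locating the precise partition identity, which may not be stated in the excerpt, so I would keep the determinant argument as the primary one. The main obstacle, in either approach, is the careful bookkeeping in the carry/valuation argument: one must handle the interaction between the shift by $p^\ell$ and the ``fractional'' index shift $i$ ranging over $0,\dots,n-1$, and verify that divisibility of $n$ by $p^\ell$ is exactly the dividing line. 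I expect no conceptual difficulty beyond this, and the reversal-product conclusion then drops out formally from Theorem~\ref{Prop2} applied to both parameter pairs.
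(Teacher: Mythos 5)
Your reduction of the statement to a claim about the sets $T$ and $T'$ is sound: by Theorem~\ref{Prop2}(b) applied to both parameter pairs, and the injectivity of $\Phi_{\calT\Pi}$, the proposition is indeed equivalent to $T'=\{p^\ell t\mid t\in T\}$, i.e.\ to the asserted equivalence $\delta_n(p^\ell r,p^\ell s,p)=1 \Leftrightarrow p^\ell\mid n$ and $\delta_{n/p^\ell}(r,s,p)=1$. The genuine gap is that this arithmetic lemma --- the entire content of the proof --- is only asserted, and the heuristic offered for it does not straightforwardly work. In the product formula \eqref{eq1} applied to $(p^\ell r,p^\ell s,p^\ell n)$, the index $i$ runs over \emph{all} of $0,\dots,p^\ell n-1$, not just multiples of $p^\ell$, so the parameters $p^\ell(s+r-2n)+i$ and $p^\ell(s-n)+i$ are not obtained from the original ones by appending $\ell$ zero digits; the ``left shift preserves carries'' observation only addresses a fraction of the factors. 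Moreover each factor is a \emph{ratio} of binomial coefficients, so for the case $p^\ell\nmid n$ it is not enough to exhibit a carry in one numerator: you must control the signed sum $\sum_i\bigl(v_p\binom{s'+r'-2n+i}{s'-n}-v_p\binom{s'-n+i}{s'-n}\bigr)$ and show it is strictly positive, which requires genuinely new bookkeeping that you have not supplied. As written, the proof proves nothing beyond the (correct) reformulation.

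Ironically, the ``alternative route'' you mention and then set aside is exactly the paper's proof, and it is short: from the hypothesis and Definition~\ref{D:pi} one reads off that $\lambda_n=r+s-t_{i-1}-t_i$ for $t_{i-1}+1\le n\le t_i$; the $p^\ell$-multiple identity for Jordan partitions is precisely \cite{GPX1}*{Theorem~5} (it is a citable published result, not something that needs to be located in this excerpt), giving $\lambda'_j=p^\ell(r+s-t_{i-1}-t_i)$ for $p^\ell t_{i-1}+1\le j\le p^\ell t_i$; and Definition~\ref{D:pi} applied again converts this back into the product of rescaled reversals. I would recommend abandoning the determinant computation and writing up that argument instead.
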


\begin{proof}
Let $\lambda(r,s,p)=(\lambda_1,\dots,\lambda_r)$ and $\lambda(p^\ell r,p^\ell s,p)=(\lambda'_1,\dots,\lambda'_{p^\ell r})$. As
$$
\pi(r,s,p)=\prod_{i=1}^k\Rev(t_{i-1}+1,t_i),
$$
we have $n^{\pi(r,s,p)}=t_{i-1}+t_i+1-n$ for each $n$ such that $t_{i-1}+1\le n\le t_i$. This implies that $\lambda_n=r+s-t_{i-1}-t_i$ for each such $n$ by Definition~\ref{D:pi}, and this holds for each $i$. Then from~\cite{GPX1}*{Theorem~5}, for each $i\le k$,  we deduce that $\lambda'_j=p^\ell r+p^\ell s-p^\ell t_{i-1}-p^\ell t_i$ for $p^\ell t_{i-1}+1\le j\le p^\ell t_i$. Thus, again by Definition~\ref{D:pi}, we obtain
$$
j^{\pi(p^\ell r,p^\ell s,p)}=p^\ell r+1-j+p^\ell s-(p^\ell r+p^\ell s-p^\ell t_{i-1}-p^\ell t_i)=p^\ell t_{i-1}+p^\ell t_i+1-j
$$
where $p^\ell t_{i-1}+1\le j\le p^\ell t_i$, which means
$$
\pi(p^\ell r,p^\ell s,p)=\prod_{i=1}^k\Rev(p^\ell t_{i-1}+1,p^\ell t_i)
$$
as asserted.
\end{proof}

\begin{remark}\label{rem:compute}{\rm
Proposition~\ref{prop4} is effective in computing Jordan permutations, but it must be used with care. In particular,
if, for some $i$, we have $n_i=t$ and $n_{i-1}=t-1$, then by Theorem~\ref{Prop2}, the expression for $\pi(r,s,p)$ involves
the fixed point $\Rev(t,t)$, and as we remarked earlier, fixed points are usually omitted when writing a permutation in its disjoint 
cycle representation. This fixed point of $\pi(r,s,p)$ corresponds in $\pi(p^\ell r,p^\ell s,p)$ to the factor $\Rev(p^\ell(t-1)+1, p^\ell t)$.
}
\end{remark}

The `top group' in our wreath product theorem (Theorem~\ref{T:wreath})
is related to the fact
that $n^{\pi(r,s,p)}$ has a particularly simple form modulo $r_p$.

\begin{lemma}\label{lem8}
If $p^e\mid r$, then  for each $n\in[r]$, $n^{\pi(r,s,p)}\equiv s+1-n\pmod{p^e}$.
\end{lemma}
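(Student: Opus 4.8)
The plan is to compute $n^{\pi(r,s,p)}$ modulo $p^e$ using the combinatorial description of $\pi(r,s,p)$ provided by Theorem~\ref{Prop2}. Recall from Theorem~\ref{Prop2}(b) that $\pi(r,s,p)=\prod_{i=0}^{|T|}\Rev(t_i+1,t_{i+1})$ where $T=\{i\in[r-1]\mid\delta_i=1\}$ and $0=t_0<t_1<\cdots<t_{|T|}<t_{|T|+1}=r$, and from Theorem~\ref{Prop2}(c) that for $n\in\Delta_i=[t_i+1,t_{i+1}]\cap\Z$ we have $n^{\pi(r,s,p)}=t_i+t_{i+1}+1-n$. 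So the claim will follow once I show that $t_i+t_{i+1}\equiv s\pmod{p^e}$ whenever $n\in\Delta_i$ forces $t_i$ and $t_{i+1}$ to be the relevant block endpoints; equivalently, I need the sum of the two consecutive endpoints of any interval $\Delta_i$ meeting $[r]$ to be $\equiv s\pmod{p^e}$. Since $t_{|T|+1}=r\equiv 0\pmod{p^e}$ by hypothesis $p^e\mid r$, the claim for the last block reduces to $t_{|T|}\equiv s\pmod{p^e}$.

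The key input will be a divisibility property of the determinants $D_n(r,s)$ modulo $p$, controlling where $\delta_n=1$, hence where the endpoints $t_i$ lie. Specifically I expect that $\delta_n$ depends on $n$ only through a congruence tied to $s$ modulo powers of $p$: using the product formula \eqref{eq1} for $D_n(r,s)$ together with Kummer's/Lucas' theorem on binomial coefficients mod $p$, one sees that $D_n(r,s)\not\equiv 0\pmod p$ imposes constraints forcing the ``jumps'' of $\delta$ to occur at values of $n$ congruent to fixed residues determined by $s$ (and $r$) modulo $p^e$. The cleanest route is probably to establish directly that if $p^e\mid r$ then $\delta_n=\delta_{n'}$ whenever $n\equiv n'$ and $(r-n)\equiv(r-n')$... more precisely, I would try to prove the symmetry $\delta_n(r,s,p)=\delta_{r-n}(r,s,p)$ is refined to: the set $T\cup\{0,r\}$ of endpoints is, modulo $p^e$, symmetric about $s/2$, i.e.\ $t\in T\cup\{0,r\}$ implies the residue $(s-t)_{\bmod p^e}$ is again such a residue. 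Then for a block $\Delta_i$ the two endpoints $t_i,t_{i+1}$ are ``mirror'' residues and $t_i+t_{i+1}\equiv s\pmod{p^e}$.

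Alternatively, and perhaps more economically, I would exploit the already-proven Proposition~\ref{lem6}: part (b) lets me replace $s$ by its residue modulo a suitable $p$-power without changing $\pi$, and part (c) gives the conjugation-by-$\Rev(1,r)$ symmetry $\pi(r,s',p)=\pi(r,s,p)^{\Rev(1,r)}$ when $s+s'\equiv 0\pmod{p^m}$ with $r\le\min\{s,s',p^m\}$. Combining the identity $n^{\pi(r,s,p)}=t_i+t_{i+1}+1-n$ with this $\Rev(1,r)$-symmetry yields $(r+1-n)^{\pi(r,s',p)}=r+1-(t_i+t_{i+1}+1-n)=r-t_i-t_{i+1}+n$, and matching against the analogous formula for $\pi(r,s',p)$ forces the endpoint sums for $s$ and for $s'=p^m-s$ (say) to be related by $t'_j+t'_{j+1}=2r-(t_i+t_{i+1})$; since $s+s'\equiv 0$, taking everything mod $p^e$ and using $p^e\mid r$ gives $t_i+t_{i+1}\equiv -(t'_j+t'_{j+1})$, and a second application (or the $s=s'$ degenerate case, or induction reducing $s$ mod $p^e$) pins down $t_i+t_{i+1}\equiv s\pmod{p^e}$.

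The main obstacle I anticipate is the first step in either approach: cleanly proving the required mod-$p^e$ symmetry of the endpoint set $T\cup\{0,r\}$ about $s/2$. Controlling $\delta_n$ via the binomial-product formula \eqref{eq1} is technically the delicate part, because one must track $p$-adic valuations of ratios of binomial coefficients as $n$ ranges, and the hypothesis $p^e\mid r$ must be used essentially (it is what makes $r\equiv 0$ and lets the symmetry center land correctly). If the paper has earlier established enough about the structure of $T$ — for instance via the cited results \cite{II2009}*{Theorem 2.2.9} or \cite{GPX1} — this may collapse to a short argument; otherwise I would fall back on the Proposition~\ref{lem6}(c) symmetry route, which sidesteps the determinant computation entirely and reduces everything to bookkeeping with reversals modulo $p^e$. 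I would present the Proposition~\ref{lem6}-based argument as the primary proof, since it reuses machinery already in place.
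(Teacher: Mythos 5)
There is a genuine gap. Your reduction is correct as far as it goes: by Theorem~\ref{Prop2}(b,c), for $n\in\Delta_i$ one has $n^{\pi(r,s,p)}=t_i+t_{i+1}+1-n$, so the lemma is equivalent to the assertion that every pair of consecutive endpoints satisfies $t_i+t_{i+1}\equiv s\pmod{p^e}$; equivalently (since $\lambda_n=r+s-t_i-t_{i+1}$ for $n\in\Delta_i$ and $p^e\mid r$), that $p^e$ divides every part $\lambda_n$. But you never prove this, and both routes you sketch fall short. The first route (Kummer/Lucas applied to the product formula \eqref{eq1} to control where $\delta_n=1$) is only an expectation, and you yourself flag it as the main obstacle. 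The second route, via Proposition~\ref{lem6}(c), is circular as described: conjugating by $\Rev(1,r)$ shows that if $s+s'\equiv0\pmod{p^m}$ then the endpoint sums for $s$ and $s'$ satisfy $t'_j+t'_{j+1}=2r-(t_i+t_{i+1})$, i.e.\ $t'_j+t'_{j+1}\equiv-(t_i+t_{i+1})\pmod{p^e}$; to conclude $t_i+t_{i+1}\equiv s$ you would need to already know $t'_j+t'_{j+1}\equiv s'$, and neither the degenerate case $s\equiv s'$ nor Proposition~\ref{lem6}(b) (which only reduces $s$ modulo $p^m\geq r\geq p^e$, not modulo $p^e$) supplies an anchor for the induction.

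The paper's proof is two lines and rests on an ingredient you did not identify: by \cite{GPX2}*{Theorem~2}, if $p^e$ divides $\lcm(r,s)$ (which it does, since $p^e\mid r$) then $p^e$ divides $\gcd(\lambda_1,\dots,\lambda_r)$. With that, \eqref{eq4} gives directly
\[
  n^{\pi(r,s,p)}=r+s+1-n-\lambda_n\equiv s+1-n\pmod{p^e}.
\]
So the missing idea is precisely the divisibility of all parts of $\lambda(r,s,p)$ by the common $p$-part of $r$ and $s$; without importing that result (or reproving it), your argument does not close.
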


\begin{proof}
Let $\lambda_1\ge\cdots\ge\lambda_r>0$ be the parts of $\lambda(r,s,p)$.
From $p^e\mid r$, we have that $p^e$ divides $\lcm(r,s)$, and hence
$p^e$ divides $\gcd(\lambda_1,\dots,\lambda_r)$ by \cite{GPX2}*{Theorem~2}.
Thus by \eqref{eq4},
\[
  n^{\pi(r,s,p)}=r+s+1-n-\lambda_n\equiv s+1-n\pmod{p^e}\qquad\text{for each } n\in[r].\qedhere
\]
\end{proof}

\subsection{Explicit values of Norman permutations for small parameters}

We next describe $\pi(r,s,p)$ for certain small values of $r$ and certain  congruence classes of $s$. First we record some
results from \cite{GPX1} for $r\leq3$.

\begin{lemma}\label{pismall}
Let $r\in\{1, 2,3\}$, $s\geq r$, and let $p$ be a prime. Then $\pi(1,s,p)=()$, 
\[
  \pi(2,s,p)=\begin{cases}
              (1,2)&\textup{if $s_{\bmod p} = 0$}\\
              ()&\textup{otherwise,}
             \end{cases}
\]
and the values of $\pi(3,s,p)$ are as in Table~$\ref{tabr3}$,  where $e=1$ for odd $p$, and $e=2$ for $p=2$.
\end{lemma}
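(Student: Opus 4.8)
The plan is to prove \textbf{Lemma~\ref{pismall}} by specialising the earlier machinery to the cases $r\in\{1,2,3\}$, using the "formula" of Theorem~\ref{Prop2}(d) together with known small-parameter computations of $\lambda(r,s,p)$. The case $r=1$ is immediate: $\pi(1,s,p)$ is a permutation of the one-element set $[1]$, hence the identity. For $r=2$, by Theorem~\ref{Prop2} the permutation $\pi(2,s,p)$ is a product of reversals indexed by $T=\{i\in[1]\mid\delta_i=1\}\subseteq\{1\}$, so either $\pi(2,s,p)=\Rev(1,2)=(1,2)$ (when $T=\emptyset$, i.e.\ $\delta_1=0$) or $\pi(2,s,p)=()$ (when $T=\{1\}$, i.e.\ $\delta_1=1$). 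It remains to identify when $\delta_1=0$, i.e.\ when $D_1(2,s)\equiv 0\pmod p$. By \eqref{eq1}, $D_1(2,s)=\binom{s-1}{s-1}^{-1}\binom{s-1+0}{s-1}\cdot\text{(the }n=1\text{ term)}$; more directly the $(1\times 1)$ determinant is $\binom{s+r-2n}{s-n+i-j}=\binom{s}{s-1}=s$, so $\delta_1=0$ iff $p\mid s$, iff $s_{\bmod p}=0$. This gives the stated formula for $\pi(2,s,p)$.

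For $r=3$ the plan is to invoke the classification of $\lambda(3,s,p)$ already established in \cite{GPX1} (the lemma explicitly says "we record some results from \cite{GPX1} for $r\leq 3$"), and translate each congruence class of $s$ into a permutation via Definition~\ref{D:pi}, equation~\eqref{eq4}. Concretely, $\pi(3,s,p)$ is a product of reversals indexed by $T\subseteq[2]=\{1,2\}$, so it is one of the four permutations $\pi_\emptyset=\Rev(1,3)=(1,3)$, $\pi_{\{1\}}=\Rev(2,3)=(2,3)$, $\pi_{\{2\}}=\Rev(1,2)=(1,2)$, or $\pi_{\{1,2\}}=()$. Using Theorem~\ref{Prop2}(d), each case is pinned down by the two bits $(\delta_1,\delta_2)=(\delta_1(3,s,p),\delta_2(3,s,p))$, which in turn are controlled by the divisibility of $D_1(3,s)$ and $D_2(3,s)$ by $p$. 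One computes $D_1(3,s)=\binom{s+1}{s-1}=\binom{s+1}{2}=\tfrac12 s(s+1)$ and, from \eqref{eq1}, $D_2(3,s)=\tfrac{\binom{s-1}{s-2}\binom{s}{s-2}}{\binom{s-2}{s-2}\binom{s-1}{s-2}}=\tfrac{\binom{s}{2}}{1}=\tfrac12 s(s-1)$ (one should double-check these using the product formula \eqref{eq1}). The residues of these quantities modulo $p$ depend only on $s_{\bmod p}$ (for odd $p$) or $s_{\bmod 4}$ (for $p=2$, because of the factor of $2$ in $\binom{\cdot}{2}$ interacting with the prime $2$), which is exactly why the statement carries the parameter $e$ with $e=1$ for odd $p$ and $e=2$ for $p=2$, and why Table~\ref{tabr3} tabulates $\pi(3,s,p)$ against congruence classes of $s$ modulo $p^e$.

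The remaining work is bookkeeping: list each residue class of $s$ modulo $p^{e}$, determine $(\delta_1,\delta_2)$ from whether $p$ divides $D_1(3,s)$ and $D_2(3,s)$, read off $T$, and record the corresponding reversal product — checking that the outcome matches Table~\ref{tabr3}. One subtlety to handle carefully is the small primes: for $p=2$ the quantities $\tfrac12 s(s\pm1)$ require tracking $2$-adic valuations rather than mere residues mod $2$, which is the source of the $p^e=4$ modulus; for $p=3$ one must separately check the boundary case $r=p$ against line~2 of Table~\ref{tab3} for consistency with Theorem~\ref{newBarry-main}. I expect the main (though modest) obstacle to be verifying the $p=2$ column of Table~\ref{tabr3} correctly, since there the naive "reduce $D_n$ mod $p$" heuristic fails and one must instead argue via $2$-adic valuations (or simply quote the explicit $\lambda(3,s,2)$ from \cite{GPX1}); everything else is a direct, finite translation from $\delta$-values to reversals.
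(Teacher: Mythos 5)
Your proposal is correct, and the key computations check out: $D_1(2,s)=\binom{s}{s-1}=s$, $D_1(3,s)=\binom{s+1}{2}$ and $D_2(3,s)=\binom{s}{2}$ (the product formula \eqref{eq1} indeed gives $\binom{s}{2}$ for $D_2(3,s)$, as does the direct $2\times2$ determinant $(s-1)^2-\binom{s-1}{2}$). Running the residues through Theorem~\ref{Prop2}(b) then reproduces Table~\ref{tabr3}: e.g.\ for odd $p$, $\delta_1=0$ iff $s\equiv 0$ or $-1\pmod p$ and $\delta_2=0$ iff $s\equiv 0$ or $1\pmod p$, giving $T=\emptyset,\{1\},\{2\},\{1,2\}$ and hence $(1,3),(2,3),(1,2),()$ in the four rows; for $p=2$ the parity of the integers $\binom{s+1}{2}$ and $\binom{s}{2}$ depends on $s_{\bmod 4}$, exactly as you anticipate, which accounts for $e=2$. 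However, your route is genuinely different from the paper's. The paper does not touch the determinants $D_n$ at all here: it reads the explicit partitions $\lambda(r,s,p)$ for $r\le 3$ off Table~1 of \cite{GPX1} and converts them via Definition~\ref{D:pi} and \eqref{eqelambda}, obtains the $s\equiv -1$ row from the duality in Proposition~\ref{lem6}(c), and disposes of the ``otherwise'' row by quoting standardness results (\cite{GPX1}*{Theorem 2} for odd $p$, Table~1 for $p=2$). Your approach is more self-contained and uniform — it handles every residue class by the same mechanism and needs only Theorem~\ref{Prop2}(b) rather than three separate external facts — at the cost of a small amount of binomial-coefficient bookkeeping that the paper's citations let it skip. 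The one piece you defer (``the remaining work is bookkeeping'') really is routine given the values of $D_1$ and $D_2$ you computed, so I see no gap, but a complete write-up should include that residue-by-residue table, particularly the $p=2$ column where the relevant quantity is the parity of the integer $\binom{s\pm 1}{2}$ rather than a residue of $s(s\pm1)/2$ modulo $2$.
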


\begin{table}[!ht]
\caption{Values of $\pi(3,s,p)$ for Lemma~\ref{pismall}.}\label{tabr3}
\footnotesize
\begin{tabular}{cc} \toprule
$s_{\bmod p^e}\quad$ & $\quad\pi (3,s,p)$   \\ \midrule
$0$			& $(1,3)$  \\
$1$			& $(2,3)$  \\
$-1$		& $(1,2)$  \\
otherwise	& $()$  \\ \bottomrule
\end{tabular}
\end{table}

\begin{proof}
The values for $\pi(r,s,p)$, where $r\leq2$, and where $(r, s_{\bmod p^e}) = (3, 0), (3, 1)$, 
 all follow from~\cite{GPX1}*{Table~1}, Definition~\ref{D:pi}, and \eqref{eqelambda}.
The values for $(r, s_{\bmod p^e})$ equal to $(3, -1)$ then follow from Proposition~\ref{lem6}(c). 
For $r=3$,  and $s_{\bmod p^e}$ lying between $2$ and $p-2$, the 
corresponding Jordan partition $\lambda(3,s,p)$ is standard, by \cite{GPX1}*{Theorem 2} for odd $p$, 
and by~\cite{GPX1}*{Table~1} for $p=2$.  Hence  in these cases $\eps(3,s,p) = (2,0,-2)$ 
and $\pi(3,s,p)=()$.
\end{proof}

Our next result determines $\pi(r,s,p)$ for all $r$, for certain congruences of $s$ modulo a $p$-power $p^m\geq r$. 
It is a consequence of Propositions~3, 13 and 14
of~\cite{GPX1}, and Definition~\ref{D:pi}.

\begin{proposition}\label{prop1}
  Let $1\le r\le s$, let $p$ be a prime, and let $m$ be such that $r\leq p^m$. 
If  $s_{\bmod p^m}$ is as in the second column of Table~$\ref{tabssmall}$,
then $\pi(r,s,p)$ is given by the third column of Table~$\ref{tabssmall}$.
\end{proposition}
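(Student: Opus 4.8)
The plan is to reduce everything to known results about the deviation vector $\eps(r,s,p)$ (equivalently, about $\lambda(r,s,p)$) together with the dictionary of Section~\ref{sec3} and Definition~\ref{D:pi}. Recall that, by \eqref{eq4} and \eqref{eqelambda}, we have $n^{\pi(r,s,p)} = r+1-n-\eps_n$, so it suffices to compute the deviation vector $\eps(r,s,p)$ for each congruence class of $s_{\bmod p^m}$ listed in the second column of Table~\ref{tabssmall}, and then translate the resulting $r$-tuple back to a product of reversals using \eqref{eq7} (or Theorem~\ref{prop3}(b)). The hypothesis $r\le p^m$ is exactly the hypothesis under which the periodicity result Proposition~\ref{lem6}(b) applies, so throughout we may freely replace $s$ by $s_{\bmod p^m}$ (adjusting only if $s_{\bmod p^m} < r$, in which case we instead use a representative $\ge r$ in the same residue class mod $p^m$); this is why the table can be stated purely in terms of $s_{\bmod p^m}$.

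First I would handle the ``generic'' residues, i.e.\ those for which $\lambda(r,s,p)$ is standard: by \cite{GPX1}*{Theorem~2} (or our extension cited in Section~\ref{sec:back}) the partition is standard whenever $s_{\bmod p^m}$ avoids a short list of exceptional classes near $0$, $1$, and $-1$, and then $\eps_n = r+1-2n$ for all $n$, so $\pi(r,s,p)=()$ by Theorem~\ref{prop3}(b). Next I would treat the exceptional classes one at a time. For $s_{\bmod p^m}=0$ one invokes Lemma~\ref{lem8} with $p^e = p^m$ (legitimate since here one can take $s$ divisible by $p^m$), giving $n^{\pi(r,s,p)}\equiv 1-n \pmod{p^m}$, and combined with $n^{\pi(r,s,p)}\in[r]\subseteq\{1,\dots,p^m\}$ this forces $n^{\pi(r,s,p)} = r+1-n$ when $1\le n\le r$ — wait, more precisely one gets $n^{\pi}\equiv s+1-n$, and with $s\equiv 0$ and the range constraint this pins down $\pi(r,s,p)=\Rev(1,r)$. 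For the classes $s_{\bmod p^m}\in\{1,-1\}$ I would appeal directly to the cited results \cite{GPX1}*{Propositions~3, 13, 14}, which describe $\lambda(r,s,p)$ (equivalently $\eps$) in these cases; plugging the stated formula for $\eps_n$ into \eqref{eq7} yields a single reversal (shifted to the appropriate end of $[r]$), matching the table — the $s\equiv-1$ case can alternatively be derived from the $s\equiv 1$ case via Proposition~\ref{lem6}(c), conjugating by $\Rev(1,r)$. There may be one or two further boundary residues in the table (depending on whether $r$ is close to $p^m$, or on small $p$); each of these is dispatched the same way, by reading off $\eps$ from the cited propositions and applying \eqref{eq7}.

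The main obstacle is bookkeeping rather than conceptual: the cited results of \cite{GPX1} are phrased in terms of $\lambda$ and its deviation vector over several overlapping ranges, and one must check that the case division in our Table~\ref{tabssmall} exactly covers all residues $s_{\bmod p^m}$ with no overlaps and no gaps, and that the translation via \eqref{eq4} sends each described $\eps$-pattern to precisely the reversal (or product of reversals, or identity) claimed in the third column. In particular one must be careful at the endpoints $n=1$ and $n=r$ of $[r]$, where \eqref{eq3} is tight, to confirm that the reversal produced really does have the stated interval of support and is not accidentally a fixed point $\Rev(t,t)$; this is where the explicit small-$r$ checks of Lemma~\ref{pismall} serve as a useful sanity test. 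Once the correspondence is verified for each residue class, the proposition follows immediately.
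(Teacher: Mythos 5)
Your proposal has a genuine gap, and it begins with a misreading of what the proposition actually asserts. Table~\ref{tabssmall} has exactly four rows, for $s_{\bmod p^m}=0,1,2,3$; there is no ``otherwise $\Rightarrow$ identity'' row and no $s\equiv-1$ row. So the bulk of your argument (the ``generic'' standard residues, and the $s\equiv-1$ case via Proposition~\ref{lem6}(c)) addresses cases that are not in the statement, while the cases $s\equiv 2$ and $s\equiv 3\pmod{p^m}$ --- which carry the real content, including the dichotomy $(1,2)\circ\Rev(3,r)$ versus $\Rev(3,r)$ according as $r_{\bmod p}=0$ or not --- are waved away as ``one or two further boundary residues.'' The key idea you are missing is the reduction formula of Proposition~\ref{T:s1}(a): for $s=s_0p^m+s_1$ with $1\le s_1<r<p^m$ one has $\pi(r,s,p)=\pi(s_1,r,p)\circ\Rev(s_1+1,r)$, where the roles of $r$ and $s$ are swapped in the small factor. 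Feeding $s_1=1,2,3$ into this and then evaluating $\pi(2,r,p)$ and $\pi(3,r,p)$ via Lemma~\ref{pismall} is precisely what produces rows 1--3 of the table and, in particular, the dependence on $r_{\bmod p}$ in row 2. A plan based only on ``reading off $\eps$ from \cite{GPX1}*{Propositions 3, 13, 14} and applying \eqref{eq7}'' cannot see that dependence without effectively re-proving this reduction.

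Your treatment of the $s\equiv 0$ case is also broken. Lemma~\ref{lem8} has hypothesis $p^e\mid r$, not $p^e\mid s$, so you cannot invoke it with $p^e=p^m$ unless $r=p^m$. Moreover, even granting the congruence $n^{\pi}\equiv s+1-n\equiv 1-n\pmod{p^m}$, the range constraint $n^{\pi}\in[r]\subseteq[1,p^m]$ would force $n^{\pi}=p^m+1-n$, which lies outside $[r]$ for $n\le p^m-r$; this agrees with $\Rev(1,r)$, i.e.\ with $n^\pi=r+1-n$, only when $r=p^m$. The correct route here (and the one the paper takes) is to quote \cite{GPX1}*{Proposition~3} directly: when $s\equiv 0\pmod{p^m}$ the deviation vector $\eps(r,s,p)$ is the zero vector, and then \eqref{eq7} gives $n^{\pi}=r+1-n$, i.e.\ $\pi(r,s,p)=\Rev(1,r)$. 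Your overall framework (reduce to $\eps$ and translate by \eqref{eq7}, using Proposition~\ref{lem6}(b) to normalise $s$ modulo $p^m$) is sound as far as it goes, but without Proposition~\ref{T:s1}(a) and Lemma~\ref{pismall} it does not prove the proposition as stated.
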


\begin{table}[!ht]
\caption{Norman involutions for Proposition~\ref{prop1} where $s\equiv 0, 1, 2,3\pmod{p^m}$.}\label{tabssmall}
\footnotesize
\begin{tabular}{cccc} \toprule
\textup{Case}&	$s_{\bmod p^m}$ & $\pi(r,s,p)$  & Conditions on $r,s$  \\ \midrule
$0$	& 0					& $\Rev(1,r)$	&  $1\le r\le s$  \\
$1$	& 1					& $\Rev(2,r)$	&  $2\le r\le s$  \\
$2$	& 2					& $(1,2)\circ\Rev(3,r)$		&  $3\le r\le s$ and $r_{\bmod p} = 0$  \\
       &  					& $\Rev(3,r)$	&  $3\le r\le s$ and $r_{\bmod p} \ne 0$ \\
$3$	& 3					& $\pi(3,r,p)\circ\Rev(4,r)$	& $4\le r\le s$  \\ \bottomrule
\end{tabular}
\end{table}

\begin{proof}
It follows from Lemma~\ref{lem6}~(b) that we may assume that
$m=\lceil\log_pr\rceil$.  In   Case~0, it follows from~\cite{GPX1}*{Proposition~3}
that $\eps(r,s,p)$ is the zero vector and hence that $\pi(r,s,p)=\Rev(1,r)$. 
Case~1 follows  from Proposition~\ref{T:s1}(a) as $\pi(1,r,p)\in\Sy_1$ is trivial.
  In Case~2, we have $3\le r\le s$ and $s\equiv2\pmod{p^m}$. Therefore, by 
  Proposition~~\ref{T:s1}(a),   $\pi(r,s,p)=\pi(2,r,p)\circ\Rev(3,r)$,
  and by  Lemma~\ref{pismall}, $\pi(r,s,p)$ is as in Table~\ref{tabssmall}. 
  Similarly, in Case~3, we have $4\le r\le s$ and $s\equiv3\pmod{p^m}$, and hence
  $\pi(r,s,p)=\pi(3,r,p)\circ\Rev(4,r)$ by Proposition~\ref{T:s1}(a).
\end{proof}

\begin{remark}
(a) For case $3$ of Table~\ref{tabssmall} we can determine $\pi(r,s,p)$ precisely using the values for 
$\pi(3,r,p)$ from Table~\ref{tabr3}, but we refrained from expanding this line of the Table~\ref{tabssmall} into four lines.

(b) We can also determine $\pi(r,s,p)$ when $s\equiv -1,-2,-3\pmod {p^m}$.
To do this we use Proposition~\ref{lem6}(c). For example, if   $s\equiv -2\pmod {p^m}$, then
this application yields
$\pi(r,s,p)=\Rev(1,r-2)\circ\pi(2,r,p)^{\Rev(1,r)}$.
\end{remark}

Finally in this subsection we determine in Proposition~\ref{prop5} several kinds of Norman involutions which 
we use in the final section to prove Theorem~\ref{T:wreath}. It requires the following information 
from the Green ring.

\begin{lemma}\label{green}
Let $p$ be a prime and let $b=p^e\geq 1$.
\begin{enumerate}[{\rm (a)}]
  \item For $b\geq1$, $V_b\otimes V_1 = V_b$, and for $b>1$, 
  $V_{b+1}\otimes V_b = V_{2b} \oplus (b-1) V_b$.
  \item If $r$ has $p$-part $b>1$, then 
  $V_{b+1}\otimes V_r = V_{r+b} \oplus (b-1) V_r\oplus V_{r-b}$.
\end{enumerate}
\end{lemma}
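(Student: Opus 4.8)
The plan is to derive everything from known facts about tensor products of unipotent Jordan blocks over a field of characteristic $p$, using the Green ring identities $V_a\otimes V_b=\bigoplus_i V_{\lambda_i}$ with $\lambda=\lambda(a,b,p)$, together with the partition formulas available to us. For part~(a), the identity $V_b\otimes V_1=V_b$ is immediate since $V_1$ is the trivial module, which is the identity of the Green ring. For $V_{b+1}\otimes V_b$ with $b=p^e>1$, I would invoke Proposition~\ref{lem6}(a): here $r=b$, $s=b+1$, and we need $s\le p\le r+s-2$, i.e. $b+1\le p\le 2b-1$. This holds precisely when $e=1$ (so $b=p$); for $e\ge2$ one instead appeals to the periodicity in Proposition~\ref{lem6}(b) or directly to Renaud's/the $p^\ell$-multiple results. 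A cleaner route: apply Proposition~\ref{prop1}, Case~1, with the roles arranged so that $s\equiv 1\pmod{p^m}$ where $p^m$ is the least $p$-power $\ge b+1$. Taking $r=b+1$ and $s$ in the appropriate class gives $\pi=\Rev(2,r)=\Rev(2,b+1)$, and Theorem~\ref{Prop2}(d) then yields $\lambda_1=2b$, $\lambda_2=\cdots=\lambda_b=b$, which is exactly $V_{2b}\oplus(b-1)V_b$. One must check $b+1\le p^m$ and that the relevant $s$ satisfies $r\le s$; since $b=p^e$, we have $p^m=p^{e+1}$ works and $b+1\le p^{e+1}$ always holds.

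For part~(b), suppose $r$ has $p$-part $b=p^e>1$, and consider $V_{b+1}\otimes V_r$. The natural approach is Proposition~\ref{T:s1}(a): write $r$ in the form that makes $b+1$ play the role of the "small" factor, or more directly observe that $V_{b+1}\otimes V_r=V_r\otimes V_{b+1}$ and apply the decomposition with parameters $(r, b+1)$ where now the smaller dimension is $b+1<r$ (assuming $r>b+1$; the case $r=b$ is not in scope since then $b+1>r$ and one uses commutativity with part~(a) after a periodicity reduction). Here I would use that $s:=b+1$ satisfies $s\equiv 1\pmod{p^m}$ for a suitable $p^m\ge r$ — but this requires $b+1\equiv 1$, i.e. $b\equiv 0\pmod{p^m}$, which forces $p^m\mid b$, only possible if $m\le e$, conflicting with $p^m\ge r>b=p^e$. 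So this shortcut fails, and instead I would use the $p^\ell$-multiple result. Writing $r=p^e r'$ with $\gcd(r',p)=1$ and $r'>1$ (if $r'=1$ then $r=b$ and part~(b) reduces to part~(a) via $V_{b+1}\otimes V_b$), I would compute $\lambda(p^e r', p^e\cdot\frac{b+1}{b}\cdot\text{(something)},p)$ — but $b+1$ is not a multiple of $p^e$, so Proposition~\ref{prop4} does not apply directly either. The honest route is therefore Lemma~\ref{green}(a) combined with a Green-ring "divisibility/multiplicativity" argument: since $b=p^e$ divides $\gcd$ of the parts in the relevant decompositions (cf.\ the $\gcd$ result cited in the proof of Lemma~\ref{lem8}, namely \cite{GPX2}*{Theorem~2}), one reduces $V_{b+1}\otimes V_r$ to a $p^e$-scaled copy of $V_{p+1}\otimes V_{r/p^{e}\cdot p}$-type data, then reads off parts $r+b$, $r$ (multiplicity $b-1$), $r-b$.

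Concretely, the cleanest self-contained derivation of~(b) uses Theorem~\ref{Prop2}(d): it suffices to determine $L(n)$ and $R(n)$ for the pair $(b+1,r)$ (i.e.\ with $J_{b+1}\otimes J_r$), equivalently the set $T=\{i\in[b]\mid \delta_i=1\}$. One shows, using the binomial-determinant formula \eqref{eq1} and Kummer/Lucas-type analysis of $D_n(b+1,r)\bmod p$ together with $p^e\mid r$, that $\delta_n=1$ exactly for $n\in\{0,1,b,b+1\}$ (and $\delta_n=0$ for $2\le n\le b-1$) — this is where the hypothesis that the $p$-part of $r$ is exactly $b>1$ enters decisively. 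Granting that, Theorem~\ref{Prop2} gives $\pi(b+1,r,p)=\Rev(1,2)\circ\Rev(2,b)\circ\Rev(b,b+1)$ acting as the product whose associated partition has $\lambda_1=r+b$, $\lambda_2=\cdots=\lambda_b=r$, $\lambda_{b+1}=r-b$, i.e.\ $V_{r+b}\oplus(b-1)V_r\oplus V_{r-b}$, as claimed.

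\textbf{Main obstacle.} The genuinely delicate step is the modular evaluation of the binomial determinants $D_n(b+1,r)$ (equivalently, pinning down the set $T$, or the functions $L,R$) for the pair $(b+1,r)$ when $r$ has $p$-part exactly $b=p^e>1$. Everything else is bookkeeping with reversals via Theorem~\ref{Prop2}, but establishing $\delta_n=0$ for the intermediate range $2\le n\le b-1$ and $\delta_n=1$ at the endpoints $n=1,b$ requires a careful carry-counting argument on the ratio in \eqref{eq1}; alternatively one sidesteps it entirely by quoting the precise small-dimension decompositions already in \cite{GPX1} or \cite{GPX2} and then applying the $p^\ell$-multiple Proposition~\ref{prop4} or the periodicity in Proposition~\ref{lem6}(b), which I expect to be the route actually taken.
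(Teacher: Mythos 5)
Your part (a) is essentially correct, though you should be careful with the ordering convention: all of the cited results assume $r\le s$, so the computation must be set up as $\pi(b,b+1,p)$ with $r=b$, $s=b+1$ and $m=e$ (so $s_{\bmod p^m}=1$), not with $r=b+1$. With that fix, Proposition~\ref{prop1} (Case~1) together with \eqref{eq4} does give $\lambda(b,b+1,p)=(2b,b,\dots,b)$; this is a legitimate alternative to the paper's route, which instead evaluates Renaud's algorithm \cite{GPX2}*{Proposition~6} directly with $(r_0,r_1,s_0,s_1)=(1,0,1,1)$, splitting into the cases $p>2$ and $p=2$.

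Part (b), however, has a genuine gap. After discarding several approaches, you reduce the claim to the assertion that, for the pair $(b+1,r)$ with $r_p=b>1$, one has $\delta_n=1$ exactly for $n\in\{0,1,b,b+1\}$ and $\delta_n=0$ for $2\le n\le b-1$ --- and then you write ``granting that'' and flag the modular evaluation of the determinants $D_n(b+1,r)$ in \eqref{eq1} as the main obstacle. But that assertion \emph{is} the content of part (b); deferring it means the statement is not proved. (There is also a slip in the bookkeeping: the permutation associated to $T=\{1,b\}$ via \eqref{eq6} is $\Rev(1,1)\,\Rev(2,b)\,\Rev(b+1,b+1)=\Rev(2,b)$, fixing $1$ and $b+1$, not $\Rev(1,2)\circ\Rev(2,b)\circ\Rev(b,b+1)$; your stated partition $(r+b,r,\dots,r,r-b)$ is nevertheless the correct target.) The paper avoids the carry-counting entirely by a duality trick you did not find: since the target deviation vector $(b,0,\dots,0,-b)$ equals its own negative reverse, \cite{GPX1}*{Theorem~4} lets one replace $r=ab$ by any $r'=a'b$ with $b+1<r'$ and $r+r'\equiv 0\pmod{p^{e+1}}$; choosing $2\le a'\le p+1$ with $a+a'\equiv 0\pmod p$ (so $a'\ne p$) reduces everything to the three explicit computations $V_{b+1}\otimes V_{a'b}$ for $2\le a'\le p-2$, $a'=p-1$ and $a'=p+1$, each read off from \cite{GPX2}*{Proposition~6}. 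Your closing guess that the paper quotes small cases and then applies the $p^\ell$-multiple Proposition~\ref{prop4} is not quite right --- as you yourself observed, $b+1$ is not divisible by $p$, so Proposition~\ref{prop4} cannot be applied to this pair; the reduction is via the negative-reverse duality instead.
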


\begin{proof}
(a) Since $\lambda(1,b,p)$ has only one part, it equals $(b)$ and so  $V_1\otimes V_b = V_b=V_b\otimes V_1$
holds for each $b\geq1$.
For the second equality assume that $b>1$. Since $b<b+1 < p^{e+1}$, we may apply Renaud's algorithm  as stated
  in~\cite{GPX2}*{Proposition~6} with  $r=r_0p^e+r_1$, $s=s_0p^e+s_1$,
  where $(r_0,r_1,s_0,s_1)=(1, 0, 1, 1)$. 
Now $V_{r_1}\otimes V_{s_1} = V_0\otimes V_1 = V_0$ by the first equality just proved. If $p>2$, then in~\cite{GPX2}*{Proposition~6} we have $(c,d_1,d_2) = (0, 1, 1)$ and we obtain 
$V_{b}\otimes V_{b+1} = V_{2p^e} \oplus (p^e-1) V_{p^e} = V_{2b} \oplus (b-1) V_{b}$.
If $p=2$, then  $(c,d_1,d_2) = (1, 0, 1)$ and~\cite{GPX2}*{Proposition~6}
gives the desired decomposition.

(b) Write $r=ab$ with $b=r_p$, and $a$ coprime to $p$. The case $a=1$ follows from part (a), as $V_0=0$. Suppose now that $a>1$. Since $b>1$ we have $b+1<p^e+1$ and $b+1<r$. To prove part (b) it is sufficient to 
prove that $\lambda(b+1,r,p)$ is the $(b+1)$-tuple $(r+b, r, \dots, r, r-b)$, or equivalently, by \eqref{eqelambda},
that the deviation vector $\eps(b+1,r,p)$  is the $(b+1)$-tuple $(b, 0, \dots, 0, -b)$.
Note that  $(b, 0, \dots, 0, -b)$ is equal to its `negative reverse' (see~\cite{GPX1}*{Definition~1}).
If $r'$ is such that $b+1<r'$ and $r+r'\equiv 0\pmod{p^{e+1}}$, then by \cite{GPX1}*{Theorem 4}, 
 $\eps(b+1,r',p)$ is the negative reverse of  $\eps(b+1,r,p)$. Thus to prove part (b) it is sufficient 
 to prove that  $\eps(b+1,r',p) = (b, 0, \dots, 0, -b)$. 
 Since $r_{\bmod p^{e+1}} = (ab)_{\bmod bp} = (a_{\bmod p}) b$, we may take $r'=a'b$ 
 where $2\leq a'\leq p+1$ and $a+a'\equiv 0\pmod{p}$. As $a_{\bmod p}\ne 0$, it follows that $a'\ne p$.
 
  We claim that  
  $V_{b+1}\otimes V_{a'b} = V_{(a'+1)b}\oplus(b-1)V_{a'b}\oplus V_{(a'-1)b}$
  for $2\leq a'\le p+1$.
  We consider three cases: (i) $2\le a'\le p-2$, (ii) $a'=p-1$, 
  and (iii) $a'=p+1$.

  {\sc Case}~(i) $2\le a'\le p-2$. Here we use
  Renaud's algorithm~\cite{GPX2}*{Proposition~6} with $b+1=r_0 p^e+r_1$ and $a'b=s_0 p^e+s_1$
  where $(r_0,r_1,s_0,s_1)=(1,1,a',0)$. In this case
  $V_{r_1}\otimes V_{s_1}=V_1\otimes V_0=V_0$, so the parameter $\ell$ in
  ~\cite{GPX2}*{Proposition~6} is  $0$. Since $r_0+s_0=1+a'<p$,
  the triple $(c,d_1,d_2)$ equals $(0,r_0,r_0)=(0,1,1)$ in the notation
  of~\cite{GPX2}*{Proposition~6}, and the desired decomposition holds.

  {\sc Case}~(ii) $a'=p-1$. Here we have the same choices for $(r_0,r_1,s_0,s_1)$,
  but in this case $r_0+s_0=p$, so $(c,d_1,d_2)=(1,0,1)$ and
  $V_{b+1}\otimes V_{(p-1)b}=V_{pb}\oplus(b-1)V_{(p-1)b}\oplus V_{(p-2)b}$, as desired.

  {\sc Case}~(iii) $a'=p+1$. In this case $a'b = (p+1)p^e < p^{e+2}$ and we compute   
 $V_{b+1}\otimes V_{a'b}$ using Renaud's algorithm applied to the decompositions
 $b+1=0\cdot p^{e+1} + (b+1)$ and $a'b= 1\cdot p^{e+1} + p^e$, so the parameters 
 of~\cite{GPX2}*{Proposition~6} are   $(r_0,r_1,s_0,s_1)=(0,b+1, 1, b)$
 and $(c, d_1, d_2) = (0,0,0)$. Also  
  $V_{r_1}\otimes V_{s_1}=V_{b+1}\otimes V_{b}$ which 
  by part (a) is equal to $V_{2b}\oplus (b-1) V_b$.
Therefore by~\cite{GPX2}*{Proposition~6},
\[
V_{b+1}\otimes V_{(p+1)b}=V_{p^{e+1}}\oplus V_{p^{e+1}+2b}\oplus (b-1)V_{p^{e+1}+b}
= V_{(p+2)b}\oplus (b-1) V_{(p+1)b}\oplus V_{pb}.
\]
  This completes the proof that  $V_{b+1}\otimes V_{a'b}=V_{(a'+1)b}\oplus(b-1)V_{a'b}\oplus V_{(a'-1)b}$ for $2\le a'\le p+1$. 
  It follows from equation~\eqref{eqelambda}, that 
 $\eps(b+1,r',p) = (b, 0, \dots, 0, -b)$. However, as explained above
 $\eps(b+1,r',p) = \eps(b+1,r,p)$, and so part~(b) now follows.  
 \end{proof}

\begin{proposition}\label{prop5}
  Let $p$ be a prime and let $r$ be a positive integer with $p$-part $b$
  where $1<b<r$. Let $m$ be such that $r\leq p^m$. Then 
  \begin{align*}
  \pi(r,p^m+b,p)	&=\Rev(1,b)\,\Rev(b+1,r),	\\
  \pi(r,p^m+2b,p) 	&=\Rev(1,b)\,\Rev(b+1,2b)\,\Rev(2b+1,r)&&\textup{if $2b<r$,}	\\
  \pi(r,p^m+b+1,p)	&=\Rev(2,b)\,\Rev(b+2,r).&&	
  \end{align*}
  Moreover, in the Green ring,
   \begin{equation}\label{E:prop5}
    V_r\otimes V_{p^m+b+1}=V_{p^m+r+b}\oplus(b-1)V_{p^m+r}\oplus V_{p^m+r-b}
                          \oplus(r-b-1)V_{p^m}.
  \end{equation} 
\end{proposition}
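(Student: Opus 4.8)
The plan is to reduce each claimed value of $\pi(r,s,p)$ to the reduction formula of Proposition~\ref{T:s1}(a), together with the small-parameter values computed in Lemmas~\ref{pismall} and~\ref{green}. Throughout, write $r=ab$ with $a=r_{p'}$, so $1<b<r$ forces $a\ge 2$; we may also assume $m=\lceil\log_p r\rceil$ by Proposition~\ref{lem6}(b), since adding multiples of $p^m$ to $s$ changes nothing. The three values of $s$ of interest are $s=p^m+b$, $s=p^m+2b$, and $s=p^m+b+1$, which in the notation of Proposition~\ref{T:s1}(a) correspond to $s_0=1$ and $s_1=b$, $s_1=2b$, $s_1=b+1$ respectively (note $s_1<r$ in each case, using $2b<r$ for the middle one, and $b+1\le r$ with $b+1<r$ since $a\ge 2$ and $b>1$).

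First I would handle $s=p^m+b+1$. Here Proposition~\ref{T:s1}(a) gives $\pi(r,p^m+b+1,p)=\pi(b+1,r,p)\circ\Rev(b+2,r)$, so it remains to show $\pi(b+1,r,p)=\Rev(2,b)$ inside $\Sy_{b+1}$. By Lemma~\ref{green}(b), $V_{b+1}\otimes V_r=V_{r+b}\oplus(b-1)V_r\oplus V_{r-b}$, so $\lambda(b+1,r,p)=(r+b,r,\dots,r,r-b)$ (with $b-1$ copies of $r$ in the middle); equivalently the deviation vector is $\eps(b+1,r,p)=(b,0,\dots,0,-b)$. Feeding this into \eqref{eq4} (with the roles $r\mapsto b+1$, $s\mapsto r$) gives $1^\pi=1$, $(b+1)^\pi=b+1$, and $n^\pi=(b+2-n)$ for $2\le n\le b$, i.e.\ $\pi(b+1,r,p)=\Rev(2,b)$. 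Multiplying by $\Rev(b+2,r)$ yields the stated formula. The Green-ring identity \eqref{E:prop5} is then immediate: combine $\lambda(b+1,r,p)=(r+b,r,\dots,r,r-b)$ with the multiplicity count coming from $\Rev(b+2,r)$, which contributes $r-(b+1)=r-b-1$ copies of $V_{p^m}$ after shifting by $s_0p^m=p^m$; concretely, $\lambda(r,p^m+b+1,p)=(p^m+r+b,p^m+r,\dots,p^m+r,p^m+r-b,p^m,\dots,p^m)$ with $b-1$ copies of $p^m+r$ and $r-b-1$ copies of $p^m$, which translates directly into \eqref{E:prop5}.

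Next I would do $s=p^m+b$: Proposition~\ref{T:s1}(a) gives $\pi(r,p^m+b,p)=\pi(b,r,p)\circ\Rev(b+1,r)$. To compute $\pi(b,r,p)$, apply Proposition~\ref{prop1} (Case~$0$) with degree $b$: since $b=p^e\mid r$, we have $r\equiv 0\pmod{p^e}$, so $\pi(b,r,p)=\Rev(1,b)$. Hence $\pi(r,p^m+b,p)=\Rev(1,b)\Rev(b+1,r)$. For $s=p^m+2b$ (with $2b<r$), Proposition~\ref{T:s1}(a) gives $\pi(r,p^m+2b,p)=\pi(2b,r,p)\circ\Rev(2b+1,r)$, so I need $\pi(2b,r,p)=\Rev(1,b)\Rev(b+1,2b)$ in $\Sy_{2b}$. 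This follows from Proposition~\ref{prop4} (the $p^\ell$-multiple proposition) applied with $\ell=e$ to $\pi(2,r/b\cdot\text{?},p)$ — more cleanly, note $2b=p^e\cdot 2$ and $r=p^e a$ are both divisible by $p^e$, and by Lemma~\ref{pismall}, $\pi(2,a,p)=(1,2)$ since $a_{\bmod p}\ne 0$ is false — wait, we need $a_{\bmod p}=0$; instead use that $\pi(2,r,p)$ with $b\mid r$ equals $(1,2)$ by Lemma~\ref{pismall} (as $r_{\bmod p}=0$), then Proposition~\ref{prop4} with $\ell=e$ turns $\pi(2,r/p^e\cdot p^e,\dots)$... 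The clean statement: apply Proposition~\ref{prop4} to $\pi(2, r/p^e, p) = \pi(2,a,p)$, but $a$ need not be divisible by $p$. So instead observe directly via Proposition~\ref{T:s1}(a) that $\pi(2b,r,p)=\pi(b,r,p)\circ\Rev(b+1,2b)=\Rev(1,b)\Rev(b+1,2b)$ provided $r$ has a suitable decomposition — actually $\pi(2b,r,p)$ is not directly of the form covered, so I would instead use Proposition~\ref{prop4} with base permutation $\pi(2,a',p)$ where $r=p^e a$: since $2b=p^e\cdot 2$ we have $\pi(2b, p^e s', p)=\Rev(p^e\cdot 0+1,p^e)\Rev(p^e+1,2p^e)$ whenever $\pi(2,s',p)=\Rev(1,1)\Rev(2,2)=()$... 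I will sort out the exact citation in the write-up; the main obstacle is precisely this bookkeeping for $\pi(2b,r,p)$, which I expect to settle by combining Lemma~\ref{green}(a) (giving $V_{2b}$-type decompositions) with the $p^e$-multiple Proposition~\ref{prop4}, exactly as the three-line block at the end of Lemma~\ref{green}'s proof is designed to feed into here.

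Finally, I would assemble the disjoint-cycle forms: in each case the reversal factors act on disjoint intervals $[1,b]$, $[b+1,2b]$ or $[b+1,r]$ (resp.\ $[2,b]$, $[b+2,r]$), so the products are genuine and need no reordering, and the stated equalities in $\Sy_r$ follow. The Green-ring identity \eqref{E:prop5} is only asserted for $s=p^m+b+1$ and was already obtained above. The main obstacle, as noted, is verifying $\pi(2b,r,p)=\Rev(1,b)\Rev(b+1,2b)$ cleanly; everything else is a direct substitution into \eqref{eq4} using deviation vectors supplied by Lemmas~\ref{pismall} and~\ref{green} together with Propositions~\ref{T:s1}, \ref{prop1} and~\ref{prop4}.
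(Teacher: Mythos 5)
Your route is essentially sound and, for the first two identities, genuinely different from the paper's. The paper works ``bottom-up'': it computes $\pi(a,p^\ell+1,p)=\Rev(2,a)$ and $\pi(a,p^\ell+2,p)=\Rev(3,a)$ from Proposition~\ref{prop1} (where $r=ab$, $b=p^e$, $m=e+\ell$), rewrites these with their fixed points as $\Rev(1,1)\Rev(2,a)$ and $\Rev(1,1)\Rev(2,2)\Rev(3,a)$, and then multiplies every index by $p^e$ via Proposition~\ref{prop4} to get the first two displayed equalities in one step each. You instead peel off $s_1$ from the right via Proposition~\ref{T:s1}(a), which reduces each case to a small Norman permutation $\pi(s_1,r,p)$ with $s_1\in\{b,\,2b,\,b+1\}$. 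Your treatments of $s_1=b$ (via Case~0 of Proposition~\ref{prop1}, since $b\mid r$) and of $s_1=b+1$ (via Lemma~\ref{green}(b) and \eqref{eq4}, yielding $\Rev(2,b)$ and the partition behind \eqref{E:prop5}) are correct and complete; the hypotheses $s_1<r<p^m$ of Proposition~\ref{T:s1} do hold here because $b<r$ forces $a\ge2$, so $r$ is not a $p$-power and $r<p^m$. For the third equality your argument is in substance the same as the paper's, which also rests on Lemma~\ref{green}(b) fed into Renaud's algorithm.

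The one step you leave unresolved, $\pi(2b,r,p)=\Rev(1,b)\Rev(b+1,2b)$, does close, and by exactly the last combination you wrote down before hedging: since $a_{\bmod p}\ne0$, Lemma~\ref{pismall} gives $\pi(2,a,p)=()$, which in the canonical form of Theorem~\ref{Prop2} is $\Rev(1,1)\Rev(2,2)$ (corresponding to $T=\{1\}$), and Proposition~\ref{prop4} with $\ell=e$ sends this to $\pi(2p^e,ap^e,p)=\Rev(1,b)\Rev(b+1,2b)$. Your worry that ``$a$ need not be divisible by $p$'' is a non-issue: Proposition~\ref{prop4} places no divisibility condition on the base pair $(r,s)=(2,a)$; the point of Remark~\ref{rem:compute} is precisely that fixed points of the base permutation must be retained as trivial reversals before scaling, which is what produces the two nontrivial blocks here. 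With that citation inserted, your proof is complete; the paper's version is marginally shorter because it never needs $\pi(2b,r,p)$ or $\pi(b,r,p)$ at all, handling both of the first two equalities by a single application of Proposition~\ref{prop4} at level $a$.
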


\begin{proof}
Write $r=ab$ where $a\geq1$ is coprime to $p$ and $b=p^e$. Since $r=ab\leq p^m$, we have $m=e+\ell$ where $a\leq p^\ell$.
By Proposition~\ref{prop1}, $\pi(a,p^\ell+1,p)=\Rev(2,a)$. 
Write $\pi(a,p^\ell+1,p)$ as $\Rev(1,1)\Rev(2,a)$. Then 
Proposition~\ref{prop4} implies that 
\[
  \pi(r, p^m+b,p)=\pi(ab, p^\ell b+b,p)  =\Rev(1,b)\Rev(b+1,r),
\]
establishing the first equality. 

Next, by Proposition~\ref{prop1}, $\pi(a,p^\ell+2,p)=\Rev(3,a)$  since $a_{\bmod p}\ne 0$. 
Then by Proposition~\ref{prop4}, $\pi(r,p^m+2b,p)=\Rev(1,b)\Rev(b+1,2b)\Rev(2b+1,r)$, 
and the second equality is proved.

The third equality will follow from \eqref{E:prop5}, for equation \eqref{E:prop5} is equivalent to 
  \[
    \lambda(r,p^m+b+1,p)=
    (p^m+r+b,\overbrace{p^m+r,\dots,p^m+r}^{b-1},p^m+r-b,\overbrace{p^m,\dots,p^m}^{r-b-1}),
  \]
 and it then follows from \eqref{eq4} that the corresponding Norman permutation $\pi(r,p^m+b+1,p)$
 is equal to $\Rev(2,b)\,\Rev(b+2,r)$, fixing the points $1$ and $b+1$. Thus
 it remains  to prove \eqref{E:prop5}. We do this using Renaud's algorithm as stated
  in~\cite{GPX2}*{Proposition~6} to compute $V_r\otimes V_s$, where $s=p^m+b+1$, 
  from a smaller known decomposition of   $V_{r_1}\otimes V_{s_1}$. Here we 
 have $1\le r\le s\le p^{m+1}$ and we write $r=r_0p^m+r_1$, $s=s_0p^m+s_1$
  where $(r_0,r_1,s_0,s_1)=(0,r,1,b+1)$. We have 
   $V_{r_1}\otimes V_{s_1} = V_{b+1}\otimes V_r = V_{r+b} \oplus (b-1) V_r\oplus V_{r-b}$
by Lemma~\ref{green}(b).  Renaud's algorithm now gives the decomposition \eqref{E:prop5} for $V_r\otimes V_s$,
completing the proof.
 \end{proof}

\section{Trivial Norman permutations: proof of Theorem~\ref{newBarry-main}}
\label{s:triv}

Here we prove Theorem~\ref{newBarry-main} by establishing
the following more general technical result. 

\begin{proposition}\label{newBarry}
  If $1\le r\le s$ and $p$ is a prime, then the following are equivalent:
\begin{enumerate}[{\rm (i)}]
  \item $\lambda(r,s,p)$ is a standard partition;
  \item $\pi(r,s,p)$ is the identity permutation;
  \item $(r,s,p)$ is a standard triple as in \textup{Definition~\ref{stdtriple}};
  \item $L(1)=L(2)=\cdots=L(r)=1$;
  \item $R(1)=R(2)=\cdots=R(r)=0$;
  \item $\delta_1=\delta_2=\cdots=\delta_r=1$.
\end{enumerate}
\end{proposition}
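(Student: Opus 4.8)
The plan is to prove the cycle of equivalences by handling the ``easy'' equivalences (i)--(ii) and (iv)--(v)--(vi)--(i) first, and then isolating the one genuinely hard implication, which relates these internal conditions to the explicit congruential criterion (iii) in Table~\ref{tab3}.

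First I would dispose of (i)$\iff$(ii): this is immediate from Definition~\ref{D:pi} together with the remark already made in the text that $\pi(r,s,p)$ is the identity precisely when $\lambda(r,s,p)$ satisfies~\eqref{eq:std}. Next, (i)$\iff$(iv)$\iff$(v)$\iff$(vi): by Theorem~\ref{Prop2}(d), $\lambda_n=r+s-2n+L(n)-R(n)$, while a standard partition has $\lambda_n=r+s+1-2n$, so standardness is equivalent to $L(n)-R(n)=1$ for all $n\in[r]$. Since $L(n)\ge1$ and $R(n)\ge0$ always, $L(n)-R(n)=1$ for all $n$ forces $L(n)=1$ and $R(n)=0$ for all $n$ (one can argue this by induction on $n$, using that $\delta_0=1$ so $L(1)=1$, hence $R(1)=0$, hence $\delta_1=1$, hence $L(2)=1$, and so on); conversely $L(n)=1,R(n)=0$ trivially gives the difference~$1$. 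Finally, the definitions of $L,R$ and Table~\ref{tab2} show directly that $\delta_1=\cdots=\delta_r=1$ is equivalent to ``$L(n)=1$ and $R(n)=0$ for all~$n$'' (if all $\delta_k=1$ on $[r]$ then trivially $L(n)=1$, $R(n)=0$; conversely $\delta_n=\delta_{n-L(n)}=\delta_{n-1}=1$... actually more directly: $R(n)=0$ says $\delta_n=1$). Equivalently, by~\eqref{eq5}, condition~(vi) says $T=[r-1]$, which by Theorem~\ref{prop3}(a) is equivalent to $\pi_T=\pi(r,s,p)=1$, giving another route to (ii)$\iff$(vi).

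The hard part will be (iii)$\iff$(vi) (equivalently (iii)$\iff$(i)): showing that the divisibility conditions $p\nmid D_n(r,s)$ for all $n\in[r-1]$ are captured exactly by the congruence conditions in Table~\ref{tab3}. My plan here is to induct on $m=\lceil\log_p r\rceil$. The base cases $r=1$ (trivial) and $2\le r\le p$ (here $m=1$, $p^{m-1}=1$, and one uses Renaud's Theorem~1 or the formula~\eqref{eq1} for $D_n(r,s)$ directly with Kummer/Lucas-type arguments on the binomial coefficients mod~$p$ to extract the condition $(s-r+1)_{\bmod p}\le p+2-2r$; the special line $p=2,r=3$ is checked by hand via Lemma~\ref{pismall}) form line~1--3 of the table. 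For the inductive step $r>p$ (so $m\ge2$) with $p$ odd, the key is a reduction result of Barry~\cite{B} (or the periodicity/recursion machinery of~\cite{GPX1}, e.g.\ Proposition~\ref{T:s1} and Lemma~\ref{green}) that relates standardness of $(r,s,p)$ to standardness of a triple with strictly smaller parameters — stripping off the top $p$-power layer reduces $(r,s)$ to roughly $(a,b)$ with $a=r_{\bmod p^{m-1}}$, $b=s_{\bmod p^{m-1}}$ — and the ``carry'' data $i=\lfloor r/p^{m-1}\rfloor$ and $j=\lfloor (s-r+1)_{\bmod p^m}/p^{m-1}\rfloor$, with the constraint $2i+j\le p-1$ governing when no $p$ divides the relevant determinant across layers, and $a-h,b-h\in\{0,1\}$ (i.e.\ $a,b\in\{h,h+1\}$ with $h=(p^{m-1}-1)/2$) ensuring the residual small-parameter triple is itself standard. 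I expect the bookkeeping matching these four quantities to the vanishing of $\delta_n$ to be the main obstacle: one must carefully track, via Kummer's theorem on carries in base~$p$ applied to the product formula~\eqref{eq1}, exactly which ranges of $n$ can produce a factor divisible by~$p$, and confirm that the stated inequalities are simultaneously necessary and sufficient. Since Theorem~\ref{newBarry-main} is stated as following immediately from this proposition, once (iii)$\iff$(vi) is established together with the easy equivalences above, the proposition — and hence Theorem~\ref{newBarry-main} — is complete.
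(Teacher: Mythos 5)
Your treatment of the equivalences (i)$\iff$(ii) and (i)$\iff$(iv)$\iff$(v)$\iff$(vi) is correct and is essentially the argument the paper gives: Theorem~\ref{Prop2}(c),(d) reduce standardness to $L(n)-R(n)=1$ for all $n$, and the induction $L(1)=1\Rightarrow R(1)=0\Rightarrow\delta_1=1\Rightarrow L(2)=1\Rightarrow\cdots$ is exactly how the paper forces $L\equiv1$, $R\equiv0$; your alternative route via $T=[r-1]$ and Theorem~\ref{prop3}(a) is also sound.

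The genuine gap is in the equivalence with (iii). You propose to prove (iii)$\iff$(vi) from scratch, by inducting on $m$ and running Kummer/Lucas-type carry arguments on the product formula \eqref{eq1} for $D_n(r,s)$ to decide exactly when some $\delta_n$ vanishes. That is not what the paper does, and it is the part of your plan that is not actually carried out: you explicitly defer the ``bookkeeping matching these four quantities to the vanishing of $\delta_n$'' as the main obstacle, and no argument is supplied for why the inequalities $a-h,b-h\in\{0,1\}$ and $2i+j\le p-1$ are precisely the ones that emerge from the carry analysis. The paper sidesteps this entirely: it invokes Barry's Theorems~1, 2 and 3 of \cite{B}, which already give complete necessary and sufficient conditions for $\lambda(r,s,p)$ to be standard (phrased as membership of a pair $(r,t)$ in certain explicit sets), and the only new work is the purely arithmetic translation of Barry's conditions into the congruences of Table~\ref{tab3}, done in Lemmas~\ref{lem11}, \ref{lem12} and \ref{lem-cong}. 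So the two routes differ substantially: yours would, if completed, give an independent proof of Barry's classification, but as written the core of (iii)$\iff$(vi) is a sketch rather than a proof, and a direct Kummer-style analysis of \eqref{eq1} across all $n\in[r-1]$ for $r>p$ is a significant undertaking that cannot be waved through. To close the gap you should either carry out that analysis in full, or do as the paper does and reduce to Barry's published criteria, proving only the elementary equivalence between his conditions and those of Definition~\ref{stdtriple}.
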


The proof of Proposition~\ref{newBarry} needs the following two lemmas.
Recall that, for an integer $y$
and prime power $q$, $y_{\bmod q}$ denotes the integer in $[0, q-1]$ such that 
$y\equiv y_{\bmod q}\pmod{q}$.

\begin{lemma}\label{lem11}
Let $p$ be an odd prime, and let $r, s$ be integers such that
$r\ge 2$ and $s\geq2$. Let $t$ be the unique integer in the
interval $[r,p+r-1]$ such that $t_{\bmod p} = s_{\bmod p}$, and let
\[
  S=\{(k,d)\mid2\le k\le d\le p+1-k\}\cup\{(k,p+k-1)\mid2\le k\le(p+1)/2\}.
\]
Then $(r,t)\in S$ if and only if $(s-r+1)_{\bmod p} + 2r\le p+2$.
\end{lemma}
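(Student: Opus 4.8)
The plan is to eliminate $s$ in favour of $t$ and then reduce the biconditional to a comparison of elementary integer inequalities, splitting into two cases according to whether $t$ sits at the top of its range. First I would note that $[r,p+r-1]$ is a set of $p$ consecutive integers, so $t-r$ runs over $\{0,1,\dots,p-1\}$ and $t$ is the unique lift of $s_{\bmod p}$ into that interval. The crucial point is that $(s-r+1)_{\bmod p}$ is then determined by $t$: since $s\equiv t\pmod p$ we get $s-r+1\equiv t-r+1\pmod p$ with $t-r+1\in[1,p]$, so $(s-r+1)_{\bmod p}=t-r+1$ when $t\le p+r-2$, and $(s-r+1)_{\bmod p}=0$ when $t=p+r-1$.

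Next I would dispose of the case $t\le p+r-2$. Here $(s-r+1)_{\bmod p}=t-r+1$, so the right-hand condition $(s-r+1)_{\bmod p}+2r\le p+2$ is equivalent to $t\le p+1-r$; since $r\ge2$ gives $p+1-r\le p+r-2$, this is consistent with the case hypothesis and says exactly that $2\le r\le t\le p+1-r$. Because $t\ne p+r-1$ rules out the second family in $S$, this is precisely the assertion that $(r,t)$ lies in the first family $\{(k,d)\mid 2\le k\le d\le p+1-k\}$. Hence in this case $(r,t)\in S$ if and only if the right-hand inequality holds.

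In the remaining case $t=p+r-1$ one has $(s-r+1)_{\bmod p}=0$, so the right-hand condition reads $2r\le p+2$, which, because $p$ is odd, is equivalent to $r\le(p+1)/2$. Meanwhile $(r,p+r-1)$ cannot lie in the first family of $S$ (that would force $p+r-1\le p+1-r$, impossible for $r\ge2$), and it lies in the second family $\{(k,p+k-1)\mid 2\le k\le(p+1)/2\}$ exactly when $2\le r\le(p+1)/2$. So again $(r,t)\in S$ if and only if the right-hand inequality holds, and combining the two cases gives the lemma.

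I expect the argument to be essentially bookkeeping. The one subtlety, and the reason $S$ is a union of two pieces rather than a single block, is the boundary residue $t-r+1=p$, where $(s-r+1)_{\bmod p}$ collapses to $0$ instead of $p$; this is also the only place where the oddness of $p$ is used, namely to rewrite $2r\le p+2$ as $r\le(p+1)/2$.
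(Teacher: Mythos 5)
Your proof is correct and follows essentially the same route as the paper's: both reduce to the observation that $(s-r+1)_{\bmod p}$ equals $t-r+1$ when $t\le p+r-2$ and equals $0$ when $t=p+r-1$, then match the two cases against the two families in $S$, using the oddness of $p$ only to pass from $2r\le p+2$ to $r\le(p+1)/2$. The only difference is organizational (you split by cases on $t$ throughout, whereas the paper argues each implication separately), which does not change the substance.
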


\begin{proof}
Since $t\equiv s\pmod{p}$ and $r\leq t\leq p+r-1$, it follows that
$s-r+1 \equiv t-r+1\pmod{p}$ and $1\leq t-r+1\leq p$. Thus either
\begin{enumerate}[{\rm (i)}]
 \item $s\not\equiv r-1\pmod{p}$ and $(s-r+1)_{\bmod p} = t-r+1$, or
 \item $s \equiv r-1\pmod{p}$, $t=p+r-1$, and $(s-r+1)_{\bmod p} = 0$.
\end{enumerate}

Suppose first that $(r,t)\in S$. It follows from the definition of
$S$ that $2r\le p+1$.
In case (i), we have $(s-r+1)_{\bmod p} + 2r = t+r+1$ and this is at
most $p+2$ since,  in case (i), $(r,t)\in S$ implies that $t\leq p+1-r$.
In case (ii), $(s-r+1)_{\bmod p} + 2r = 2r < p+2$.

Suppose conversely that $(s-r+1)_{\bmod p} + 2r \leq p+2$.
In case (i), the integer $t$ is equal to $(s-r+1)_{\bmod p} + r - 1$
which by assumption is at most $p+1-r$, and hence $2\le r\le t\le p+1-r$.
Thus $(r,t)\in S$.
In case (ii), $2r = (s-r+1)_{\bmod p} + 2r \leq p+2$, and as $p$ is odd,
this gives $2r\leq p+1$, so $2\leq r\leq \frac{p+1}{2}$. Since in
this case $t=p+r-1$ we have $(r,t)\in S$. Thus the lemma is proved.
\end{proof}

\begin{lemma}\label{lem12}
Let $p$ be an odd prime, and let $r, s$ be integers such that
$r > p$ and $s\geq2$. Let $m=\lceil\log_pr\rceil$ (so $m > 1$),
and let $t$ be the unique integer in the interval $[r,p^m+r-1]$
such that $t_{\bmod p^m} = s_{\bmod p^m}$.
Further, let $a = r_{\bmod p^{m-1}}$, $b = s_{\bmod p^{m-1}}$,
and $S=(T_1\setminus T_2)\cup T_3$, where
\begin{align*}
 T_1 &=\left\{\left(ip^{m-1}+\frac{p^{m-1}+\delta}{2},\ jp^{m-1}+\frac{p^{m-1}+\delta'}{2}\right)\right.\\
     &\hskip3cm \big|\ 1\le i\le\frac{p-1}{2},\quad i\le j\le p-i-1,\quad
	\delta,\delta'\in\{1,-1\}  \left.\right\},\\
T_2&=\left\{\left(ip^{m-1}+\frac{p^{m-1}+1}{2},\ ip^{m-1}+\frac{p^{m-1}-1}{2}\right)
\bigm|\ 1\le i\le\frac{p-1}{2}\right\},\\
T_3&=\left\{\left(ip^{m-1}+\frac{p^{m-1}+1}{2},\ ip^{m-1}+\frac{p^{m-1}-1}{2}+p^m\right)
\big|\ 1\le i\le\frac{p-1}{2}\right\},
\end{align*}
  Then, setting $h=(p^{m-1}-1)/2$, a pair $(r,t)$ lies in $S$ if and only if
\begin{equation}\label{eq9}
a-h,b-h\in\{0,1\}
\text{ and \ }\left\lfloor\frac{(s-r+1)_{\bmod p^m}}{p^{m-1}}\right\rfloor+2\left\lfloor\frac{r}{p^{m-1}}\right\rfloor\le p-1.
\end{equation}
\end{lemma}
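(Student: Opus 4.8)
The plan is to translate membership in $S=(T_1\setminus T_2)\cup T_3$ into statements about the base-$p^{m-1}$ expansions of $r$ and $t$, in the spirit of the proof of Lemma~\ref{lem11}. Write $r=ip^{m-1}+a$ with $i=\lfloor r/p^{m-1}\rfloor$ and $a=r_{\bmod p^{m-1}}$, and $t=jp^{m-1}+b$ with $j=\lfloor t/p^{m-1}\rfloor$; here $b=t_{\bmod p^{m-1}}=s_{\bmod p^{m-1}}$ because $p^{m-1}\mid p^m$ and $t\equiv s\pmod{p^m}$. Since $m>1$ and $p$ is odd, $p^{m-1}\ge3$, so $h\ge1$ and the two ``middle residues'' satisfy $0<h\le h+1<p^{m-1}$; hence each element of $T_1$, $T_2$, $T_3$ is determined by its $p^{m-1}$-digits. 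In particular, uniqueness of quotient and remainder shows that $(r,t)\in T_1$ exactly when $a\in\{h,h+1\}$, $b\in\{h,h+1\}$, $1\le i\le(p-1)/2$ and $i\le j\le p-i-1$; that $(r,t)\in T_2$ forces $t=r-1$; and that $(r,t)\in T_3$ forces $a=h+1$ (hence $2i\le p-1$) together with $t=r-1+p^m$.

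Next I split into two cases, as in Lemma~\ref{lem11}, according to whether $s\equiv r-1\pmod{p^m}$. Since $t\in[r,p^m+r-1]$ and $t-r+1\equiv s-r+1\pmod{p^m}$, in Case~(i), $s\not\equiv r-1\pmod{p^m}$, we have $(s-r+1)_{\bmod p^m}=t-r+1\in[1,p^m-1]$, so $r\le t\le r+p^m-2$; thus $t\ne r-1$ and $t\ne r-1+p^m$, whence $(r,t)\notin T_2\cup T_3$ and $(r,t)\in S\iff(r,t)\in T_1$. In Case~(ii), $s\equiv r-1\pmod{p^m}$, we have $(s-r+1)_{\bmod p^m}=0$ and $t=r-1+p^m>p^m$, while every pair in $T_1\cup T_2$ has second coordinate $<p^m$ (using $i\ge1$, $j\le p-2$ for $T_1$, and $i\le(p-1)/2$ for $T_2$); hence $(r,t)\in S\iff(r,t)\in T_3$.

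The crux is matching the digit data to the quantities in \eqref{eq9}. Writing $t-r+1=(j-i)p^{m-1}+(b-a+1)$ and noting that, once $a,b\in\{h,h+1\}$, the ``remainder'' $b-a+1$ lies in $\{0,1,2\}\subseteq[0,p^{m-1})$, we get $\lfloor(s-r+1)_{\bmod p^m}/p^{m-1}\rfloor=j-i$ in Case~(i). Also $i=\lfloor r/p^{m-1}\rfloor\ge1$ since $p^{m-1}<r$ (as $m=\lceil\log_p r\rceil$), and $j\ge i$ since $t\ge r$. Therefore the inequality in \eqref{eq9} becomes $(j-i)+2i=i+j\le p-1$, i.e.\ $j\le p-i-1$, and (using $j\ge i$) this also forces $2i\le p-1$; combined with $a\in\{h,h+1\}$ and $b\in\{h,h+1\}$, we conclude that \eqref{eq9} holds if and only if $(r,t)\in T_1$, which settles Case~(i). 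In Case~(ii) one has $(s-r+1)_{\bmod p^m}=0$ and $b=(r-1)_{\bmod p^{m-1}}$, which equals $a-1$ when $a\ge1$ and equals $p^{m-1}-1=2h$ when $a=0$; since $h\ge1$, the requirement $a-h,b-h\in\{0,1\}$ can hold only with $a=h+1$ (and then $b=h$), so \eqref{eq9} reduces to ``$a=h+1$ and $2i\le p-1$'', which is precisely the condition for $(r,t)\in T_3$.

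I expect the main obstacle to be Case~(ii): one must treat the relation $b=(r-1)_{\bmod p^{m-1}}$ carefully, including the wraparound when $a=0$, and invoke $h\ge1$ to discard the $a=0$ possibility; one must also confirm that a pair with second coordinate exceeding $p^m$ cannot lie in $T_1$ (so that removing $T_2$ is harmless in passing to $S$). Each of these points is short once set up, but the bookkeeping is error-prone; the remainder is routine base-$p^{m-1}$ digit-chasing.
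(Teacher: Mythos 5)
Your proposal is correct and follows essentially the same route as the paper's proof: both arguments reduce everything to the base-$p^{m-1}$ data $i=\lfloor r/p^{m-1}\rfloor$, $j=\lfloor t/p^{m-1}\rfloor$, $a$, $b$, show the floor quantity equals $j-i$ when $a,b\in\{h,h+1\}$, and treat the wraparound case $t=r-1+p^m$ (equivalently $j-i=p$) separately as the $T_3$ case. The only cosmetic difference is organizational: the paper discards $T_2$ at the outset from $t\ge r$ and splits the converse on $j-i<p$ versus $j-i=p$, whereas you split on whether $s\equiv r-1\pmod{p^m}$; these are the same computation.
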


\begin{proof}
From the definition of $t$ we have $r\le t$ and so $(r,t)\notin T_2$. Thus, to prove the lemma,
it suffices to prove that $(r,t)\in T_1\cup T_3$ if and only if~\eqref{eq9} holds.

Suppose first that $(r,t)\in T_1\cup T_3$. Then from the definitions of the
sets $T_1$ and $T_3$, we have $a-h,b-h\in\{0,1\}$.
Note that, since $t_{\bmod p^m}=s_{\bmod p^m}$, we also have
$t_{\bmod p^{m-1}}=s_{\bmod p^{m-1}}=b$.
If $(r,t)\in T_1$, then $i:=\lfloor r/p^{m-1}\rfloor$ and
$j:=\lfloor t/p^{m-1}\rfloor$ are such that $1\leq i\leq j\le p-i-1$, and
$t-r+1=(j-i)p^{m-1}+b-a+1$, with $0\le b-a+1\le2$.
By the definition of $t$, $1\le t-r+1\le p^m$. Moreover $j-i\le p-2i-1\le p-3$ and hence $t-r+1<p^m$. It follows that
$(s-r+1)_{\bmod p^m}=(t-r+1)_{\bmod p^m}=t-r+1$.
Thus
\[
  \left\lfloor\frac{(s-r+1)_{\bmod p^m}}{p^{m-1}}\right\rfloor+2\left\lfloor\frac{r}{p^{m-1}}\right\rfloor=
  \left\lfloor\frac{t-r+1}{p^{m-1}}\right\rfloor+2i=(j-i)+2i\le p-1.
\]
If $(r,t)\in T_3$, then $i:=\lfloor r/p^{m-1}\rfloor$ satisfies $1\leq i\le(p-1)/2$,  and we have  $t-r+1=p^m$, whence
\[
  \left\lfloor\frac{(s-r+1)_{\bmod p^m}}{p^{m-1}}\right\rfloor+2\left\lfloor\frac{r}{p^{m-1}}\right\rfloor=
  \left\lfloor\frac{(t-r+1)_{\bmod p^m}}{p^{m-1}}\right\rfloor+2i=2i\le p-1.
\]

Conversely suppose  that \eqref{eq9} holds. By the definitions of $a$ and $b$, and since $t\equiv s\pmod{p^m}$,
we have $r=ip^{m-1}+a$ and $t=jp^{m-1}+b$, for some $i, j$. Then since $p^{m-1}<r\le t\le p^m+r-1$, we have $1\le i\le j$
and $j-i\le p$. If $j-i<p$, then
\[
  \left\lfloor\frac{(s-r+1)_{\bmod p^m}}{p^{m-1}}\right\rfloor+2\left\lfloor\frac{r}{p^{m-1}}\right\rfloor=
  \left\lfloor\frac{(t-r+1)_{\bmod p^m}}{p^{m-1}}\right\rfloor+2i=(j-i)+2i=j+i,
\]
and by \eqref{eq9} the pair $(r,t)\in T_1$. On the other hand, if $j-i=p$, then the condition
$t\le p^m+r-1$ forces $a=(p^{m-1}+1)/2$ and $b=(p^{m-1}-1)/2$, and hence the equality
\[
  \left\lfloor\frac{(s-r+1)_{\bmod p^m}}{p^{m-1}}\right\rfloor+2\left\lfloor\frac{r}{p^{m-1}}\right\rfloor=
  \left\lfloor\frac{(t-r+1)_{\bmod p^m}}{p^{m-1}}\right\rfloor+2i=2i.
\]
In conjunction with \eqref{eq9} this shows that $(r,t)\in T_3$. This completes the proof.
\end{proof}

We note the following arithmetic fact
(whose proof is straightforward and is omitted).

\begin{lemma}\label{lem-cong}
 Let $r,s$ be integers and $p$ an odd prime such that $2\leq r\leq s$ and $r\leq (p+1)/2$.
 Then the following are equivalent.
 \begin{enumerate}[{\rm (a)}]
  \item $(s-r+1)_{\bmod p} \leq p+2-2r$.
  \item Either \textup{(i)} $(s-r)_{\bmod p} \leq p+1-2r$ or
               \textup{(ii)} $(s-r)_{\bmod p} = p-1$.
 \end{enumerate}
\end{lemma}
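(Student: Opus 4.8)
The plan is to prove the equivalence by unwinding both sides in terms of the residue $u := (s-r)_{\bmod p} \in [0,p-1]$. Note that $(s-r+1)_{\bmod p}$ equals $u+1$ when $u \le p-2$, and equals $0$ when $u = p-1$; this dichotomy is exactly why condition (b) splits into two cases, so the structure of the statement is forced. First I would treat the case $u = p-1$ on its own: here $(s-r+1)_{\bmod p} = 0 \le p+2-2r$ holds automatically (since $r \le (p+1)/2$ gives $p+2-2r \ge 1 > 0$), so (a) holds, and (b)(ii) holds by definition; thus (a) and (b) are both true and the equivalence is trivial in this case.

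Next I would handle the main case $0 \le u \le p-2$, where $(s-r+1)_{\bmod p} = u+1$ and $(s-r)_{\bmod p} = u$, and where (b)(ii) fails, so (b) reduces to (b)(i). Here (a) reads $u + 1 \le p+2-2r$, i.e.\ $u \le p+1-2r$, while (b)(i) reads $u \le p+1-2r$ — literally the same inequality. So in this case (a) $\Leftrightarrow$ (b)(i) $\Leftrightarrow$ (b). Combining the two cases gives the full equivalence. The only point needing a word of care is that the hypothesis $r \le (p+1)/2$ (equivalently $2r \le p+1$, so $p+1-2r \ge 0$) is what guarantees the bound $p+2-2r$ is nonnegative and hence that the case $u=p-1$ cannot accidentally satisfy $u \le p+1-2r$, keeping the two branches of (b) genuinely disjoint in behaviour; but the equivalence as stated (an ``either/or'') does not actually require disjointness, so even this is not essential.

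There is no real obstacle here — the lemma is a routine residue computation, which is doubtless why the paper states it with the proof omitted. The only thing to get right is the off-by-one bookkeeping between $(s-r)_{\bmod p}$ and $(s-r+1)_{\bmod p}$ at the wraparound point $u = p-1$, and the plan above isolates that point as a separate trivial case precisely to make the bookkeeping transparent.
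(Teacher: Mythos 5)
Your proof is correct, and since the paper explicitly omits the proof of this lemma as "straightforward," your routine case analysis on the residue $u=(s-r)_{\bmod p}$ (splitting off the wraparound case $u=p-1$, where the hypothesis $r\leq(p+1)/2$ ensures $p+2-2r\geq 1$) is exactly the intended argument. No gaps.
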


Now we prove Proposition~\ref{newBarry} (and hence Theorem~\ref{newBarry-main}),
using Barry's results and the lemmas above.

\begin{proof}[Proof of Proposition~\ref{newBarry}.]
Recall the definition of a standard partition in \eqref{eq:std}. By
Theorem~\ref{Prop2}(d) it follows that $\lambda(r,s,p)$ is a
standard partition if and only if
\begin{equation}\label{ELR1}
  L(n)=R(n)+1\qquad\textup{for $1\le n\le r$.}
\end{equation}
However,~\eqref{ELR1} is equivalent to $\pi(r,s,p)$ being the
identity permutation, by Theorem~\ref{Prop2}(c). Thus conditions (i), (ii), and \eqref{ELR1}
are all equivalent.

We next prove that~\eqref{ELR1} implies (iv)--(vi).
So assume that \eqref{ELR1} holds. It follows from the definition of
$L(n)$ that $L(1)=1$, and hence, by~\eqref{ELR1}, also
$R(1)=0$. This means that $\delta_1=1$ by the definition of $R(1)$.
Suppose $n$ is such that $1\le n<r$ and $L(n)=\delta_n=1$ and $R(n)=0$.
(We have proved this for $n=1$.) Then
$L(n+1)=1$ by the definition of $L$, and by~\eqref{ELR1} we have
$R(n+1)=0$, whence $\delta_{n+1}=1$.
Thus by induction it follows that, if \eqref{ELR1} holds, then all of (iv)--(vi)
hold.

By the definitions of $L$ and $R$, condition (vi) implies both (iv) and (v).
Also, since $\delta_0=\delta_r=1$, condition (iv) implies (vi), and condition (v) implies (vi).
Thus conditions (iv)--(vi) are pairwise equivalent.  Moreover, the equivalent conditions
(iv) and (v), together with Theorem~\ref{Prop2}(c) imply that $\pi(r,s,p)=1$.
We have now proved the equivalence of conditions (i), (ii), (iv), (v) and (vi).

Finally we apply Barry's results and Lemma~\ref{lem12} to show that conditions (i) and (iii) are equivalent.
If $r=1$ then the only possibility for the partition is $\lambda(1,s,p)= (s)$, which is standard, so
both (i) and (iii) hold. Assume now that $2\leq r\leq s$. If $p=2$ then, by Barry's result \cite{B}*{Theorem 1},
$\lambda(r,s,2)$ is standard if and only if Definition~\ref{stdtriple} (line 2 or 3 of Table~\ref{tab3}) holds.
Assume now that $p$ is odd, and let $m=\lceil\log_p(r)\rceil$.

Suppose first that $m=1$, that is,  $r\leq p$.
Then by \cite{B}*{Theorem 2}, $\lambda(r,s,p)$ is standard if and only if $r\leq\frac{p+1}{2}$,
and either $(s-r)_{\bmod p} \leq p+1-2r$ or $(s-r)_{\bmod p}=p-1$. By Lemma~\ref{lem-cong}, these conditions
hold if and only if $(s-r+1)_{\bmod p} \leq p+2-2r$ (the condition in line 4 of Table~\ref{tab3}).  Thus
$\lambda(r,s,p)$ is standard if and only if $(r,s,p)$ is standard.

Finally suppose that $m>1$, so $p < r\leq s$, and let $t$
be the unique integer in the interval $[r,p^m+r-1]$
such that $t_{\bmod p^m} = s_{\bmod p^m}$.  Then, by \cite{B}*{Theorem 3},
$\lambda(r,s,p)$ is standard if and only if $(r,t)$ lies in the set $S$ defined in Lemma~\ref{lem12}.
This in turn holds (applying Lemma~\ref{lem12}) if and only if line 5 of Table~\ref{tab3} holds.
Thus in this case also $\lambda(r,s,p)$ is standard if and only if $(r,s,p)$ is standard.
Therefore in all cases we have proved that conditions (i) and (iii) are equivalent.
\end{proof}

\section{The groups \texorpdfstring{$G(r,p)$}{}}\label{sec2}

In this section we prove Theorem~\ref{T:wreath} which determines the structure of the subgroups
\[
  G(r,p)=\langle\pi(r,s,p)\mid r\le s\rangle
\]
of $\Sy_r$.
Computation played a key role in establishing this theorem.
The \Magma\ code available at~\cite{Gl} was useful for both conjecturing the
truth of Theorem~\ref{T:wreath}, and for discovering the key identity
of Lemma~\ref{L9} from which we constructed our proof.
First we show that
$G(r,p)$ has a generating set of size less than $rp$.

\begin{lemma}\label{cor4}
  If $p$ is a prime and $2\leq r\le p^m$, then
  \[G(r,p)=\langle\pi(r,s,p)\mid r\le s\le r+p^m-1\rangle.\]
\end{lemma}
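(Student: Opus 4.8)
The proof is essentially immediate from the periodicity property recorded in Proposition~\ref{lem6}(b), so I will simply show that the two generating sets coincide as subsets of $\Sy_r$; the equality of the groups they generate then follows at once.

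The key point is that, since $2\le r\le p^m$, Proposition~\ref{lem6}(b) gives $\pi(r,s,p)=\pi(r,s+kp^m,p)$ for all $s\ge r$ and all $k\ge0$; in other words $\pi(r,s,p)$ depends only on the residue $s_{\bmod p^m}$. Now the $p^m$ consecutive integers $r,r+1,\dots,r+p^m-1$ form a complete residue system modulo $p^m$, so for any $s\ge r$ there is a unique integer $s'$ with $r\le s'\le r+p^m-1$ and $s'\equiv s\pmod{p^m}$; write $s=s'+kp^m$ with $k\ge0$. Applying Proposition~\ref{lem6}(b) with $s'$ in place of $s$ (this is legitimate because $r\le s'$ and $r\le p^m$) yields
\[
  \pi(r,s,p)=\pi(r,s'+kp^m,p)=\pi(r,s',p).
\]
Hence every generator $\pi(r,s,p)$ with $s\ge r$ already occurs among the $\pi(r,s',p)$ with $r\le s'\le r+p^m-1$, and conversely the latter are among the former, so the two sets are equal. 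Taking the subgroups of $\Sy_r$ they generate gives $G(r,p)=\langle\pi(r,s,p)\mid r\le s\le r+p^m-1\rangle$, as required.

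There is no genuine obstacle here: the only thing to watch is that the hypotheses $r\le s'$ and $r\le p^m$ of Proposition~\ref{lem6}(b) are met, which they are by construction and by assumption. I would also note, to justify the informal remark preceding the lemma, that when $m$ is taken minimal, i.e.\ $m=\lceil\log_p r\rceil$, we have $p^{m-1}<r$, so the exhibited generating set has size $p^m=p\cdot p^{m-1}<pr$.
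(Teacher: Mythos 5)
Your proof is correct and follows essentially the same route as the paper: both reduce an arbitrary $s\ge r$ to its unique representative $s'$ in the window $[r,r+p^m-1]$ and invoke the periodicity $\pi(r,s,p)=\pi(r,s+kp^m,p)$ of Proposition~\ref{lem6}(b). Your additional remark on the size of the generating set matches the paper's informal comment preceding the lemma.
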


\begin{proof}
If $s' > r+p^m-1$ then there exists a positive integer $k$ such that 
$s:=s'-kp^m$ satisfies $r\leq s\leq r+p^m-1$. By Proposition~\ref{lem6}(b), 
$\pi(r,s,p) =\pi(r,s',p)$.
\end{proof}

\begin{conjecture}
  The generating set $\Sigma:=\{\pi(r,s,p): r\le s\le r+p^m-1\}$ for $G(r,p)$
  has at most $2r$ distinct elements for each prime $p$, and
  $\lim_{r\to\infty}|\Sigma|/(2r)=1$.
\end{conjecture}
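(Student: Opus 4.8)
The plan is to turn the counting of $\Sigma$ into combinatorics of subsets of $[r-1]$ and then exploit the recursions and the wreath decomposition already in hand. By Lemma~\ref{cor4} we may take $s$ to run over $r\le s\le r+p^m-1$, so it suffices to let the residue $s_1:=s\bmod p^m$ run over $\{0,1,\dots,p^m-1\}$ and count distinct values of $\pi(r,s,p)$. By Theorem~\ref{Prop2}(b), $\pi(r,s,p)=\pi_{T(s)}$ where $T(s)=\{i\in[r-1]\mid\delta_i(r,s,p)=1\}=\{i\in[r-1]\mid p\nmid D_i(r,s)\}$, and distinct subsets give distinct permutations; thus $|\Sigma|$ is exactly the number of distinct subsets $T(s)\subseteq[r-1]$, and the target is to bound this by $2r$ and to show it is asymptotically $2r$.

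The organizing step is to split the residues into $\{0\}$, the \emph{low band} $[1,r-1]$, the \emph{middle band} $[r,p^m-r]$ (possibly empty), and the \emph{high band} $[p^m-r+1,p^m-1]$. For $s_1=0$, Proposition~\ref{prop1} (Case~0) gives $\pi=\Rev(1,r)$. For $s_1$ in the low band, Proposition~\ref{T:s1}(a) gives $\pi(r,s,p)=\pi(s_1,r,p)\circ\Rev(s_1+1,r)$; since the $\Rev$-factor is pinned down by $s_1$, this band contributes at most $r-1$ permutations. For $s_1$ in the high band, Proposition~\ref{T:s1}(b) (equivalently the $\Rev(1,r)$-duality of Proposition~\ref{lem6}(c)) identifies $\pi(r,s,p)$ with $\Rev(1,r-(p^m-s_1))\circ\pi(p^m-s_1,r,p)^{\Rev(1,r)}$, essentially the $\Rev(1,r)$-conjugate of a low-band element, contributing at most $r-1$ more. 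If the middle band always contributed only the identity we would obtain $|\Sigma|\le 1+(r-1)+1+(r-1)=2r$; this is precisely what happens when $r\le p$, by the standardness criterion of Theorem~\ref{newBarry-main} (line~2 of Table~\ref{tab3}).

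The main obstacle is the middle band when $r>p$, where it is large and does \emph{not} collapse to the identity. The plan is to control it through $G(r,p)=\Sy_a\wr\D_b$ (Theorem~\ref{T:wreath}), with $r=ab$ and $b=r_p$. Lemma~\ref{lem8} shows $n^{\pi(r,s,p)}\equiv s+1-n\pmod{b}$, so the image of $\pi(r,s,p)$ in the top group $\D_b$ is the reflection determined by $s\bmod b$---at most $b$ possibilities. For the base-group component in $\Sy_a^{\,b}$, the $p^\ell$-multiple Proposition~\ref{prop4}, together with Renaud's algorithm as used in Propositions~\ref{T:s1} and~\ref{prop5}, should reduce the relevant data to Norman permutations $\pi(a,\cdot,p)$ of the $p'$-part $a$; the expectation is that each fixed reflection in $\D_b$ has at most $2a$ compatible base components, yielding $|\Sigma|\le b\cdot 2a=2r$. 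Making this precise---pinning down exactly which tuples of $\Sy_a^{\,b}$ occur and ruling out more than $2a$ base components per $\D_b$-class---is the hard part, and I expect it to require a careful analysis of the set $\{i\mid p\nmid D_i(r,s)\}$ via Kummer's theorem on base-$p$ carries, or via the module-theoretic recursions of~\cite{GPX2}. A natural first milestone is the clean \emph{unconditional} upper bound $|\Sigma|\le 2r$, leaving the asymptotic aside.

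Finally, for $|\Sigma|/(2r)\to1$ one needs a matching lower bound: that for all but $o(r)$ of the residues the low- and high-band permutations (and their boundary interactions) are pairwise distinct. Using the explicit descriptions in Propositions~\ref{prop1} and~\ref{prop5} and the injectivity of $T\mapsto\pi_T$, one would show the subsets $T(s_1)$ for $s_1\in[1,r-1]$ are distinct for all but $o(r)$ values, and that the low-band and high-band families share only $o(r)$ elements. I expect this genericity argument, rather than the upper bound, to be where the real work lies; note also that the bound can be strict (e.g.\ when $r=p^e$ is a $p$-power, $G(r,p)=\D_r$ and $|\Sigma|=r$, so the ratio is $\tfrac12$), so the limiting statement must be understood as holding along $r$ for which the $p'$-part dominates---another point the argument must make explicit.
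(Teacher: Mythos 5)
There is nothing in the paper to compare your attempt against: the statement you were given is labelled a \emph{Conjecture}, and the authors offer no proof of it (they only prove the finiteness statement of Lemma~\ref{cor4}, namely that $\Sigma$ generates $G(r,p)$). So the relevant question is whether your argument actually closes the conjecture, and it does not. Your write-up is a plan rather than a proof: the two decisive steps are explicitly deferred. For the upper bound $|\Sigma|\le 2r$ you reduce to the ``middle band'' when $r>p$ and then say the analysis of $\{i\in[r-1]\mid p\nmid D_i(r,s)\}$ via Kummer's theorem or the recursions of~\cite{GPX2} ``is the hard part''; the claim that each reflection class in $\D_b$ admits at most $2a$ compatible base components is exactly the content of the conjecture in the case $r>p$ and is nowhere established. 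Note also that Lemma~\ref{lem8} only pins down the image of $\pi(r,s,p)$ in the top group, so the count $b\cdot 2a$ is an unproved target, not a consequence of Theorem~\ref{T:wreath}. For the asymptotic $|\Sigma|/(2r)\to 1$ you likewise state that the ``genericity argument \dots is where the real work lies.'' A proof cannot consist of identifying where the work lies.

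One genuinely useful observation you make should be pushed further rather than parked in a parenthesis: when $r=p^e$ one has $p^m=r$, so $s$ ranges over only $r$ values and $|\Sigma|\le r$, giving $|\Sigma|/(2r)\le\tfrac12$ along an infinite sequence of $r$. Read literally, this shows the limit in the conjecture cannot equal $1$ over all $r\to\infty$; at best the statement holds along $r$ whose $p'$-part dominates, or with $\lim$ replaced by $\limsup$. Flagging that the conjecture needs this reinterpretation is a legitimate contribution, but it is a remark about the statement, not a proof of it. If you want a tractable first target, prove the unconditional bound $|\Sigma|\le 2r$ for $r\le p$ using Theorem~\ref{newBarry-main} and Proposition~\ref{T:s1}, where your band decomposition can actually be carried out in full.
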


By convention $G(1,p)=\D_1=\Sy_1$, so we assume from now on that $r\geq2$.  
We show that $G(r,p)$ preserves a
system of imprimitivity on $\Omega := [r]$, namely
\[
  {\mathcal B}=\{\Omega_1,\dots,\Omega_b\}
  \quad\textup{where}\quad
  \Omega_j=\{n\in[r]\mid n\equiv j\pmod{b}\}\quad\textup{and}\quad |\Omega_j|=a,
\]
where $a$ is the $p'$-part and $b$ is the $p$-part of $r$.
It follows from Lemma~\ref{lem8}, applied with $p^e=b$, 
that ${\mathcal B}$ is a $G(r,p)$-invariant partition of $[r]$.
Moreover, the action of $G(r,p)$ on ${\mathcal B}$ induces an action
$\phi$ of $G(r,p)$ on $[b]$ given by
\begin{equation}\label{E:phi}
  \phi\colon G(r,p)\to\Sy_b\quad\textup{where for $n\in [b]$, }\quad
   n^{\phi(\pi(r,s,p))}=(s-n)_{\bmod b}+1.
\end{equation}

The \emph{dihedral group} $\D_b$ of \emph{degree~$b$} is a subgroup of
the symmetric group $\Sy_b$ and $|\D_b|=2b$ if $b\ge3$. If $b=1,2$, then
$\D_b=\Sy_b$, so $|\D_b|=b$ in these cases.
Warning: some authors write $\D_{2b}$ for the dihedral group of
\emph{order~$2b$} for $b\ge2$.

  Consider $\Sy_a\wr\D_b$ acting
in product action on the grid~\eqref{blocks}. For each $j$, let $\Sym(\Omega_j)\cong \Sy_a$
act on the $j$th column, and let the base group
\[
  (\Sy_a)^b\cong\Sym(\Omega_1)\times\cdots\times\Sym(\Omega_b)
\]
act on the $b$
columns (or blocks). The top group $\D_b$ permutes the blocks $\Omega_1,\dots,\Omega_b$ transitively.
\begin{equation}\label{blocks}
\begin{tabular}{|c|c|c|c|} \hline
  $1$&$2$&$\cdots$&$b$\\ 
  $b+1$&$b+2$&$\cdots$&$2b$\\ 
  $\vdots$&$\vdots$&$\cdots$&$\vdots$\\ 
  $(a-1)b+1$&$(a-1)b+2$&$\cdots$&$ab$\\ \hline
\end{tabular}
\end{equation}

\begin{lemma}\label{thm2}
Let $r$ be a positive integer with $p$-part $b$ and $p'$-part $a$,
and suppose that $\phi$ is defined as in~\eqref{E:phi}.
Then 
\[
  \phi(G(r,p))=G(b,p)=\D_{b},\quad\textup{and\quad $G(r,p)\le\Sy_a\wr\D_b\le\Sy_{ab}$}.
\]
In particular, Theorem~$\ref{T:wreath}$ is proved in the case $a=1$.
\end{lemma}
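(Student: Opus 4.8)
The plan is to make the homomorphism $\phi$ completely explicit on generators, recognise the group it generates as dihedral, identify $G(b,p)$ with the same group, and then read off the wreath‑product containment from the imprimitivity of $G(r,p)$ already established above.

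First I would describe $\phi(\pi(r,s,p))$ concretely. By~\eqref{E:phi}, for each $s\ge r$ the element $\phi(\pi(r,s,p))$ is the permutation $\sigma_s\colon n\mapsto (s-n)_{\bmod b}+1$ of $[b]$. Identifying $[b]$ with $\Z/b\Z$ in the obvious way, $\sigma_s$ is the reflection $x\mapsto (s+1)-x$. As $s$ runs over the integers $\ge r$, its residue modulo $b$ runs over all of $\Z/b\Z$, so the permutations $\sigma_s$ are exactly the $b$ reflections of $\Z/b\Z$. Since $\sigma_s\sigma_{s'}$ is the rotation by $s-s'$, these reflections generate the whole dihedral group, i.e.\ $\langle\sigma_s\mid s\ge r\rangle=\D_b$ (including the cases $b\in\{1,2\}$, where $\D_b=\Sy_b$); hence $\phi(G(r,p))=\D_b$. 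The same computation handles $G(b,p)$: as $b$ is a $p$-power, Lemma~\ref{lem8} with $p^e=b$ gives $n^{\pi(b,s,p)}\equiv s+1-n\pmod{b}$ for all $n\in[b]$, and since $\pi(b,s,p)\in\Sy_b$ and each residue class modulo $b$ meets $[b]$ in a single point, this forces $\pi(b,s,p)=\sigma_s$. Therefore $G(b,p)=\langle\sigma_s\mid s\ge b\rangle=\D_b=\phi(G(r,p))$, which is the first assertion.

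For the containment I would use the imprimitivity recorded just above: by Lemma~\ref{lem8} (with $p^e=b$) the partition $\mathcal{B}=\{\Omega_1,\dots,\Omega_b\}$ of $\Omega=[r]$ into the $b$ residue classes modulo $b$, each of size $a$, is $G(r,p)$-invariant, so $G(r,p)$ is contained in the setwise stabiliser of $\mathcal{B}$ in $\Sy_r$, namely $\Sym(\Omega_1)\wr\Sym(\mathcal{B})\cong\Sy_a\wr\Sy_b\le\Sy_{ab}=\Sy_r$. The composite of this inclusion with projection onto the top group $\Sym(\mathcal{B})$ is exactly $\phi$, whose image we showed is $\D_b$, so $G(r,p)$ lies in the full preimage of $\D_b$, which is $\Sy_a\wr\D_b$. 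Finally, when $a=1$ we have $r=b$ and $\Sy_a\wr\D_b\cong\D_b$, and the above already yields $G(r,p)=G(b,p)=\D_b\cong\Sy_1\wr\D_b\cong G(1,p)\wr G(b,p)$, which is precisely Theorem~\ref{T:wreath} in this case.

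I do not expect a genuine obstacle, since the argument is a chain of identifications. The points needing care are the passage between the index set $[b]$ and $\Z/b\Z$ (so that ``reflection'' and ``rotation'' are literally accurate and the small cases $b=1,2$ are genuinely included), and making the identification of $\phi$ with the top-group projection of the embedding into $\Sy_a\wr\Sy_b$ explicit rather than leaving it implicit.
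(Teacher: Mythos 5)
Your proof is correct and follows essentially the same route as the paper: Lemma~\ref{lem8} gives both the $G(r,p)$-invariance of the block system $\mathcal{B}$ and the exact form of $\pi(b,s,p)$ and $\phi(\pi(r,s,p))$ as the reflections $n\mapsto s+1-n$ of $\Z/b\Z$, which generate $\D_b$ as $s$ varies. You spell out a few steps the paper leaves implicit (that $\pi(b,s,p)$ is literally $\sigma_s$ because each residue class modulo $b$ meets $[b]$ once, and that $\phi$ is the top-group projection of the embedding into the partition stabiliser), but these are elaborations, not a different argument.
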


\begin{proof}
If $b=1$ then  $\phi(G(r,p)=\Sy_1=1$, and  by convention $\D_1=1$, and there is nothing to prove.
So suppose that $b>1$, so that $b\geq p$.
  Consider a regular $b$-gon with vertices numbered $1,2,\dots,b$
  consecutively. Reading modulo $b$, the map
$n\mapsto -n$ is a reflection, and $n\mapsto n+s+1$ is a
rotation of $2\pi (s+1)/n$ about the center of the $b$-gon.
Hence the composite $n\mapsto s-n+1$ is a reflection fixing a vertex. Thus as $s$ varies, the maps
$n\mapsto s-n+1$ generate the dihedral group $\D_b$ of degree~$b$. Therefore
$G(b,p)=\D_{b}$. Hence, as  $s$ varies, the permutations $\phi(\pi(r,s,p))$ defined by~\eqref{E:phi}
generate $\D_{b}$. This implies, in particular, that Theorem~$\ref{T:wreath}$ is proved in the case $a=1$.
\end{proof}


From now on we assume that $a>1$, that is, that $r$ is not a power of $p$. 
The kernel of $\phi$ is a subgroup of
$\Sym(\Omega_1)\times\dots\times\Sym(\Omega_{b})\cong (\Sy_{a})^{b}$.
The \emph{diagonal subgroup} of $(\Sy_a)^b$ is the image of the monomorphism
\begin{equation}\label{E:d}
  d\colon \Sy_a\to(\Sy_a)^{b}\quad\textup{defined by}\quad
  ((i-1)b+j)^{d(\sigma)}=(i^\sigma-1)b+j
\end{equation}
where $\sigma\in\Sy_a$, $i\in[a]$ and $j\in[b]$.

\begin{lemma}\label{thm4}
Let $r$ be a positive integer with $p$-part $b$ and $p'$-part $a>1$,
and suppose that $d$ is the diagonal embedding as in~\eqref{E:d}.
Then $G(r,p)\supseteq d(\Sy_a)$. In particular 
Theorem~$\ref{T:wreath}$ is proved in the case $b=1$.
\end{lemma}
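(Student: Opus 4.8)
The plan is to produce explicitly inside $G(r,p)$ a set of generators of the diagonal subgroup $d(\Sy_a)$. Write $r=ab$ with $b=p^e$ the $p$-part of $r$, and fix $m$ with $r\le p^m$. Since $d$ is a monomorphism, it suffices to exhibit $d(\sigma)\in G(r,p)$ for $\sigma$ ranging over a generating set of $\Sy_a$. For every $a\ge 2$ the group $\Sy_a$ is generated by the $a$-cycle $\rho=(1,2,\dots,a)$ together with the $(a-1)$-cycle $\rho'=(2,3,\dots,a)$: indeed $(\rho')^{-1}\rho=(1,2)$, and $\langle(1,2),\rho\rangle=\Sy_a$ (when $a=2$, $\rho'$ is trivial and $\Sy_2=\langle\rho\rangle$). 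So I only need $d(\rho)\in G(r,p)$, and, when $a\ge 3$, also $d(\rho')\in G(r,p)$.

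Next I would write down the Norman permutations that realise these. By Proposition~\ref{prop1} (Case~0), $\pi(r,p^m,p)=\Rev(1,r)$. By Proposition~\ref{prop5} when $b>1$, and by Proposition~\ref{prop1} (Case~1) when $b=1$, $\pi(r,p^m+b,p)=\Rev(1,b)\,\Rev(b+1,r)$, where $\Rev(1,1)$ is read as the identity. If $a\ge 3$ — equivalently $2b<r$ — then by Proposition~\ref{prop5} when $b>1$, and by Proposition~\ref{prop1} (Case~2, using $p\nmid r$) when $b=1$, $\pi(r,p^m+2b,p)=\Rev(1,b)\,\Rev(b+1,2b)\,\Rev(2b+1,r)$. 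In each case the second parameter is at least $r$, so all three permutations lie in $G(r,p)$.

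The core of the proof is then the pair of identities
\begin{align*}
\Rev(1,r)\cdot\Rev(1,b)\,\Rev(b+1,r)&=d(\rho),\\
\Rev(1,b)\,\Rev(b+1,r)\cdot\Rev(1,b)\,\Rev(b+1,2b)\,\Rev(2b+1,r)&=d(\rho'),
\end{align*}
which I would verify by a direct computation in the coordinates $n=(i-1)b+j$ with $i\in[a]$, $j\in[b]$. For the first identity, composing the three reversals sends $n\mapsto n+b$ for $1\le n\le r-b$ and $n\mapsto n-(a-1)b$ for $r-b+1\le n\le r$; in coordinates this is $(i,j)\mapsto(i+1,j)$ for $i<a$ and $(a,j)\mapsto(1,j)$, that is, $d(\rho)$. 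For the second identity the two copies of $\Rev(1,b)$ cancel, row~$1$ is fixed, and on $\{b+1,\dots,r\}$ the product sends $n\mapsto n+b$ for $b+1\le n\le r-b$ and $n\mapsto n-(a-2)b$ for $r-b+1\le n\le r$, i.e.\ $(i,j)\mapsto(i+1,j)$ for $2\le i<a$ and $(a,j)\mapsto(2,j)$, that is, $d(\rho')$. (When $b=1$ these collapse to $\Rev(1,r)\Rev(2,r)=(1,2,\dots,r)$ and $\Rev(2,r)\Rev(3,r)=(2,3,\dots,r)$, and $d$ is the identity map on $\Sy_r$.) Hence $d(\rho)\in G(r,p)$ always and $d(\rho')\in G(r,p)$ when $a\ge 3$; since $\Sy_a=\langle\rho,\rho'\rangle$ and $d$ is a homomorphism, $d(\Sy_a)\le G(r,p)$ by the first paragraph.

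For the final assertion, if $b=1$ then $a=r$, the map $d$ is the identity on $\Sy_r$, and $\D_b=\D_1$ is trivial, so the inclusion just proved gives $G(r,p)\supseteq\Sy_r$, hence $G(r,p)=\Sy_r=\Sy_a\wr\D_1\cong G(a,p)\wr G(b,p)$, which is Theorem~\ref{T:wreath} in that case. The only real difficulty is combinatorial rather than conceptual: one must identify the correct three Norman permutations (the role of the computer search mentioned in the paper) and then carry out the slightly fiddly bookkeeping — a case split over $\{1,\dots,b\}$, $\{b+1,\dots,r-b\}$, $\{r-b+1,\dots,r\}$ (and the analogue for $\rho'$) — showing that the relevant products of reversals collapse to permutations acting identically on every column of the grid~\eqref{blocks}.
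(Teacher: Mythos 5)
Your proposal is correct and follows essentially the same route as the paper: both identify the three Norman permutations $\pi(r,p^m,p)=\Rev(1,r)$, $\pi(r,p^m+b,p)=\Rev(1,b)\Rev(b+1,r)$ and (for $a\ge3$) $\pi(r,p^m+2b,p)=\Rev(1,b)\Rev(b+1,2b)\Rev(2b+1,r)$, and verify that the products $\pi_0\pi_b$ and $\pi_b\pi_{2b}$ equal $d\bigl((1,2,\dots,a)\bigr)$ and $d\bigl((2,3,\dots,a)\bigr)$, which generate $d(\Sy_a)$. Your handling of the degenerate case $b=1$ via Proposition~\ref{prop1} rather than Proposition~\ref{prop5} (whose hypothesis requires $b>1$) is a small point on which you are, if anything, slightly more careful than the paper.
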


\begin{proof}
 Let $m$ be the least integer such that $r\le p^m$ and $\ell$ be the least integer such that $a\le p^\ell$. Then $m=e+\ell$ where $b=p^e$. Consider the permutations $\pi_0:=\pi(r,p^m,p)$, $\pi_b:=\pi(r,p^m+b,p)$ and $\pi_{2b}:=\pi(r,p^m+2b,p)$ in $G(r,p)$. We have $\pi_0=\Rev(1,r)$  by Proposition~\ref{prop1}, and $\pi_b=\Rev(1,b)\Rev(b+1,r)$ by Proposition~\ref{prop5}. Hence, for each $i\in[a]$ and $j\in[b]$ we have
\begin{align*}
((i-1)b+j)^{\pi_0\pi_b}&=(r+1-((i-1)b+j))^{\pi_b}\\
&=((a-i)b+b+1-j)^{\pi_b}\\
&=\begin{cases}
  b+1+r-((a-i)b+b+1-j)\quad&\textup{if $1\le i\le a-1$,}\\
  1+b-((a-i)b+b+1-j)\quad&\textup{if $i=a$,}\\
  \end{cases}\\
&=\begin{cases}
  ib+j\hskip53mm&\textup{if $1\le i\le a-1$,}\\
  j\quad&\textup{if $i=a$}.
  \end{cases}
\end{align*}
This means that $\pi_0\pi_b=d(\sigma_1)$, where $\sigma_1=(1,2,\dots,a-1,a)\in\Sy_a$. 
If $a=2$, then the result is proved since $\langle\sigma_1\rangle=\Sy_a$ and hence 
$\langle d(\sigma_1)\rangle =d(\Sy_a)$. Assume now that $a\ge3$. 
By Proposition~\ref{prop1}, $\pi(a,p^\ell+2,p)=\Rev(3,a)$  since $a_{\bmod p}\ne 0$. 
Then by Proposition~\ref{prop4}, 
\[
  \pi_{2b}=\Rev(1,b)\Rev(b+1,2b)\Rev(2b+1,r).
\]
Hence for each $i\in[a]$ and $j\in[b]$, we have
\begin{align*}
((i-1)b+j)^{\pi_b\pi_{2b}}
  &=\begin{cases}
  (1+b-((i-1)b+j))^{\pi_{2b}}\quad&\textup{if $i=1$,}\\
  (b+1+r-((i-1)b+j))^{\pi_{2b}}\hskip21mm&\textup{if $2\le i\le a$,}\\
  \end{cases}\\
  &=\begin{cases}
  (b+1-j)^{\pi_{2b}}\quad&\textup{if $i=1$,}\\
  ((a+1-i)b+b+1-j)^{\pi_{2b}}\hskip24.2mm&\textup{if $2\le i\le a$,}\\
  \end{cases}\\
  &=\begin{cases}
  1+b-(b+1-j)\quad&\textup{if $i=1$,}\\
  2b+1+r-((a+1-i)b+b+1-j)\quad&\textup{if $2\le i\le a-1$,}\\
  b+1+2b-((a+1-i)b+b+1-j)\quad&\textup{if $i=a$,}\\
  \end{cases}\\
  &=\begin{cases}
  j\quad&\textup{if $i=1$,}\\
  ib+j\hskip62.5mm&\textup{if $2\le i\le a-1$,}\\
  b+j\quad&\textup{if $i=a$},
  \end{cases}
\end{align*}
so $\pi_b\pi_{2b}=d(\sigma_2)$, where $\sigma_2=(2,3,\dots,a-1,a)\in\Sy_a$. Now since
\[
  \langle\sigma_1,\sigma_2\rangle=\langle\,\sigma_1,\sigma_1^{-1}\sigma_2\,\rangle
  =\langle\,(1,2,\dots,a-1,a),(1,2)\,\rangle=\Sy_a,
\]
we conclude that
\[
  G(r,p)\ge\langle\pi_0\pi_b,\pi_b\pi_{2b}\rangle=\langle d(\sigma_1),d(\sigma_2)\rangle=d(\langle\sigma_1,\sigma_2\rangle)=d(\Sy_a).
\]
If $b=1$, then the homomorphism
$\phi\colon G(r,p)\to\Sy_{b}$ given by~\eqref{E:phi} is trivial and so 
$G(r,p)=\ker(\phi)=(\Sy_a)^{b}=\Sy_a=\Sy_r$ as required in Theorem~\ref{T:wreath}.
\end{proof}

%
%
%
%


From now on we assume that both $a>1$ and $b>1$. We show that $G(r,p)$
contains a transposition in the base group of $\Sy_a\wr\D_b$.

\begin{lemma}\label{L9}
Let $p$ be a prime,  let $r$ be a positive integer with $p$-part $b>1$ 
and $p'$-part $a>1$, and suppose that $r\leq p^m$. 
Write $\pi_k=\pi(r,p^m+k,p)$, for $0\leq k < r$.
Then the composition $\pi_1\pi_0\pi_b\pi_{b+1}$ is equal to 
the transposition $(1,b+1) \in   \Sym(\Omega_1)$.
\end{lemma}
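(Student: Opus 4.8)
The plan is to feed the explicit reversal formulas for $\pi_0,\pi_1,\pi_b,\pi_{b+1}$, supplied by Propositions~\ref{prop1} and~\ref{prop5}, into a short permutation computation. Since $b>1$ and $a>1$ we have $2\le r\le p^m$ and $1<b<r$, so both propositions apply: Proposition~\ref{prop1} (Cases~$0$ and~$1$) gives $\pi_0=\Rev(1,r)$ and $\pi_1=\Rev(2,r)$, and Proposition~\ref{prop5} gives $\pi_b=\Rev(1,b)\Rev(b+1,r)$ and $\pi_{b+1}=\Rev(2,b)\Rev(b+2,r)$. Throughout I would use the paper's convention $n^{\alpha\beta}=(n^\alpha)^\beta$.

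First I would compute the two intermediate products. For $\pi_1\pi_0$: the point $1$ is fixed by $\Rev(2,r)$ and then sent to $r$, while for $2\le n\le r$ we get $n^{\Rev(2,r)}=r+2-n\in[2,r]$ and then $(r+2-n)^{\Rev(1,r)}=n-1$; hence $\pi_1\pi_0$ equals the $r$-cycle $\rho\colon n\mapsto n-1$ (with $1\mapsto r$), i.e.\ ``subtract $1$ modulo $r$''. For $\pi_b\pi_{b+1}$: the factors $\Rev(1,b)$ and $\Rev(2,b)$ move only points of $[1,b]$, and $\Rev(b+1,r)$ and $\Rev(b+2,r)$ move only points of $[b+1,r]$, so the product factorises as $\bigl(\Rev(1,b)\Rev(2,b)\bigr)$ acting on $[1,b]$ times $\bigl(\Rev(b+1,r)\Rev(b+2,r)\bigr)$ acting on $[b+1,r]$; an analogous two-reversal computation shows that each factor is the cyclic ``add $1$'' map on its interval, so $\pi_b\pi_{b+1}=\tau:=(1,2,\dots,b)(b+1,b+2,\dots,r)$.

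Finally I would compose $\rho$ and $\tau$: under $\rho$ followed by $\tau$ one has $1\mapsto r\mapsto b+1$ and $b+1\mapsto b\mapsto 1$, whereas any other $n\in[r]$ satisfies $n^\rho=n-1$, with $n-1$ lying in the same one of the intervals $[1,b]$, $[b+1,r]$ as $n$ and $(n-1)^\tau=n$, so $n$ is fixed. Hence $\pi_1\pi_0\pi_b\pi_{b+1}=\rho\tau=(1,b+1)$, and since $1\equiv b+1\equiv 1\pmod{b}$, both moved points lie in the block $\Omega_1$, so this transposition lies in $\Sym(\Omega_1)$, as claimed. The computation is entirely routine; the only care needed is with the endpoint images of each reversal (the images of $1$, $b$, $b+1$, $r$) and with respecting the left-to-right composition convention, which is precisely why I would route the argument through the two auxiliary permutations $\rho$ and $\tau$ rather than expand the fourfold product directly.
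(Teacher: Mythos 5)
Your proposal is correct and follows essentially the same route as the paper: both invoke Propositions~\ref{prop1} and~\ref{prop5} for the reversal decompositions of $\pi_0,\pi_1,\pi_b,\pi_{b+1}$ and then verify the identity by direct computation. Your factoring of the product through the intermediate permutations $\rho=\pi_1\pi_0$ and $\tau=\pi_b\pi_{b+1}$ is just a tidy way of organising the ``direct computation'' the paper leaves to the reader, and the details check out.
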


\begin{proof}
By Proposition~\ref{prop1}, $\pi_0=\Rev(1,r)$ and $\pi_1=\Rev(2,r)$. Also
by Proposition~\ref{prop5}, $\pi_b=\Rev(1,b)\Rev(b+1,r)$, and 
  $\pi_{b+1}=\Rev(2,b)\Rev(b+2,r)$.  
  Direct computation shows that $\pi_1\pi_0\pi_b\pi_{b+1}$ fixes $n$ if $2\le n\le b$
  or $b+2\le n\le r$, and  that $\pi_1\pi_0\pi_b\pi_{b+1}$ interchanges $1$ and $b+1$.
  Thus  $\pi_1\pi_0\pi_b\pi_{b+1}= (1,b+1)$, fixing each block $\Omega_j$ setwise,
  and interchanging the two points $1, b+1$ of $\Omega_1$.
\end{proof}

\begin{proof}[Proof of Theorem~\ref{T:wreath}]
Let $p, r, a, b$ be as in the statement.
Then Theorem~\ref{T:wreath} holds if either $a=1$ or $b=1$ by Lemma~\ref{thm2}
and~\ref{thm4}. Assume now that $a>1$ and $b>1$. Then 
 by Lemma~\ref{L9}, the transposition $(1,b+1)$ lies in $G(r,p)$
 and belongs to   $\Sym(\Omega_1)\le\ker\phi $. By Lemma~\ref{thm4}, the diagonal
  subgroup $D=\textup{diag}((\Sy_a)^b)$ is contained in $G(r,p)$, and hence 
$G(r,p)$ contains $(1,b+1)^D=\{(1,b+1)^g | g\in D\}$. Since $D$ acts on $\Omega_1$ 
as $\Sy_a$, $(1,b+1)^D$ is the set of all transpositions in $\Sym(\Omega_1)$, and in particular,
  $\langle(1,b+1)^D\rangle=\Sym(\Omega_1)$, and is contained in $G(r,p)$.
  It follows from Lemma~\ref{thm2} that each of $\Sym(\Omega_1), \dots, \Sym(\Omega_b)$ 
  lies in $G(r,p)$. Thus $(\Sy_a)^b\le G(r,p)$. Finally, by Lemma~\ref{thm2}, we conclude that 
  $G(r,p)=\Sy_a\wr\D_b$.  
\end{proof}

\noindent\textsc{Acknowledgements.}
We thank Michael Barry for drawing our attention to~\cite{B}.
The first and second authors acknowledge the support of the Australian Research Council Discovery Grant DP160102323. 
The third author's work on this paper was done when he was a research associate at the University of Western Australia supported by the Australian Research Council Discovery Project DP150101066.


\begin{thebibliography}{99}

\bib{A}{article}{
  author={Aitken, A. C.},
  title={The normal form of compound and induced matrices.},
  journal={Proc. London Math. Soc.},
  volume={38},
  year={1934},
  pages={353--376},
}

\bib{B}{article}{
   author={Barry, Michael J. J.},
   title={On a question of Glasby, Praeger, and Xia},
   journal={Comm. Algebra},
   volume={43},
   date={2015},
   number={10},
   pages={4231--4246},
   issn={0092-7872},
}


\bib{Gl}{article}{
   author={Glasby, S. P.},
   journal={\textsc{Magma} 
     code available at \texttt{http://www.maths.uwa.edu.au/$\sim$glasby/research.html}},
}


\bib{GPX2}{article}{
   author={Glasby, S. P.},
   author={Praeger, Cheryl E.},
   author={Xia, Binzhou},
   title={Decomposing modular tensor products: `Jordan partitions', their
   parts and $p$-parts},
   journal={Israel J. Math.},
   volume={209},
   date={2015},
   number={1},
   pages={215--233},
   issn={0021-2172},
}

\bib{GPX1}{article}{
   author={Glasby, S. P.},
   author={Praeger, Cheryl E.},
   author={Xia, Binzhou},
   title={Decomposing modular tensor products, and periodicity of `Jordan
   partitions'},
   journal={J. Algebra},
   volume={450},
   date={2016},
   pages={570--587},
   issn={0021-8693},
}

\bib{Green1962}{article}{
   author={Green, J. A.},
   title={The modular representation algebra of a finite group},
   journal={Illinois J. Math.},
   volume={6},
   date={1962},
   pages={607--619},
   issn={0019-2082},
}

\bib{II2009}{article}{
   author={Iima, Kei-ichiro},
   author={Iwamatsu, Ryo},
   title={On the Jordan decomposition of tensored matrices of Jordan
   canonical forms},
   journal={Math. J. Okayama Univ.},
   volume={51},
   date={2009},
   pages={133--148},
   issn={0030-1566},
}

\bib{rL}{article}{
   author={Lawther, R.},
   title={Jordan block sizes of unipotent elements in exceptional algebraic
   groups},
   journal={Comm. Algebra},
   volume={23},
   date={1995},
   number={11},
   pages={4125--4156},
   issn={0092-7872},
}

\bib{rL2}{article}{
   author={Lawther, R.},
   title={Correction to: ``Jordan block sizes of unipotent elements in
   exceptional algebraic groups'' [Comm. Algebra {\bf 23} (1995), no. 11,
   4125--4156; MR1351124 (96h:20084)]},
   journal={Comm. Algebra},
   volume={26},
   date={1998},
   number={8},
   pages={2709},
   issn={0092-7872},
}

\bib{Norman1995}{article}{
   author={Norman, C. W.},
   title={On the Jordan form of the tensor product over fields of prime
   characteristic},
   journal={Linear and Multilinear Algebra},
   volume={38},
   date={1995},
   number={4},
   pages={351--371},
   issn={0308-1087},
}


\bib{Renaud1979}{article}{
   author={Renaud, J.-C.},
   title={The decomposition of products in the modular representation ring
   of a cyclic group of prime power order},
   journal={J. Algebra},
   volume={58},
   date={1979},
   number={1},
   pages={1--11},
   issn={0021-8693},
}

\bib{Robert1994}{article}{
   author={Roberts, Paul C.},
   title={A computation of local cohomology},
   conference={
      title={Commutative algebra: syzygies, multiplicities, and birational
      algebra },
      address={South Hadley, MA},
      date={1992},
   },
   book={
      series={Contemp. Math.},
      volume={159},
      publisher={Amer. Math. Soc., Providence, RI},
   },
   date={1994},
   pages={351--356},
}


\bib{Srinivasan1964}{article}{
   author={Srinivasan, Bhama},
   title={The modular representation ring of a cyclic $p$-group},
   journal={Proc. London Math. Soc. (3)},
   volume={14},
   date={1964},
   pages={677--688},
   issn={0024-6115},
}


\end{thebibliography}
\end{document}